\newtheorem{thm}{Theorem}[section]
\newtheorem*{thm*}{Theorem}
\newtheorem{lem}[thm]{Lemma}
\newtheorem{cor}[thm]{Corollary}
\newtheorem{prop}[thm]{Proposition}
\theoremstyle{definition}
\newtheorem{defn}[thm]{Definition}
\newtheorem{ex}[thm]{Example}
\newtheorem{notn}[thm]{Notation}
\newtheorem*{notn*}{Notation}
\newtheorem*{hyp*}{Hypothesis}
\newtheorem{rem}[thm]{Remark}
\newtheorem*{rem*}{Remark}
\numberwithin{equation}{section}
\newcommand{\secref}[1]{Section~\textup{\ref{#1}}}
\newcommand{\subsecref}[1]{Subsection~\textup{\ref{#1}}}
\newcommand{\thmref}[1]{Theorem~\textup{\ref{#1}}}
\newcommand{\corref}[1]{Corollary~\textup{\ref{#1}}}
\newcommand{\lemref}[1]{Lemma~\textup{\ref{#1}}}
\newcommand{\propref}[1]{Proposition~\textup{\ref{#1}}}
\newcommand{\defnref}[1]{Definition~\textup{\ref{#1}}}
\newcommand{\remref}[1]{Remark~\textup{\ref{#1}}}
\newcommand{\notnref}[1]{Notation~\textup{\ref{#1}}}
\newcommand{\midtext}[1]{\quad\text{#1}\quad}
\newcommand{\righttext}[1]{\quad\text{#1 }}
\renewcommand{\and}{\midtext{and}}
\renewcommand{\)}{\textup)}
\newcommand{\CC}{\mathcal C}
\newcommand{\DD}{\mathcal D}
\newcommand{\KK}{\mathcal K}
\newcommand{\LL}{\mathcal L}
\renewcommand{\epsilon}{\varepsilon}
\DeclareMathOperator{\aut}{Aut}
\DeclareMathOperator{\ad}{Ad}
\DeclareMathOperator{\mor}{Mor}
\DeclareMathOperator{\isos}{Iso}
\DeclareMathOperator*{\spn}{span}
\DeclareMathOperator*{\clspn}{\overline{\spn}}
\newcommand{\id}{\text{\textup{id}}}
\newcommand{\<}{\langle}
\renewcommand{\>}{\rangle}
\newcommand{\inv}{^{-1}}
\newcommand{\iso}{\overset{\simeq}{\longrightarrow}}
\newcommand{\variso}{\overset{\simeq}{\longrightarrow}}
\renewcommand{\bar}{\overline}
\newcommand{\what}{\widehat}
\newcommand{\wilde}{\widetilde}
\newcommand{\rt}{\textup{rt}}
\newcommand{\smtx}[1]{\left(\begin{smallmatrix} #1
\end{smallmatrix}\right)}
\newcommand{\mtx}[1]{\begin{pmatrix} #1 \end{pmatrix}}
\renewcommand{\:}{\colon}
\newcommand{\nd}{_\mathbf{nd}}
\newcommand{\en}{_\mathbf{en}}
\newcommand{\ou}{_\mathbf{ou}}
\newcommand{\du}{_\mathbf{sc}}
\newcommand{\cs}{\mathbf{C}^*}
\newcommand{\csn}{\cs\nd}
\newcommand{\cse}{\cs\en}
\newcommand{\ac}{\mathbf{Ac}}
\newcommand{\acn}{\ac_{\mathbf{nd}}}
\newcommand{\ace}{\ac_{\mathbf{en}}}
\newcommand{\aco}{\ac_{\mathbf{ou}}}
\newcommand{\wac}{\rt\text{-}{\ac}}
\newcommand{\wacn}{\wac\nd}
\newcommand{\wace}{\wac\en}
\newcommand{\waco}{\wac\ou}
\newcommand{\co}{\mathbf{Co}}
\newcommand{\con}{\co\nd}
\newcommand{\coe}{\co\en}
\newcommand{\coo}{\co\ou}
\newcommand{\wco}{\delta_G\text{-}{\co}}
\newcommand{\wcon}{\wco\nd}
\newcommand{\wcoe}{\wco\en}
\newcommand{\wcoo}{\wco\ou}
\newcommand{\wcod}{\wco\du}
\newcommand{\cp}{\textup{CP}}
\newcommand{\cpn}{\cp\nd}
\newcommand{\cpe}{\cp\en}
\newcommand{\cpo}{\cp\ou}
\newcommand{\wcp}{\wilde{\cp}}
\newcommand{\wcpn}{\wcp\nd}
\newcommand{\wcpe}{\wcp\en}
\newcommand{\wcpo}{\wcp\ou}
\newcommand{\wcpd}{\wcp\du}
\newcommand{\fix}{\textup{Fix}}
\newcommand{\fin}{\fix\nd}
\newcommand{\fie}{\fix\en}
\newcommand{\link}[3]{\mtx{#1&#2\\{*}&#3}}
\newcommand{\slink}[3]{\smtx{#1&#2\\{*}&#3}}
\renewcommand{\phi}{\varphi}
\renewcommand{\subset}{\subseteq}
\begin{document}
\title{Three versions of categorical crossed-product duality}

\author[Kaliszewski]{S.~Kaliszewski}
\address{School of Mathematical and Statistical Sciences
\\Arizona State University
\\Tempe, Arizona 85287}
\email{kaliszewski@asu.edu}
\author[Omland]{Tron Omland}
\address{School of Mathematical and Statistical Sciences
\\Arizona State University
\\Tempe, Arizona 85287}
\email{omland@asu.edu}
\author[Quigg]{John Quigg}
\address{School of Mathematical and Statistical Sciences
\\Arizona State University
\\Tempe, Arizona 85287}
\email{quigg@asu.edu}

\subjclass[2010]{Primary 46L55; Secondary 46M15}

\keywords{action, coaction, crossed-product duality, category equivalence, $C^*$-correspondence, exterior equivalence, outer conjugacy}

%\date{November 14, 2012}
\date{\today}

\begin{abstract}

In this partly expository paper we compare three different categories of $C^*$-algebras
in which crossed-product duality can be formulated, 
both for actions and for coactions of locally compact groups. 
In these categories, the isomorphisms correspond to
$C^*$-algebra isomorphisms, imprimitivity bimodules,
and outer conjugacies, respectively.

In each case, a variation of the fixed-point functor that arises from classical 
Landstad duality is used to obtain a quasi-inverse for a crossed-product functor.
To compare the various cases, 
we describe in a formal way
our view of the fixed-point functor as an ``inversion'' 
of the process of forming a crossed product.
In some cases, we obtain what we call ``good'' inversions,
while in others we do not.

For the outer-conjugacy categories, 
we generalize a theorem of Pedersen to obtain a fixed-point functor 
that is quasi-inverse to the reduced-crossed-product functor for actions, 
and we show that this gives a good inversion.
For coactions, we prove a partial version of Pedersen's theorem
that allows us to define a fixed-point functor, but 
the question of whether it is a
quasi-inverse for the crossed-product functor remains open.
\end{abstract}
\maketitle

\section{Introduction}\label{intro}

In crossed-product duality for $C^*$-algebras there are two problems that are of interest,
both stated for a fixed locally compact group $G$.
The first, and perhaps the original one, is:
Given a crossed product $A\rtimes_\alpha G$, 
how can $A$ and $\alpha$ be recovered?
Secondly: How can we identify a $C^*$-algebra $B$ 
as the (full or reduced) crossed product
of some other $C^*$-algebra $A$ by an action of $G$?

The
simplest case to consider is where $G$ is abelian.
Then there is a dual action of $\what G$ on $A\rtimes_{\alpha}G$ defined 
for $f\in C_c(G,A)$ by $\what\alpha_\chi(f)(t)=\chi(t)f(t)$.  
In this situation, a famous result of Takai \cite{takai} tells us that
\[
(A\rtimes_\alpha G)\rtimes_{\what\alpha}\what G \simeq A \otimes\KK(L^2(G)),
\]
so 
we can recover $A$ up to 
Morita equivalence
(and if $G$ is second countable, up to
stabilization).
The generalization of Takai's theorem to nonabelian groups involves the dual \emph{coaction} 
$\what\alpha$ of $G$.
This version is usually now called Imai-Takai duality \cite{it},
and in a similar fashion it gives an isomorphism
\[
(A\rtimes_{\alpha,r} G)\rtimes_{\what\alpha^n}G \simeq A \otimes\KK(L^2(G)),
\]
where $\what\alpha^n$ is our notation for the appropriate version of the dual coaction on the reduced crossed product.
However, Imai-Takai duality does not give any useful answer to the question of
when a $C^*$-algebra is a crossed product by an action of~$G$.
In fact, what the theorem says is that, {up to stabilization}, \emph{every} $C^*$-algebra is a crossed 
product.

For reduced crossed products by actions, Landstad answered both of the above questions up to isomorphism (\cite{lan:dual}).
First, given a crossed product $A\rtimes_{\alpha,r}G$,
we can recover $A$ as a generalized 
fixed-point algebra of the crossed product
that depends on both the dual coaction $\what\alpha^n$
of $G$ on $A\rtimes_{\alpha,r}G$ and the canonical embedding 
$i_G^r$ of $G$ into $M(A\rtimes_{\alpha,r}G)$.
Second, a given $C^*$-algebra $C$ is isomorphic
to a reduced crossed product by an action of $G$ if and only if
there exists a normal coaction of $G$ on $C$ and a unitary 
homomorphism of $G$ in $M(C)$ that interact with
one another like $\what\alpha^n$ and $i_G^r$ would.\footnote{Landstad used \emph{reduced} coactions,
but his results can be applied to full coactions 
on reduced crossed products using \cite{fullred}.}
The dual questions,
where actions are replaced by coactions,
were answered in \cite{qlandstad} (see \secref{class-coact}
for more details).

These results, now called \emph{classical Landstad duality},
have recently \cite{clda,cldx} been recast in  a categorical framework,
so let us first consider two categories of $C^*$-algebras and morphisms that are central in this context.

In the theory of $C^*$-algebras, and in particular in classification theory, there are  two types of equivalences that have an especially great impact; $C^*$-isomorphisms and Morita equivalences. Therefore, there are two categories of $C^*$-algebras that are natural to study; the \emph{nondegenerate category} $\csn$, whose morphisms are
nondegenerate $C^*$-homomorphisms,  and the \emph{enchilada category} $\cse$, whose morphisms are 
(isomorphism classes of) $C^*$-correspondences (see \secref{corres}).
The latter category does not have such a long history, and was treated extensively in \cite{enchilada}, although not with the name ``enchilada'' attached.
If we fix a locally compact group $G$, then the nondegenerate and the enchilada categories give rise to equivariant
categories $\acn$ and $\ace$, where the objects are pairs $(A,\alpha)$ comprising
a $C^*$-algebra $A$ and an action $\alpha$ of $G$ on $A$, and where the morphisms are the ones from $\csn$ or $\cse$, respectively, that are $G$-equivariant.

It was shown in \cite{clda} that the nondegenerate category of actions is equivalent to a certain \emph{comma category} of maximal coactions.
In that setup, a quasi-inverse functor from this comma category into the category of actions was constructed.
We call this the \emph{fixed-point functor}, since the image of an object in the comma category gives a generalized fixed-point algebra together with an action.

In this paper, we further develop this categorical perspective. In particular, a notion we call an \emph{inversion} of a functor $P\colon \CC\to \DD$ is introduced. Our motivation is that when $P$ is not an equivalence, we wish to keep track of what information it forgets. An inversion is therefore a category $\wilde \DD$ that contains the data of both $\DD$ and the extra structure that $P$ forgets, an equivalence $\wilde P\colon \CC\to\wilde \DD$, and a forgetful functor $F\colon\wilde \DD\to \DD$ with $F\circ\wilde P=P$.
Any choice of quasi-inverse $H\colon \wilde \DD\to \CC$ of $\wilde P$ is regarded as ``inverting the process'' $P$.

As we explain in \secref{invert actions},  \emph{categorical Landstad duality} fits into this setup. Indeed, the full-crossed-product functor $(A,\alpha)\to A\rtimes_\alpha G$ from $\acn$ to $\csn$ plays the role of $P\colon \CC\to \DD$, and the comma category plays the role of $\wilde \DD$. Moreover, this inversion is \emph{good}, meaning in particular that the forgetful functor $\wilde \DD\to \DD$ enjoys a certain lifting property.

In the same way, we find an inversion for the crossed-product functor between  enchilada categories. It turns out that the comma category analogous to the one used for the nondegenerate category has too few morphisms to be equivalent to the category of actions, so we need to consider a ``semi-comma category'' instead (borrowing a concept and terminology from \cite{hkrwnatural}). With this modification we get an inversion, and the quasi-inverse is a fixed-point functor. However, in this case the inversion is not good,
essentially because a $C^*$-algebra can be Morita equivalent to a crossed product without being isomorphic to one.

We remark that, as a consequence of the Imai-Takai duality mentioned above, a crossed-product functor that only keeps track of the dual coaction defines an equivalence between the enchilada category of actions and the enchilada category of coactions. This gives rise to an inversion as well. However, we want to compare the various categories and functors in a more direct way, so therefore the semi-comma category and the fixed-point functor are used.

We think of our third example as lying between the two cases discussed above. The underlying category in this example is still the nondegenerate one, but now the isomorphisms correspond to outer conjugacies. Inspired by a theorem of Pedersen, which we generalize from abelian to arbitrary groups, we define a certain ``fixed-point equivariant category'' of coactions, which is equivalent with the ``outer category'' of actions. This gives an inversion of the crossed-product functor from the outer category to $\csn$, which is also a good inversion.
The main innovation in this paper is the introduction and study of these outer categories.

The paper is organized as follows.
As an attempt to make it mostly self-contained,
we first provide a preliminary section recalling much of the background material.
Then, in \secref{ped thm} we prove the generalization of Pedersen's theorem for actions by nonabelian groups,
and also give a version for coactions.

Further, we introduce the category theoretical framework for inverting a process in \secref{abstract},
and define the concept of a (good) inversion.

In \secref{invert actions} we show that the three versions of crossed-product duality for actions fit into this categorical setup.
In particular, we show that the category equivalence arising from classical Landstad duality
gives category equivalences also in the enchilada and outer categories.
We present our results for full crossed products and the use of maximal coactions,
but only minor modifications are required to obtain similar results for the reduced-crossed-product functor.

In the last section,
for the nondegenerate and enchilada categories,
we produce abstract inversions of crossed-product duality for coactions similar to the ones for actions.
However, in this case, we work with normal coactions,
since this closely resembles the techniques applied for actions.

Finally, for the outer category,
we use a version Pedersen's theorem for coactions
that allows us to define a 
crossed-product functor 
in a manner parallel to the one for actions,
but our current version of Pedersen's theorem 
is not yet strong enough to give a category equivalence.

The second author would like to thank Johan Steen and Martin Wanvik from NTNU for helpful e-mail correspondence on various category-theoretic aspects.
The second author is funded by the Research Council of Norway (Project no.: 240913).

\section{Preliminaries}\label{prelim}

Throughout, $G$ will be a locally compact (Hausdorff) group.
By a homomorphism between $C^*$-algebras, we always mean a 
$*$-homomorphism.
We always use the minimal tensor product for $C^*$-algebras. 

\subsection{Actions and coactions}\label{act-coact}

An \emph{action} of $G$ on a $C^*$-algebra $A$
is a strongly continuous homomorphism $\alpha\colon G\to\aut(A)$.
Because we typically consider the group $G$ to be fixed and the actions to vary,
we will refer to the pair $(A,\alpha)$ as an action of $G$.
It is also common to call the triple $(A,G,\alpha)$ a \emph{$C^*$-dynamical system}.
One example of an action that deserves special mention is the \emph{right translation}
action $(C_0(G),\rt)$ defined by $\rt_sf(t)=f(ts)$.
Given a strictly continuous unitary homomorphism $V\colon G\to M(A)$,
which can equivalently be regarded as a nondegenerate homomorphism $V\colon C^*(G)\to M(A)$,
the associated \emph{inner action} $\ad u$ of $G$ on $A$ is defined by
\[
(\ad u)_s(a)=\ad u_s(a)=u_sau_s^*.
\]

To every action $(A,\alpha)$
we associate a full crossed product $A\rtimes_\alpha G$
and a reduced crossed product 
$A\rtimes_{\alpha,r}G$ in the usual way.
(A more detailed discussion of crossed products
can be found in \cite[Appendix~A]{enchilada}.)
We denote the  canonical universal
covariant homomorphism  of $(A,\alpha)$ in the multiplier algebra 
$M(A\rtimes_{\alpha} G)$ by $(i_A^\alpha,i_G^\alpha)$, 
and we write $\Lambda^\alpha\colon A\rtimes_\alpha G\to A\rtimes_{\alpha,r} G$ for the regular representation;
the canonical covariant homomorphism of $(A,\alpha)$ in 
$M(A\rtimes_{\alpha,r}G)$ is 
$(i_A^{\alpha,r}, i_G^{\alpha,r}) 
= (\Lambda^\alpha\circ i_A^\alpha,
\Lambda^\alpha\circ i_G^\alpha)$.  
However, when there is no potential ambiguity, 
we will abbreviate these as $(i_A,i_G)$, $\Lambda$, 
and $(i_A^r,i_G^r)$, respectively.
For every  covariant homomorphism $(\pi,U)$
of $(A,\alpha)$ in a $C^*$-algebra $C$,
there is an \emph{integrated form} $\pi\times U\colon
A\rtimes_\alpha G \to C$ such that
$(\pi\times U)\circ i_A = \pi$
and $(\pi\times U)\circ i_G = U$.
Moreover, if $\ker\Lambda \subseteq \ker(\pi\times U)$, 
then $\pi\times U$ descends to a homomorphism
$\pi\times_r U\colon A\rtimes_{\alpha,r}G\to C$,
also called the \emph{integrated form} of $(\pi,U)$,
such that $(\pi\times_{r}U)\circ\Lambda = \pi\times U$.

If $(A,\alpha)$ and $(B,\beta)$ are actions of~$G$,
a nondegenerate homomorphism $\phi\colon A\to M(B)$ is 
\emph{$\alpha-\beta$-equivariant} if
\begin{equation*}\label{G-equivariant}
\varphi\circ\alpha_s=\beta_s\circ\varphi\righttext{for all}s\in G.
\end{equation*}
Such a map induces nondegenerate homomorphisms 
$\phi\rtimes G\colon A\rtimes_\alpha G\to M(B\rtimes_\beta G)$
and $\phi\rtimes G\colon A\rtimes_{\alpha,r}G\to M(B\rtimes_{\beta,r}G)$.

Two actions $(A,\alpha)$ and $(B,\beta)$ are \emph{conjugate}
if there exists an $\alpha-\beta$-equivariant $C^*$-isomorphism 
$\varphi\colon A\to B$,
in which case
$\phi\rtimes G$ and $\phi\rtimes_r G$ 
are isomorphisms of the respective crossed products. 

A \emph{coaction} of $G$ on a $C^*$-algebra $A$
is an injective nondegenerate homomorphism $\delta\colon A\to M(A\otimes C^*(G))$ satisfying
the (additional) \emph{nondegeneracy condition}
\[
\clspn\{\delta(A)(1\otimes C^*(G))\}=A\otimes C^*(G)
\]
and the \emph{coaction identity}
\[
(\delta\otimes\id)\circ\delta=(\id\otimes\delta_G)\circ\delta.
\]
Here the coaction $\delta_G$ of $G$ on $C^*(G)$ is the  canonical map $C^*(G)\to M(C^*(G)\otimes C^*(G))$ given by the integrated form of $s\mapsto s\otimes s$.
In analogy with actions,
we also refer to the pair $(A,\delta)$ as
a coaction of $G$.
Given a nondegenerate homomorphism $\mu\colon C_0(G)\to M(A)$,
the associated \emph{inner coaction} $\ad\mu$ is given by
\[
\ad\mu(a)=\ad (\mu\otimes\id)(w_G)(a\otimes 1),
\]
where $w_G$ denotes the unitary element of
\[
M\bigl(C_0(G)\otimes C^*(G)\bigr)=C_b\bigl(G,M^\beta(C^*(G))\bigr)
\]
associated to the canonical unitary embedding of $G$ inside $M(C^*(G))$,
and where in turn $C_b(G,M^\beta(C^*(G)))$ denotes the continuous bounded functions from $G$ to $M(C^*(G))$ with the strict topology.

As with full crossed products by actions,
to each coaction $(A,\delta)$ we associate 
a crossed product $C^*$-algebra $A\rtimes_\delta G$,
and the covariant homomorphisms of $(A,\delta)$
correspond, via the integrated form, to homomorphisms
of $A\rtimes_\delta G$.  
The canonical universal covariant homomorphism of $(A,\delta)$ 
in $M(A\rtimes_\delta G)$
is  denoted by $(j_A^\delta,j_G^\delta)$, but as for actions, the notation is
usually simplified to avoid clutter.
When $j_A$ is injective,  $\delta$ is called a \emph{normal} coaction.

If $(A,\delta)$ and $(B,\epsilon)$ are coactions of $G$,
a nondegenerate homomorphism $\phi\colon A\to M(B)$ is 
$\delta-\epsilon$ \emph{equivariant}
if
\begin{equation}\label{G-equivariant-co}
(\varphi\otimes\id)\circ\delta=\epsilon\circ\varphi,
\end{equation}
and such a map
induces a nondegenerate homomorphism 
$\varphi\rtimes G\colon A\rtimes_\delta G\to M(B\rtimes_\epsilon G)$ 
between the corresponding crossed products.

Two  coactions $(A,\delta)$ and $(B,\epsilon)$ are \emph{conjugate}
if there exists a $\delta-\epsilon$ equivariant 
isomorphism $\varphi\colon A\to B$,
in which case
$\phi\rtimes G$ is an isomorphism of the
crossed products. 

For every action $(A,\alpha)$, there is a \emph{dual coaction}
$\what\alpha$ of $G$ on  $A\rtimes_{\alpha}G$,
defined on generators by
\begin{gather*}
\what\alpha(i_A(a))=i_A(a)\otimes 1 \quad\text{and}\quad
\what\alpha(i_G(s))=i_G(s)\otimes s.
\end{gather*}
There is also a normal dual coaction $\what\alpha^n$ 
on $A\rtimes_{\alpha,r}G$, defined similarly on generators.
Note that
$i_G\colon C^*(G)\to M(A\rtimes_\alpha G)$ is $\delta_G-\what\alpha$ equivariant;
it follows that if $(B,\beta)$ is an action and $\varphi\colon A\to M(B)$ is 
$\alpha - \beta$ equivariant,
then the induced homomorphism $\varphi\rtimes G\colon A\rtimes_{\alpha}G\to M(B\rtimes_{\beta}G)$
will be $\what\alpha - \what\beta$ equivariant,
and $\varphi\rtimes_r G\colon A\rtimes_{\alpha,r}G\to M(B\rtimes_{\beta,r}G)$
will be $\what\alpha^n - \what\beta^n$ equivariant.

Similarly, for every coaction $(A,\delta)$, 
there is a \emph{dual action} $\what\delta$ of $G$ on  $A\rtimes_\delta G$
defined by
\[
\what\delta_s=j_A\times(j_G\circ\rt_s).
\]
The canonical map $j_G\colon C_0(G)\to M(A\rtimes_\delta G)$ is $\rt - \what\delta$ equivariant, so 
if $(B,\epsilon)$ is a coaction and $\varphi\colon A\to M(B)$ is 
$\delta - \epsilon$ equivariant,
then the induced homomorphism $\varphi\rtimes G\colon A\rtimes_\delta G\to M(B\rtimes_\epsilon G)$
will be $\what\delta - \what\epsilon$ equivariant.

If $(A,\delta)$ is a coaction, then the pair
\[
\bigl((\id\otimes\lambda)\circ\delta\times (1\otimes M),1\otimes\rho\bigr),
\]
where $\lambda$ and $\rho$ are the left and right regular representations of $G$
and $M$ is the multiplication representation of $C_0(G)$ on $L^2(G)$,
is a covariant representation of the dual action $(A\rtimes_\delta G,\what\delta)$,
and the integrated form is a surjection
\[
\Phi\colon A\rtimes_\delta G\rtimes_{\what\delta} G\to A\otimes\KK(L^2(G)),
\]
called the \emph{canonical surjection},
where $\KK$ denotes the compact operators on $L^2(G)$.
The coaction $\delta$ is called \emph{maximal} if $\Phi$ is an isomorphism,
and by \cite[Theorem~2.2]{maximal} $\delta$ is normal if and only if $\Phi$ factors through an isomorphism
of the reduced crossed product
$A\rtimes_\delta G\rtimes_{\what\delta,r} G$
onto $A\otimes\KK(L^2(G))$.

\subsection{Normalization and maximalization}\label{normalization}
A \emph{normalization} of a coaction $(A,\delta)$ is a normal coaction $(A^n,\delta^n)$
together with a $\delta-\delta^n$ equivariant surjection $\eta\colon A\to A^n$ such that
\[
\eta\rtimes G\colon A\rtimes_\delta G\to A^n\rtimes_{\delta^n} G
\]
is an isomorphism.
Every coaction has a normalization,
and,
given another coaction $(B,\epsilon)$,
if $\phi\colon A\to M(B)$ is a nondegenerate $\delta-\epsilon$ equivariant homomorphism
then there is a unique nondegenerate $\delta^n-\epsilon^n$ equivariant homomorphism
$\phi^n$ making the following diagram commute:
\[
\xymatrix{
A \ar[r]^-\phi \ar[d]_{\eta_A}
&M(B) \ar[d]^{\eta_B}
\\
A^n \ar@{-->}[r]_-{\phi^n}
&M(B^n).
}
\]
Consequently, normalizations are unique up to isomorphism.

Similarly, a \emph{maximalization} of $(A,\delta)$ is a maximal coaction $(A^m,\delta^m)$
together with a $\delta^m-\delta$ equivariant surjection $\psi\colon A^m\to A$ such that
\[
\psi\rtimes G\colon A^m\rtimes_{\delta^m} G\to A\rtimes_\delta G
\]
is an isomorphism.
Every coaction has a maximalization,
and,
given another coaction $(B,\epsilon)$,
if $\phi\colon A\to M(B)$ is a nondegenerate $\delta-\epsilon$ equivariant homomorphism
then there is a unique nondegenerate $\delta^m-\epsilon^m$ equivariant homomorphism
$\phi^m$ making the following diagram commute:
\[
\xymatrix{
A^m \ar@{-->}[r]^-{\phi^m} \ar[d]_{\psi_A}
&M(B^m) \ar[d]^{\psi_B}
\\
A \ar[r]_-\phi
&M(B).
}
\]
Consequently, maximalizations are unique up to isomorphism.

If $(A,\delta)$ is a maximal coaction then
the normalization $\psi\colon A\to A^n$ is also a maximalization of the coaction $(A^n,\delta^n)$.
If $(B,\epsilon)$ is another maximal coaction,
then the map $\phi\mapsto \phi^n$ gives a bijection between
the sets of
$\delta-\epsilon$ equivariant nondegenerate homomorphisms $\phi\colon A\to M(B)$
and
$\delta^n-\epsilon^n$ equivariant nondegenerate homomorphisms $\phi^n\colon A^n\to M(B^n)$,
and moreover $\phi$ is an isomorphism if and only if $\phi^n$ is.

Given an action $(A,\alpha)$,
the dual coaction $\what\alpha$ on the full crossed product $A\rtimes_\alpha G$ is maximal,
the dual coaction $\what\alpha^n$ on the reduced crossed product $A\rtimes_{\alpha,r} G$ is normal,
and the regular representation
\[
\Lambda\colon (A\rtimes_\alpha G,\what\alpha)\to (A\rtimes_{\alpha,r} G,\what\alpha^n)
\]
is both a maximalization and a normalization.

\subsection{$C^*$-correspondences}\label{corres}

Let $A$ and $B$ be $C^*$-algebras.
An $A-B$ \emph{correspondence} is a (right) Hilbert $B$-module $X$ together with a homomorphism of $A$ 
into the $C^*$-algebra $\LL(X)$ of adjointable (hence bounded and $B$-linear) maps on $X$.
We say that the correspondence is \emph{nondegenerate} if $X$ is nondegenerate as a left $A$-module, i.e., $A\cdot X=X$.
For any $A-B$ correspondence $X$,
we use $M(X)$ to denote the set $\LL_B(B,X)$ of adjointable
maps from $B$ to $X$,
which is an $M(A)-M(B)$ correspondence in a natural way (see \cite[Definition~1.14]{enchilada}).

Given an $A-B$ correspondence $X$ and a $B-C$ correspondence~$Y$,
the balanced tensor product $X\otimes_B Y$ 
is an $A-C$ correspondence,
and the isomorphism class of $X\otimes_B Y$ depends only on the 
isomorphism classes of $X$ and $Y$ (\cite[Theorem~2.2]{enchilada}).

A \emph{Hilbert $A-B$ bimodule} is an $A-B$ correspondence that also has a left $A$-valued inner product ${}_A\<\cdot,\cdot\>$ that is compatible with the right $B$-valued inner product $\<\cdot,\cdot\>_B$ in the sense that
${}_A\<x,y\>\cdot z=x\cdot \<y,z\>_B$ for all $x,y,z\in X$.
An \emph{imprimitivity bimodule} is a Hilbert $A-B$ bimodule $X$ that is both left- and right-full,
meaning that $\clspn {}_A\<X,X\>=A$ and $\clspn\<X,X\>_B=B$.
Two $C^*$-algebras $A$ and $B$ are \emph{Morita equivalent} if there exists an 
$A-B$ imprimitivity  bimodule.

If $X$ is a nondegenerate $A-B$ correspondence,
$Y$ is a nondegenerate $C-D$ correspondence,
and $\phi\colon A\to M(C)$ and $\psi\colon B\to M(D)$ are homomorphisms,
a linear map $\zeta\colon X\to M(Y)$ is a
\emph{$\phi-\psi$ compatible correspondence homomorphism}
if
$\<\zeta(x),\zeta(y)\>_{M(D)}=\psi(\<x,y\>_B)$
and
$\phi(a)\cdot \zeta(x)=\zeta(a\cdot x)$
for all $x,y\in X$ and $a\in A$.
These properties imply that
$\zeta(x)\cdot \psi(b)=\zeta(x\cdot b)$
for all $x\in X$ and $b\in B$.
Sometimes we write
\[
(\phi,\zeta,\psi)\colon (A,X,B)\to (M(C),M(Y),M(D))
\]
for the correspondence homomorphism.
A correspondence homomorphism $(\phi,\zeta,\psi)$ is a \emph{correspondence isomorphism}
if $\phi\colon A\to C$ and $\psi\colon B\to D$ are $C^*$-isomorphisms and $\zeta\colon X\to Y$ is bijective.
In this case, if $X$ and $Y$ are Hilbert bimodules,
then $(\phi,\zeta,\psi)$ also preserves this extra structure in the sense that
\[
{}_C\<\zeta(x),\zeta(y)\>=\phi({}_A\<x,y\>)\righttext{for all}x,y\in X,
\]
and we call $(\phi,\zeta,\psi)$ a \emph{Hilbert bimodule isomorphism}.

Given actions $(A,\alpha)$ and $(B,\beta)$,
an $(A,\alpha)-(B,\beta)$ \emph{correspondence action} $(X,\gamma)$
is an $A-B$ correspondence $X$ equipped with an $\alpha-\beta$ 
compatible action $\gamma$ (\cite[Section~2.2]{enchilada}).
To every such correspondence action we associate 
a \emph{full crossed product} correspondence $X\rtimes_\gamma G$ that is an
$(A\rtimes_{\alpha} G)-(B\rtimes_{\beta} G)$ correspondence and
comes with a canonical universal
$i_A^\alpha-i_B^\beta$ compatible
correspondence homomorphism $i_X^\gamma$ of $X$
in $M(X\rtimes_\gamma G)$
such that
$X\rtimes_\gamma G=\clspn\{i_X^\gamma(X)\cdot i_G^\beta(C^*(G))\}$.
Similarly, there is a \emph{reduced crossed product} correspondence
$X\rtimes_{\gamma,r} G$
that is an $(A\rtimes_{\alpha,r} G)-(B\rtimes_{\beta,r} G)$ correspondence
and 
comes with a canonical 
$i_A^{\alpha,r}-i_B^{\beta,r}$ compatible
correspondence homomorphism $i_X^{\gamma,r}$.
Actions $(A,\alpha)$ and $(B,\beta)$ are \emph{Morita equivalent}
if there exists an 
$(A,\alpha)-(B,\beta)$ {correspondence action} $(X,\gamma)$
such that $X$ is an $A-B$ imprimitivity bimodule (\cite{combes}).

Given coactions $(A,\delta)$ and $(B,\epsilon)$,
an $(A,\delta)-(B,\epsilon)$ \emph{correspondence coaction}
is an $A-B$ correspondence $X$ equipped with a $\delta-\epsilon$ 
compatible coaction $\zeta$
(\cite[Section~2.3]{enchilada}).
For example, the crossed product correspondences
$X\rtimes_\gamma G$ and $X\rtimes_{\gamma,r}G$ described above
carry $\what\alpha-\what\beta$ and $\what\alpha^n-\what\beta^n$-compatible \emph{dual coactions} $\what\gamma$ and $\what\gamma^n$,
respectively.
To every correspondence coaction we associate a
\emph{crossed product} correspondence $X\rtimes_\zeta G$
that 
is an $(A\rtimes_\delta G)-(B\rtimes_\epsilon G)$ correspondence,
comes with a canonical 
$j_A^\delta-j_B^\epsilon$ compatible correspondence 
homomorphism $j_X^\zeta$ of
$X$ in $M(X\rtimes_\zeta G)$
such that
$X\rtimes_\zeta G=\clspn\{j_X^\zeta(X)\cdot j_G^\epsilon(C_0(G))\}$,
and carries
a $\what\delta-\what\epsilon$ compatible dual action $\what\zeta$. 
Two coactions $(A,\delta)$ and $(B,\epsilon)$ are  
\emph{Morita equivalent} if there 
exists an $(A,\delta)-(B,\epsilon)$ correspondence coaction 
$(X,\gamma)$ such that $X$ is an $A-B$ imprimitivity bimodule.

\subsection{Linking algebras}\label{linking}
Let $(A,\alpha)$ and $(B,\beta)$ be actions, let $(X,\gamma)$ be an $(A,\alpha)-(B,\beta)$ correspondence action,
let $K=\KK(X)$ be the algebra of generalized compact operators,
and let $L=L(X)=\smtx{K&X\\{*}&B}$
be the linking algebra
(see \cite[Section~1.5]{enchilada}).
Then by \cite[Proposition~2.27]{enchilada}
there is a unique action $\sigma$ of $G$ on $K$ such that $\gamma$ is $\sigma-\beta$ compatible,
and moreover the canonical nondegenerate homomorphism $\phi_A\colon A\to M(K)=\LL(X)$ is $\alpha-\sigma$ equivariant.
By \cite[Lemma~2.21]{enchilada} there is an action $\tau=\smtx{\sigma&\gamma\\{*}&\beta}$ of $G$ on $L$.
There is a natural identification (more properly, an isomorphism, but we blur the distinction)
\[
(L\rtimes_\tau G,\what\tau)=\left(\mtx{K\rtimes_\sigma G&X\rtimes_\gamma G\\{*}&B\rtimes_\beta G},\mtx{\what\sigma&\what\gamma\\{*}&\what\beta}\right).
\]
For the isomorphism of the crossed products, without the dual coactions,
see  \cite{combes,taco} --- these references require that the $B$-valued inner product be full, but the proof of the above isomorphism carries over.
\cite[Lemma~3.3 and Proposition~3.5 together with its proof]{enchilada} states the above isomorphism for reduced crossed products.

Dually, let $(A,\delta)$ and $(B,\epsilon)$ be coactions and let $(X,\zeta)$ be an $(A,\delta)-(B,\epsilon)$ correspondence coaction.
Then by \cite[Proposition~2.30]{enchilada}
there is a unique coaction $\mu$ of $G$ on $K$ such that $\zeta$ is $\mu-\epsilon$ compatible,
and moreover the canonical nondegenerate homomorphism $\phi_A\colon A\to M(K)=\LL(X)$ is $\delta-\mu$ equivariant.
By \cite[Lemma~2.22]{enchilada} there is a coaction $\nu=\smtx{\mu&\zeta\\{*}&\epsilon}$ of $G$ on $L$.
By \cite[Proposition~2.5]{maximal} $\mu$ and $\nu$ are maximal if $\epsilon$ is.
By \cite[Proposition~3.10]{enchilada} there is a natural identification (more properly, an isomorphism, but we blur the distinction)
\[
(L\rtimes_\nu G,\what\nu)=\left(\mtx{K\rtimes_\mu G&X\rtimes_\zeta G\\{*}&B\rtimes_\epsilon G},\mtx{\what\mu&\what\zeta\\{*}&\what\epsilon}\right).
\]
\cite[Proposition~3.10]{enchilada} only states this isomorphism for the crossed products; the statement regarding the dual actions
was apparently regarded in \cite{enchilada} as being self-evident.

\subsection{Exterior equivalence and outer conjugacy}\label{exterior}

Let $(B,\beta)$ be an action of $G$.
A \emph{$\beta$-cocycle} is a strictly continuous unitary map $u\colon G\to M(B)$ such that
\[
u_{st}=u_s\beta_s(u_t)\righttext{for all}s,t\in G.
\]
Given a $\beta$-cocycle $u$, the map $s\mapsto\ad u_s\circ\beta_s$ gives an action $\ad u\circ\beta$ on~$B$,
which is said to be \emph{exterior equivalent} to $\beta$.
An action $(A,\alpha)$ is \emph{outer conjugate} to $(B,\beta)$ if it is conjugate to $\ad u\circ\beta$ for 
some $\beta$-cocycle~$u$.

Now let $(B,\epsilon)$ be a coaction of $G$.
An \emph{$\epsilon$-cocycle} is a unitary element $U\in M(B\otimes C^*(G))$ such that
\begin{enumerate}
\item $(\id\otimes\delta_G)(U)=(U\otimes 1)(\epsilon\otimes\id)(U)$, and
\item $\ad U\circ\epsilon(B)(1\otimes C^*(G))\subset B\otimes C^*(G)$.
\end{enumerate}
Given an $\epsilon$-cocycle, $\ad U\circ\epsilon$ is a coaction on $B$ 
which is said to be \emph{exterior equivalent} to $\epsilon$,
and which is normal if $\epsilon$ is.
A coaction $(A,\delta)$ is \emph{outer conjugate} to $(B,\epsilon)$
if it is conjugate to $\ad U\circ\epsilon$ for some $\epsilon$-cocycle~$U$.

Of the three properties discussed in Subsections~\ref{act-coact}, \ref{corres}, and \ref{exterior},
conjugacy is stronger than outer conjugacy
(for both actions and coactions),
and outer conjugacy is in turn stronger than Morita equivalence.
Incidentally, ``outer'' Morita equivalence of actions or coactions, if it were defined
in analogy with \secref{exterior},
would just coincide with the respective type of equivariant
Morita equivalence.

\subsection{Classical Landstad duality for actions}\label{class-act}

As outlined in Section~\ref{intro}, 
Landstad duality is a method of recovering an action or coaction up to isomorphism from its crossed product,
as a ``generalized fixed-point algebra''.
Here we explain in more detail how this works for 
full crossed products by actions,
and also for
crossed products by normal coactions.

We will begin by recalling Landstad duality for reduced crossed products by actions.
\thmref{ldar} below is a reformulation of
\cite[Theorem~3.1]{ldfull},
modulo an addendum taken from
\cite[Theorem~3]{lan:dual}\footnote{Landstad used reduced, rather than full, coactions.}.

\begin{thm}[Landstad duality for reduced crossed products]\label{ldar}
Let $C$ be a $C^*$-algebra and $G$ a locally compact group.
Then there exist an action $(A,\alpha)$ and an isomorphism $\theta\colon A\rtimes_{\alpha,r} G\to C$
if and only if
there exist a normal coaction $\delta$ of $G$ on $C$
and a $\delta_G-\delta$ equivariant nondegenerate homomorphism $V\colon C^*(G)\to M(C)$.

Moreover, given $\delta$ and $V$ as above,
the action $(A,\alpha)$ and the isomorphism $\theta$ can be chosen such that
$\theta$ is $\what\alpha^n-\delta$ equivariant and
$\theta\circ i_G^r=V$;
with such a choice,
if $(B,\beta)$ is any action and
$\sigma\colon B\rtimes_{\beta,r} G\to C$ is a $\what\beta^n-\delta$ equivariant isomorphism such that $\sigma\circ i_G^r=V$,
then there exists an $\alpha-\beta$ equivariant isomorphism $\phi\colon A\to B$ such that
$\sigma\circ (\phi\rtimes_r G)=\theta$.

In fact,
we can take $A$ to be the $C^*$-subalgebra of $M(C)$
defined as all elements $a\in M(C)$ satisfying
\emph{Landstad's conditions}
\cite[(3.6)--(3.8)]{lan:dual}\textup{:}
\begin{align}
\label{fix}&\delta(a)=a\otimes 1;
\\
\label{in C}&\text{$aV(f),V(f)a\in C$ for all $f\in C_c(G)$;}
\\
\label{continuous}&\text{$s\mapsto \ad V_s(a)$ is norm continuous from $G$ to $C$,}
\end{align}
and we can let $\alpha$ be the restriction to $A$ of
\(the extension to $M(C)$ of\)
the inner action $\ad V$.
Then, letting $\iota\colon A\to M(C)$ be the inclusion map,
the pair $(\iota,V)$ is a covariant homomorphism of $(A,\alpha)$ in $M(C)$,
whose integrated form factors through an isomorphism $A\rtimes_{\alpha,r} G\simeq C$.
\end{thm}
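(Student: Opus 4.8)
The plan is to treat the equivalence one implication at a time and then extract the ``moreover'' and ``in fact'' clauses from an explicit model. The ``only if'' direction costs nothing: given an isomorphism $\theta\colon A\rtimes_{\alpha,r}G\to C$, transport the normal dual coaction along $\theta$, setting $\delta=(\theta\otimes\id)\circ\what\alpha^n\circ\theta^{-1}$ (again a normal coaction) and $V=\bar\theta\circ i_G^r$. Then $V$ is nondegenerate because $i_G^r$ is, and $V$ is $\delta_G$--$\delta$ equivariant because $i_G^r$ is $\delta_G$--$\what\alpha^n$ equivariant, as recalled in \subsecref{act-coact}.

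For the converse I would build $(A,\alpha)$ and $\theta$ by hand, which proves the last paragraph of the statement at the same time. Given a normal coaction $\delta$ on $C$ and a $\delta_G$--$\delta$ equivariant nondegenerate $V\colon C^*(G)\to M(C)$, let $A\subseteq M(C)$ be the set of elements satisfying Landstad's conditions \eqref{fix}--\eqref{continuous}. The routine verifications are that $A$ is a norm-closed $*$-subalgebra of $M(C)$ --- \eqref{fix} is preserved because $\delta$ is a homomorphism, \eqref{in C} by an approximate-identity argument in $C_c(G)$ using that $V$ is nondegenerate, and \eqref{continuous} because $\{\ad V_s\}_{s\in G}$ is a group of automorphisms of $M(C)$; that $\ad V$ leaves $A$ invariant (apply $\delta$ to $\ad V_s(a)$ and use equivariance of $V$ with \eqref{fix}), so it restricts to an action $\alpha$ of $G$ on $A$, strongly continuous precisely by \eqref{continuous}; and that, with $\iota\colon A\hookrightarrow M(C)$ the inclusion, $(\iota,V)$ is a covariant homomorphism of $(A,\alpha)$ in $M(C)$, since $V_s\iota(a)V_s^*=\ad V_s(a)=\iota(\alpha_s a)$. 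Hence the integrated form $\iota\times V$ exists, and its range lies in $C$ by \eqref{in C} together with the density $A\rtimes_\alpha G=\clspn\{i_A(A)i_G(C_c(G))\}$.

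The substantive claim is that $\iota\times V$ annihilates $\ker\Lambda^\alpha$, so it descends to $\iota\times_r V\colon A\rtimes_{\alpha,r}G\to C$, and that this descended map is an isomorphism; this is exactly the nontrivial half of classical Landstad duality, and I would invoke \cite[Theorem~3.1]{ldfull} together with the addendum from \cite[Theorem~3]{lan:dual} rather than reprove it. If a self-contained argument is wanted, there are two things to show: surjectivity, i.e.\ $\clspn\{A\,V(C_c(G))\}=C$, obtained by slicing $\delta(c)$ against functionals on $C^*(G)$ and averaging against $V$ to manufacture enough elements of $A$ --- this is where the coaction-nondegeneracy condition and the normality of $\delta$ are used; and faithfulness of the descended map, which one gets by building an inverse on generators, again using normality of $\delta$. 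I expect this density-and-faithfulness step to be the only real obstacle; everything else is bookkeeping.

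Finally, with $\theta\colon A\rtimes_{\alpha,r}G\to C$ the isomorphism just produced, a check on the two families of generators shows $\theta$ is $\what\alpha^n$--$\delta$ equivariant --- $\theta(i_A^r(a))=a$ satisfies $\delta(a)=a\otimes1$ by \eqref{fix}, matching $\what\alpha^n(i_A^r(a))=i_A^r(a)\otimes1$, while $\theta(i_G^r(s))=V_s$ satisfies $\delta(V_s)=V_s\otimes s$ by equivariance of $V$, matching $\what\alpha^n(i_G^r(s))=i_G^r(s)\otimes s$ --- and $\theta\circ i_G^r=V$ by construction. For the uniqueness assertion, given another action $(B,\beta)$ and a $\what\beta^n$--$\delta$ equivariant isomorphism $\sigma\colon B\rtimes_{\beta,r}G\to C$ with $\sigma\circ i_G^r=V$, set $\Psi=\sigma^{-1}\circ\theta$; then $\Psi$ is a $\what\alpha^n$--$\what\beta^n$ equivariant isomorphism with $\bar\Psi\circ i_G^r=i_G^r$. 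Because Landstad's conditions \eqref{fix}--\eqref{continuous} are phrased entirely in data that $\Psi$ intertwines, $\bar\Psi$ carries the subalgebra of $M(A\rtimes_{\alpha,r}G)$ cut out by those conditions onto the corresponding subalgebra of $M(B\rtimes_{\beta,r}G)$; by the ``in fact'' clause applied to each crossed product these are the copies $i_A^r(A)$ and $i_B^r(B)$, so $\phi:=(i_B^r)^{-1}\circ\bar\Psi\circ i_A^r\colon A\to B$ is a well-defined isomorphism. It is $\alpha$--$\beta$ equivariant because $\Psi$ intertwines the $G$-embeddings and $\alpha,\beta$ are conjugation by those, and comparing $\phi\rtimes_r G$ with $\Psi$ on generators gives $\phi\rtimes_r G=\Psi$, whence $\sigma\circ(\phi\rtimes_r G)=\theta$.
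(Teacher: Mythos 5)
Your proposal is sound, and it is worth saying up front that the paper does not actually prove \thmref{ldar}: it presents the statement as a reformulation of \cite[Theorem~3.1]{ldfull} together with an addendum from \cite[Theorem~3]{lan:dual}, reserving its only written-out argument for the full-crossed-product analogue (\thmref{lda}), where the Landstad-conditions characterization is claimed to be new. Your reconstruction agrees with that division of labor: the only-if direction by transporting $\what\alpha^n$ and $i_G^r$ along $\theta$, the construction of $A$ from conditions \eqref{fix}--\eqref{continuous}, the covariance of $(\iota,V)$, and the generator checks for equivariance are all correct and are exactly the ``bookkeeping'' the paper leaves implicit, while the genuinely hard step --- that $\iota\times V$ kills $\ker\Lambda^\alpha$ and descends to an isomorphism onto $C$ --- is deferred to the same two references the paper cites. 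The one point to be careful about is in your uniqueness argument: you assert that ``the `in fact' clause applied to each crossed product'' identifies the subalgebras of $M(A\rtimes_{\alpha,r}G)$ and $M(B\rtimes_{\beta,r}G)$ cut out by Landstad's conditions with $i_A^r(A)$ and $i_B^r(B)$. As literally stated, the ``in fact'' clause only says the Landstad algebra has reduced crossed product isomorphic to the given algebra; the equality with the canonical copy of the coefficient algebra is the reduced analogue of \corref{i_A(A)}, which the paper obtains for full crossed products only via the crossed-product-duality argument at the end of the proof of \thmref{lda} (the inclusion $i_B^r(B)\subseteq B'$ is easy, but the reverse inclusion needs that extra step, or a direct appeal to \cite[Theorem~3]{lan:dual}, which is where the paper's uniqueness clause comes from in the first place). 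With that one identification either cited or proved by the same $\pi\otimes\id$ trick, your derivation of the uniqueness clause from the Landstad-conditions description is a legitimate alternative to simply quoting Landstad's addendum, and everything else checks out.
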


In \thmref{lda} below we give a parallel version of \thmref{ldar} for full crossed products.
Some of the facts are
contained in 
\cite{ldfull} and
\cite{clda}.
The characterization in terms of Landstad's conditions seems to be new, however.

\begin{thm}[Landstad duality for full crossed products]\label{lda}
Let $C$ be a $C^*$-algebra and $G$ a locally compact group.
Then there exist an action $(A,\alpha)$ and an isomorphism $\theta\colon A\rtimes_\alpha G\to C$
if and only if
there exist a maximal coaction $\delta$ of $G$ on $C$
and a $\delta_G-\delta$ equivariant nondegenerate homomorphism $V\colon C^*(G)\to M(C)$.

Moreover, given $\delta$ and $V$ as above,
the action $(A,\alpha)$ and the isomorphism $\theta$ can be chosen such that
$\theta$ is $\what\alpha-\delta$ equivariant and
$\theta\circ i_G=V$;
with such a choice,
if $(B,\beta)$ is any action and
$\sigma\colon B\rtimes_\beta G\to C$ is a $\what\beta-\delta$ equivariant isomorphism such that $\sigma\circ i_G=V$,
then there exists an $\alpha-\beta$ equivariant isomorphism $\phi\colon A\to B$ such that
$\sigma\circ (\phi\rtimes G)=\theta$.

In fact,
we can take $A$ to be the $C^*$-subalgebra of $M(C)$
defined as all elements $a\in M(C)$ satisfying
Landstad's conditions \eqref{fix}--\eqref{continuous},
and we can let $\alpha$ be the restriction to $A$ of
\(the extension to $M(C)$ of\)
the inner action $\ad V$.
Then, letting $\iota\colon A\to M(C)$ be the inclusion map,
the pair $(\iota,V)$ is a covariant homomorphism of $(A,\alpha)$ in $M(C)$,
whose integrated form is an isomorphism $A\rtimes_\alpha G\simeq C$.
\end{thm}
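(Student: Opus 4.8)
The plan is to dispatch the ``only if'' direction by hand, to obtain the ``if'' direction and the concrete Landstad-conditions description simultaneously, and to deduce the \emph{moreover} uniqueness clause from the canonical case; the whole point is to run the argument for reduced crossed products from \thmref{ldar} with ``reduced'' replaced by ``full'' and ``normal coaction'' by ``maximal coaction''. For ``only if'', given an isomorphism $\theta\colon A\rtimes_\alpha G\to C$ I would transport the dual structure, setting $\delta:=(\theta\otimes\id)\circ\what\alpha\circ\theta^{-1}$ and $V:=\bar\theta\circ i_G^\alpha$. Then $\delta$ is maximal because $\what\alpha$ is maximal (\secref{normalization}) and maximality is invariant under conjugacy of coactions, and $V$ is a nondegenerate $\delta_G-\delta$ equivariant homomorphism because $i_G^\alpha$ is nondegenerate and $\delta_G-\what\alpha$ equivariant (\secref{act-coact}).

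For the converse I would work directly with the concrete description. Let $\AA$ be the set of all $a\in M(C)$ satisfying \eqref{fix}--\eqref{continuous}; a routine verification shows that $\AA$ is a $C^*$-subalgebra of $M(C)$, and conditions \eqref{fix} and \eqref{continuous} show that $\ad V$ leaves $\AA$ invariant and restricts there to a strongly continuous action $\alpha$ of $G$. With $\iota\colon\AA\hookrightarrow M(C)$ the inclusion, the pair $(\iota,V)$ is then manifestly covariant for $(\AA,\alpha)$, so there is an integrated form $\iota\times V\colon\AA\rtimes_\alpha G\to M(C)$, and the crux is to show it is an isomorphism onto $C$. Surjectivity (with range $C$) I would get from the Landstad-type spanning identity $C=\clspn\{\,aV(f):a\in\AA,\ f\in C_c(G)\,\}$, proved just as in Landstad's original argument, where the nondegeneracy conditions built into the coaction $\delta$ are what make it work. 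Injectivity is the real obstacle; I would deduce it from \thmref{ldar} by passing to the normalization. Let $\eta\colon C\to C^n$ be the normalizing surjection and $V^n:=\bar\eta\circ V$. One checks that $\bar\eta$ restricts to an isomorphism of $\AA$ onto the Landstad algebra of $(C^n,\delta^n,V^n)$ (intertwining $\ad V$ with $\ad V^n$), and that under this the map $\iota\times V$ is carried to the normalization of the reduced-crossed-product isomorphism provided by \thmref{ldar}; since $\what\alpha$ and $\delta$ are both maximal and a homomorphism of maximal coactions is an isomorphism exactly when its normalization is (\secref{normalization}), $\iota\times V$ is an isomorphism. Taking $\theta:=\iota\times V$ and relabeling $\AA$ as $A$ and $\ad V|_\AA$ as $\alpha$ then yields the ``if'' direction and the last paragraph of the statement; moreover $\theta\circ i_A^\alpha=\iota$ and $\theta\circ i_G^\alpha=V$ hold automatically, and a generator computation using \eqref{fix} and the $\delta_G-\delta$ equivariance of $V$ shows $\theta$ is $\what\alpha-\delta$ equivariant, which is the first half of the \emph{moreover} clause.

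There remains the uniqueness statement. Given an action $(B,\beta)$ and a $\what\beta-\delta$ equivariant isomorphism $\sigma\colon B\rtimes_\beta G\to C$ with $\sigma\circ i_G^\beta=V$, I would check --- again a short generator computation, now using the equivariance of $\sigma$, the relation $\sigma\circ i_G^\beta=V$, and strong continuity of $\beta$ --- that $\bar\sigma\circ i_B^\beta$ maps $B$ into $\AA$; conversely, the classical fact that the Landstad algebra of the canonical dual system $(B\rtimes_\beta G,\what\beta,i_G^\beta)$ is exactly $i_B^\beta(B)$ shows that $\bar\sigma\circ i_B^\beta$ is a $\beta-\alpha$ equivariant isomorphism of $B$ onto $\AA=A$. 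Its inverse is the required $\phi\colon A\to B$, and $\sigma\circ(\phi\rtimes G)=\theta$ because both sides agree on $i_A^\alpha$ and on $i_G^\alpha$. (This uniqueness is also a consequence of the categorical Landstad duality of \cite{clda}.) In summary, apart from the spanning identity and the normalization bookkeeping, the single hard step is the injectivity of $\iota\times V$, that is, the assertion that Landstad's conditions \eqref{fix}--\eqref{continuous} do not admit any ``extra'' multipliers beyond those coming from $A$.
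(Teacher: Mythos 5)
Your outline diverges substantially from the paper's proof, and the divergence sits exactly where the difficulty is hidden. The paper does not reprove the first two paragraphs at all: it cites \cite[Theorem~3.2]{ldfull} (with \cite[Remark~5.2]{clda}) for the existence of \emph{some} subalgebra $A\subset M(C)$ on which $\ad V$ restricts to an action with $\iota_A\times V\colon A\rtimes_\alpha G\variso C$, and its entire new content is the identification of that $A$ with the algebra $B$ of multipliers satisfying \eqref{fix}--\eqref{continuous}. For that identification the paper argues: $A\subset B$; the surjection $\iota_B\times V$ is injective because $\iota_B$ is, by \cite[Corollary~4.4]{ldfull}; hence the inclusion $\pi\colon A\to B$ induces an isomorphism $\pi\rtimes G$ by a two-out-of-three argument; and then crossing by the dual coactions and applying Raeburn's full crossed-product duality shows $\pi\otimes\id$, hence $\pi$, is an isomorphism, so $A=B$. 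Your proposal replaces all of this with two steps you describe as routine, and neither is.

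First, the spanning identity $C=\clspn\{aV(f)\}$ with $a$ running over the Landstad algebra $\AA$: Landstad's averaging argument lives in the reduced/normal setting, and transporting it to a full maximal coaction is precisely the content of \cite{ldfull}, not a re-run of the same computations --- one must manufacture enough multipliers satisfying \eqref{fix} for a \emph{full} coaction, where the slice-map manipulations behave differently. Second, and more seriously, the assertion that $\bar\eta$ restricts to an isomorphism of $\AA$ \emph{onto} the Landstad algebra of $(C^n,\delta^n,V^n)$ is essentially the theorem in disguise. Injectivity of $\bar\eta|_{\AA}$ can be checked (realize $\eta$ via $(\id\otimes\lambda)\circ\delta$ and use \eqref{fix}), and $\bar\eta(\AA)$ does land inside the reduced Landstad algebra; but surjectivity onto it is exactly the statement that $\AA$ is ``large enough,'' i.e.\ the spanning identity again. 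Without it, the normalization of $\iota\times V$ need not be surjective, and the principle that a morphism of maximal coactions is an isomorphism if and only if its normalization is gives you nothing. So your architecture does yield the \emph{injectivity} of $\iota\times V$ (this is a legitimate alternative to the paper's use of \cite[Corollary~4.4]{ldfull}), but the size/surjectivity question, which is the actual novelty of the third paragraph, is left unaddressed. To close the gap, either import \cite[Theorem~3.2 and Corollary~4.4]{ldfull} as the paper does and add the biduality comparison of $A$ with $\AA$, or supply a genuine full-coaction version of Landstad's averaging argument.
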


\begin{proof}
The first two paragraphs are
\cite[Theorem~3.2]{ldfull}, 
modulo the slight improvement indicated in \cite[Remark~5.2]{clda}.
We must prove the third paragraph, involving Landstad's conditions,
and we combine techniques of the proofs of \cite[Lemma~3.1]{lan:dual} and \cite[Proposition~3.2]{qlandstad}:
It follows from the second paragraph of the theorem that
there is a $C^*$-subalgebra $A$ of $M(C)$ such that
$\ad V$ gives an action $\alpha$ of $G$ on $A$, and,
letting $\iota_A\colon A\to M(C)$ be the inclusion,
the pair $(\iota_A,V)$ is a covariant homomorphism of $(A,\alpha)$ in $M(C)$
whose integrated form is an isomorphism of $A\rtimes_\alpha G$ onto $C$.

Let
\[
B=\{a\in M(C):\text{Landstad's conditions \eqref{fix}--\eqref{continuous} hold}\}.
\]
Note that $A\subset B$.
Claim: $B$ is a $C^*$-subalgebra of $M(C)$.
Obviously the set of elements satisfying \eqref{fix} is a $C^*$-subalgebra.
For fixed $f\in C_c(G)$,
the set of elements $a\in M(C)$ such that $aV(f),V(f)a\in C$ is a closed subspace that is closed under adjoints,
and if it contains both $a$ and $b$ then $abV(f)\in C$ since $bV(f)\in C$,
and $V(f)ab\in C$ since $V(f)a\in C$.
Thus the claim is verified.

Note that $\ad V$ gives an action $\beta$ of $G$ on $B$, and,
letting $\iota_B\colon B\to M(C)$ be the inclusion,
the pair $(\iota_B,V)$ is a covariant homomorphism of $(B,\beta)$ in $M(C)$
whose integrated form $\iota_B\times V\colon B\rtimes_\beta G\to C$ is a $\what\beta-\delta$ equivariant surjective homomorphism.
Since $\iota_B$ is injective, by \cite[Corollary~4.4]{ldfull} 
$\iota_B\times V$ is an isomorphism.

Now let $\pi\colon A\to B$ be the inclusion.
Then $\pi$ is $\alpha-\beta$ equivariant,
and the induced homomorphism $\pi\rtimes G\colon A\rtimes_\alpha G\to B\rtimes_\beta G$ is an isomorphism
because
the following diagram commutes:
\[
\xymatrix@C+30pt{
A\rtimes_\alpha G \ar[r]^{\pi\rtimes G} \ar[dr]_{\iota_A\times V}^\simeq
&B\rtimes_\beta G \ar[d]^{\iota_B\times V}_\simeq
\\
&C.
}
\]
Taking crossed products by the dual coactions and applying crossed-product duality
(the statement of
\cite[Theorem~7]{rae:full}
is perhaps most suitable for the present purpose),
we get a commutative diagram
\[
\xymatrix@C+30pt{
A\rtimes_\alpha G\rtimes_{\what\alpha} G \ar[r]^-{\pi\rtimes G\rtimes G}_-\simeq \ar[d]^\simeq
&B\rtimes_\beta G\rtimes_{\what\beta} G \ar[d]_\simeq
\\
A\otimes \KK(L^2(G)) \ar[r]_-{\pi\otimes\id}
&B\otimes \KK(L^2(G)).
}
\]
Thus $\pi\otimes\id$, and hence $\pi$ itself, must be an isomorphism, and therefore $A=B$.
\end{proof}

The following definition applies to both \thmref{ldar} and \thmref{lda}.

\begin{defn}\label{fix coaction}
Let $\delta$ be a coaction of $G$ on a $C^*$-algebra $C$,
and let $V\colon C^*(G)\to M(C)$ be a $\delta_G-\delta$ equivariant nondegenerate homomorphism.
Then we call the triple $(C,\delta,V)$ an \emph{equivariant coaction}.
If $\delta$ is normal or maximal,
we denote the set of elements of $M(C)$ satisfying Landstad's conditions \eqref{fix}--\eqref{continuous} by $C^{\delta,V}$, or just $C^\delta$ if $V$ is understood, and we call this the \emph{generalized fixed-point algebra} of the equivariant coaction $(C,\delta,V)$.
Further, we write $\alpha^V$ for the action $\ad V$ on $C^{\delta,V}$.
\end{defn}

\begin{ex}
Starting with an action $(A,\alpha)$,
let $C=A\rtimes_\alpha G$, $\delta=\what\alpha$, and $V=i_G$.
Then
\[
i_A\colon A\to (A\rtimes_\alpha G)^{\what\alpha,i_G}\subset M(A\rtimes_\alpha G)
\]
is an $\alpha-\alpha^{i_G}$ equivariant isomorphism.
\end{ex}

\thmref{lda} immediately implies the following characterization of the image of $A$ in the multipliers of the full crossed product:

\begin{cor}\label{i_A(A)}
Let $(A,\alpha)$ be an action, and let $m\in M(A\rtimes_\alpha G)$. Then $m\in i_A(A)$ if and only if
\begin{enumerate}
\item $\what\alpha(m)=m\otimes 1$,
\item $mi_G(f),i_G(f)m\in A\rtimes_\alpha G$ for all $f\in C_c(G)$, and
\item $s\mapsto \ad i_G(s)(m)$ is norm continuous from $G$ to $M(A\rtimes_\alpha G)$.
\end{enumerate}
\end{cor}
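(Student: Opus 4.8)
The plan is to recognize conditions (1)--(3) as exactly Landstad's conditions \eqref{fix}--\eqref{continuous} for one particular equivariant coaction, and then to invoke \thmref{lda} (or, more directly, the Example following \defnref{fix coaction}) to identify the resulting generalized fixed-point algebra with $i_A(A)$.

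Concretely, I would apply the apparatus of \thmref{lda} and \defnref{fix coaction} to the triple $(C,\delta,V)=(A\rtimes_\alpha G,\what\alpha,i_G)$. This is a genuine equivariant coaction: the dual coaction $\what\alpha$ on the full crossed product is maximal (\subsecref{normalization}), and $i_G\colon C^*(G)\to M(A\rtimes_\alpha G)$ is a nondegenerate homomorphism which is $\delta_G-\what\alpha$ equivariant (\subsecref{act-coact}). Substituting $\delta=\what\alpha$ and $V=i_G$ into Landstad's conditions shows that the generalized fixed-point algebra $(A\rtimes_\alpha G)^{\what\alpha,i_G}$ is precisely the set of $m\in M(A\rtimes_\alpha G)$ satisfying (1), (2) and (3): \eqref{fix} reads $\what\alpha(m)=m\otimes 1$, \eqref{in C} reads $mi_G(f),i_G(f)m\in A\rtimes_\alpha G$ for all $f\in C_c(G)$, and \eqref{continuous}, interpreted in the norm of $M(A\rtimes_\alpha G)$, reads that $s\mapsto\ad i_G(s)(m)$ is continuous.

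It then remains only to note that $(A\rtimes_\alpha G)^{\what\alpha,i_G}=i_A(A)$, which is exactly the content of the Example following \defnref{fix coaction}, namely that $i_A$ is an equivariant isomorphism of $A$ onto this fixed-point algebra. (Equivalently, one may read this off from the uniqueness clause of \thmref{lda} applied with $(B,\beta)=(A,\alpha)$ and $\sigma=\id$, which is $\what\alpha-\what\alpha$ equivariant and satisfies $\sigma\circ i_G=i_G$.) Chaining the two identifications yields $m\in i_A(A)$ if and only if (1)--(3) hold, as claimed.

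I do not expect a genuine obstacle here, since the statement is a direct specialization of \thmref{lda}. Should one want a self-contained check of the ``only if'' direction, the sole computation worth recording is condition (3): for $m=i_A(a)$, covariance gives $\ad i_G(s)(i_A(a))=i_A(\alpha_s(a))$, which is norm continuous in $s$ because $\alpha$ is strongly continuous and $i_A$ is norm decreasing; (1) is immediate from the definition of $\what\alpha$ on generators, and (2) follows since $i_A(a)i_G(f)$ and $i_G(f)i_A(a)$ already lie in the dense $*$-subalgebra $C_c(G,A)$ of $A\rtimes_\alpha G$.
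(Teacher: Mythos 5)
Your proposal is correct and takes the same route the paper intends: the paper gives no separate argument for \corref{i_A(A)}, declaring it an immediate consequence of \thmref{lda}, and the identification $(A\rtimes_\alpha G)^{\what\alpha,i_G}=i_A(A)$ that you supply (via the Example preceding the corollary, or equivalently via the uniqueness clause of \thmref{lda} with $\sigma=\id$) is exactly the missing link being taken for granted there. Your explicit verification of the ``only if'' direction is consistent with, and slightly more detailed than, what the paper records.
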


We record a particular consequence of the above that we will need later:

\begin{cor}\label{LD}
Suppose $(C,\delta,V)$ is an equivariant maximal coaction
and $\phi\colon A\to C^{\delta,V}$ is an isomorphism.
Then there exist an action $\alpha$ of $G$ on $A$
and an $\what\alpha-\delta$ equivariant isomorphism
\[
\Theta\colon A\rtimes_\alpha G\variso C
\]
such that
\begin{align*}
\Theta\circ i_G&=V\\
\Theta\circ i_A&=\phi.
\end{align*}
\end{cor}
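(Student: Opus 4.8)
The plan is to apply \thmref{lda} to the equivariant maximal coaction $(C,\delta,V)$ and then transport the resulting action along $\phi$. By \thmref{lda}, $\alpha^V=\ad V$ restricts to an action of $G$ on the generalized fixed-point algebra $C^{\delta,V}$, and, writing $\iota\colon C^{\delta,V}\to M(C)$ for the inclusion, the pair $(\iota,V)$ is a covariant homomorphism of $(C^{\delta,V},\alpha^V)$ in $M(C)$ whose integrated form $\iota\times V\colon C^{\delta,V}\rtimes_{\alpha^V}G\to C$ is an isomorphism; moreover, by the second paragraph of \thmref{lda} (of which this concrete choice of action is a realization), this isomorphism is $\widehat{\alpha^V}-\delta$ equivariant, and by the defining property of the integrated form $(\iota\times V)\circ i_G=V$ and $(\iota\times V)\circ i_A=\iota$, where we regard $C^{\delta,V}$ as a subalgebra of $M(C)$ via $\iota$.

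Next I would pull the action back: define $\alpha_s=\phi^{-1}\circ\alpha^V_s\circ\phi$ for $s\in G$. Being conjugate to $\alpha^V$ via the isomorphism $\phi$, this is an action of $G$ on $A$, and $\phi\colon A\to C^{\delta,V}$ is $\alpha-\alpha^V$ equivariant by construction. Consequently $\phi\rtimes G\colon A\rtimes_\alpha G\to C^{\delta,V}\rtimes_{\alpha^V}G$ is an isomorphism, it is $\what\alpha-\widehat{\alpha^V}$ equivariant (as recorded in \secref{act-coact}), and it intertwines the canonical covariant homomorphisms, so that $(\phi\rtimes G)\circ i_A^\alpha=i_{C^{\delta,V}}^{\alpha^V}\circ\phi$ and $(\phi\rtimes G)\circ i_G^\alpha=i_G^{\alpha^V}$.

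Finally I would set $\Theta=(\iota\times V)\circ(\phi\rtimes G)$. As a composite of isomorphisms this is an isomorphism $A\rtimes_\alpha G\variso C$, and as a composite of equivariant homomorphisms it is $\what\alpha-\delta$ equivariant. The two required identities then fall out of the preceding bookkeeping: $\Theta\circ i_G=(\iota\times V)\circ(\phi\rtimes G)\circ i_G^\alpha=(\iota\times V)\circ i_G^{\alpha^V}=V$ and $\Theta\circ i_A=(\iota\times V)\circ(\phi\rtimes G)\circ i_A^\alpha=(\iota\times V)\circ i_{C^{\delta,V}}^{\alpha^V}\circ\phi=\iota\circ\phi=\phi$. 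The argument is entirely formal; the only point calling for any care is verifying that the concrete Landstad isomorphism $\iota\times V$ furnished by \thmref{lda} is $\widehat{\alpha^V}-\delta$ equivariant and sends $i_G$ to $V$, and this is exactly what the ``moreover'' part of \thmref{lda} guarantees, so there is no genuine obstacle.
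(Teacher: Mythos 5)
Your argument is correct and is essentially the intended one: the paper states \corref{LD} without proof as a direct consequence of \thmref{lda}, and your route --- taking the concrete Landstad action $(C^{\delta,V},\alpha^V)$ with the isomorphism $\iota\times V$ and then transporting it along $\phi$ --- is exactly the standard way to fill in that implicit argument. The one point you flag, that the concrete isomorphism $\iota\times V$ of the third paragraph of \thmref{lda} realizes the equivariance and $i_G\mapsto V$ properties of the second paragraph, is indeed what the theorem's ``In fact'' clause is asserting, so there is no gap.
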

When we wish to appeal to \corref{LD} or any other aspect of the above discussion, we will just say ``by classical Landstad duality''.

\subsection{Classical Landstad duality for coactions}\label{class-coact}

The following result is a reformulation of
\cite[Theorem~3.3 and Proposition~3.2]{qlandstad}.

\begin{thm}
\label{ldcn}
Let $C$ be a $C^*$-algebra and $G$ a locally compact group.
Then there exist a normal coaction $(A,\delta)$ and an isomorphism $\theta\colon A\rtimes_\delta G\to C$
if and only if
there exist an action $\alpha$ of $G$ on $C$
and a $\rt-\alpha$ equivariant nondegenerate homomorphism $\mu\colon C_0(G)\to M(C)$.

Moreover, given $\alpha$ and $\mu$ as above,
the coaction $(A,\delta)$ and the isomorphism $\theta$ can be chosen such that
$\theta$ is $\what\delta-\alpha$ equivariant and
$\theta\circ j_G=\mu$;
with such a choice,
if $(B,\epsilon)$ is any normal coaction and
$\sigma\colon B\rtimes_\epsilon G\to C$ is a $\what\epsilon-\alpha$ equivariant isomorphism,
then there exists a $\delta-\epsilon$ equivariant isomorphism $\phi\colon A\to B$ such that
$\sigma\circ (\phi\rtimes G)=\theta$.

In fact,
we can take $A$ to be the unique $C^*$-subalgebra of $M(C)$ characterized by the 
following conditions,
modeled upon \cite[(3.1)-(3.3) in Proposition~3.2]{qlandstad}.
\begin{align}
\label{restrict} &\text{$\ad \mu$ restricts to a normal coaction on $A$;}
\\
\label{generate} &\clspn\{A\mu(C_0(G))\}=C;
\\
\label{fixed} &\text{$\alpha_s(a)=a$ for all $s\in G$ and $a\in A$,}
\end{align}
and we can let $\delta$ be the restriction to $A$ of
\(the extension to $M(C)$ of\)
the inner coaction $\ad \mu$.
Then, letting $\iota\colon A\to M(C)$ be the inclusion map,
the pair $(\iota,\mu)$ is a covariant homomorphism of $(A,\delta)$ in $M(C)$,
whose integrated form is an isomorphism $A\rtimes_\delta G\simeq C$.
\end{thm}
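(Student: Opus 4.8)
The plan is to follow the pattern of the proof of \thmref{lda}. The first two paragraphs merely restate \cite[Theorem~3.3 and Proposition~3.2]{qlandstad}, up to cosmetic changes of notation, so the real task is the third paragraph: identifying the generalized fixed-point algebra as the unique $C^*$-subalgebra of $M(C)$ satisfying \eqref{restrict}--\eqref{fixed}.

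First I would recall from \cite[Proposition~3.2]{qlandstad} that, given the action $\alpha$ on $C$ and the $\rt-\alpha$ equivariant nondegenerate homomorphism $\mu\colon C_0(G)\to M(C)$, there is a distinguished $C^*$-subalgebra $A\subseteq M(C)$ on which $\ad\mu$ restricts to a normal coaction $\delta$, and for which the pair $(\iota_A,\mu)$, with $\iota_A\colon A\hookrightarrow M(C)$ the inclusion, is a covariant homomorphism of $(A,\delta)$ whose integrated form $\iota_A\times\mu\colon A\rtimes_\delta G\to C$ is an isomorphism. I would then check directly that this $A$ satisfies \eqref{restrict}--\eqref{fixed}: \eqref{restrict} holds by construction; \eqref{generate} follows from surjectivity of $\iota_A\times\mu$ together with the spanning formula $A\rtimes_\delta G=\clspn\{j_A(A)j_G(C_0(G))\}$; and \eqref{fixed} holds because $\iota_A\times\mu$ is $\what\delta-\alpha$ equivariant, so that under this isomorphism $\alpha$ corresponds to the dual action $\what\delta$, which fixes $j_A(A)$ pointwise.

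For the uniqueness I would take any $C^*$-subalgebra $B\subseteq M(C)$ satisfying \eqref{restrict}--\eqref{fixed} and let $\epsilon$ denote the normal coaction to which $\ad\mu$ restricts on $B$. Then $(\iota_B,\mu)$, with $\iota_B\colon B\hookrightarrow M(C)$ the inclusion, is a covariant homomorphism of $(B,\epsilon)$ in $M(C)$; its integrated form $\iota_B\times\mu\colon B\rtimes_\epsilon G\to M(C)$ is $\what\epsilon-\alpha$ equivariant (checked on the generators $j_B(B)$ and $j_G(C_0(G))$ using \eqref{fixed} and the $\rt-\alpha$ equivariance of $\mu$, respectively), and its range $\clspn\{\iota_B(B)\mu(C_0(G))\}$ equals $C$ by \eqref{generate}, so it is a surjection onto $C$. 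I would then argue, using that $\epsilon$ is normal and $\iota_B$ is injective, that $\iota_B\times\mu$ is moreover injective --- the coaction analogue of \cite[Corollary~4.4]{ldfull} --- and hence an isomorphism $B\rtimes_\epsilon G\to C$. Applying the naturality assertion of the second paragraph with $\sigma=\iota_B\times\mu$ now produces a $\delta-\epsilon$ equivariant isomorphism $\phi\colon A\to B$ with $(\iota_B\times\mu)\circ(\phi\rtimes G)=\iota_A\times\mu$; evaluating both sides on $j_A(a)$ and using $(\phi\rtimes G)\circ j_A=j_B\circ\phi$ gives $\iota_B(\phi(a))=\iota_A(a)$, that is, $\phi(a)=a$ as elements of $M(C)$, so $A\subseteq B$, and surjectivity of $\phi$ then forces $B=A$.

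The step I expect to be the main obstacle is the injectivity of $\iota_B\times\mu$ --- the coaction counterpart of \cite[Corollary~4.4]{ldfull}, which for actions rests on maximality of the dual coaction on a full crossed product. For a normal coaction $\epsilon$ one instead wants that a covariant homomorphism with injective first coordinate is automatically faithful on $B\rtimes_\epsilon G$; this ought to follow from normality of $\epsilon$ together with the structure of crossed products by normal coactions in \cite{qlandstad}, but isolating exactly the right statement and confirming that it applies to the inner coaction $\epsilon$ on $B$ is the delicate point. A secondary matter requiring care --- needed even in order to invoke \cite[Proposition~3.2]{qlandstad} for the construction of $A$ --- is to verify that the packaged condition \eqref{restrict} is equivalent to the elementwise conditions (3.1)--(3.3) of that proposition, which amounts to unwinding what it means for $\ad\mu$ to restrict to a normal coaction on a $C^*$-subalgebra of $M(C)$.
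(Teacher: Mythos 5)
The paper does not actually prove \thmref{ldcn}: it is presented purely as a reformulation of \cite[Theorem~3.3 and Proposition~3.2]{qlandstad}, with \eqref{restrict}--\eqref{fixed} only ``modeled upon'' Quigg's conditions (3.1)--(3.3), so there is no in-paper argument to measure yours against. Your reconstruction, patterned on the paper's proof of \thmref{lda}, is sound, and in one respect it is simpler than that proof: for uniqueness you invoke the rigidity clause of the second paragraph with $\sigma=\iota_B\times\mu$ and evaluate on generators to get $\phi(a)=a$, whereas the paper's argument for the action analogue \thmref{lda} passes to a second crossed product and applies Imai--Takai/Raeburn duality to conclude $A=B$. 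Your shortcut is legitimate here, and is available precisely because the rigidity statement for coactions does not require $\sigma\circ j_G=\mu$.

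The step you flag as the main obstacle --- faithfulness of $\iota_B\times\mu$ --- is exactly the standard uniqueness theorem for crossed products by normal coactions: a covariant homomorphism $(\pi,\mu)$ of a normal coaction $(B,\epsilon)$ with $\pi$ faithful has faithful integrated form. This is the fact that underlies \cite[Proposition~3.2]{qlandstad} itself (stated there in the reduced-coaction language, where it is one of the preliminary lemmas of Section~2), so the gap is one of citation rather than substance; note also that normality of $\epsilon$ is not merely convenient but forced, since $(\iota_B\times\mu)\circ j_B=\iota_B$ being injective already requires $j_B$ injective. Your secondary worry about reconciling the packaged condition \eqref{restrict} with Quigg's elementwise conditions is fair, but the paper is no more precise on that point than you are.
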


\begin{defn}\label{fix action}
Let $\alpha$ be an action of $G$ on a $C^*$-algebra $C$,
and let $\mu\colon C_0(G)\to M(C)$ be a $\rt-\alpha$ equivariant nondegenerate homomorphism.
Then we call the triple $(C,\alpha,\mu)$ an \emph{equivariant action}.
We denote the 
$C^*$-subalgebra
of $M(C)$ 
characterized by the
conditions \eqref{restrict}--\eqref{fixed} by $C^{\alpha,\mu}$, or just $C^\alpha$ if $\mu$ is understood, and we call this the \emph{generalized fixed-point algebra} of the equivariant action $(C,\alpha,\mu)$.
Further, we write $\delta^\mu$ for the coaction $\ad \mu$ on $C^{\alpha,\mu}$.
\end{defn}

\begin{ex}
Starting with a normal coaction $(A,\delta)$,
let $C=A\rtimes_\delta G$, $\alpha=\what\delta$, and $\mu=j_G$.
Then
\[
j_A\colon A\to (A\rtimes_\delta G)^{\what\delta,j_G}\subset M(A\rtimes_\delta G)
\]
is a $\delta-\delta^{j_G}$ equivariant isomorphism.
\end{ex}

\section{Pedersen's theorem}\label{ped thm}

Theorem~35 of \cite{pedersenexterior}, stated more precisely as \cite[Theorem~0.10]{raeros}, says that 
actions
$(A,\alpha)$ and $(A,\beta)$ of an \emph{abelian} group $G$ are exterior equivalent if and only if there is an $\what\alpha-\what\beta$ equivariant isomorphism $\Phi\colon A\rtimes_\alpha G\to A\rtimes_\beta G$ such that
\[
\Phi\circ i_A^\alpha=i_A^\beta.
\]

Pedersen's arguments carry over to the nonabelian case, except that we have to deal with the dual coaction of $G$ rather than the dual action of $\what G$.
Since we need it, and the coaction version for nonabelian groups does not seem to be readily available for reference in the literature, we include the statement and proof for completeness.

\begin{thm}
\label{pedersen}
Let $\alpha$ and $\beta$ be actions of a locally compact group $G$ on a $C^*$-algebra $A$.
Then $\alpha$ and $\beta$ are exterior equivalent
if and only if there is an $\what\alpha-\what\beta$ equivariant isomorphism $\Phi\colon A\rtimes_\alpha G\to A\rtimes_\beta G$
such that $\Phi\circ i_A^\alpha=i_A^\beta$.

Moreover, there is a bijection between the set of $\beta$-cocycles $u$ for which $\alpha=\ad u\circ\beta$
and the set of $\what\alpha-\what\beta$ equivariant isomorphisms $\Phi\colon A\rtimes_\alpha G\to A\rtimes_\beta G$
for which $\Phi\circ i_A^\alpha=i_A^\beta$,
given for $s\in G$ by
\begin{equation}\label{Phi}
\Phi\circ i_G^\alpha(s)=i_A^\beta(u_s)i_G^\beta(s).
\end{equation}
\end{thm}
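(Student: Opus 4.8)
The plan is to prove both implications by constructing explicit covariant homomorphisms and applying the universal property of the full crossed product, together with the (trivial) uniqueness coming from the fact that the generators $i_A^\beta(A)$ and $i_G^\beta(G)$ densely span $M$-multipliers of $A\rtimes_\beta G$. The harder direction is the ``only if'' direction: given an equivariant isomorphism $\Phi$ with $\Phi\circ i_A^\alpha=i_A^\beta$, we must manufacture a $\beta$-cocycle $u$. The easy direction (``if'') amounts to checking that the formula \eqref{Phi} really does define an equivariant isomorphism, and the bijection statement then follows by showing the two constructions are mutually inverse.

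\textbf{The ``if'' direction.} Suppose $\alpha=\ad u\circ\beta$ for a $\beta$-cocycle $u$. I would first define, for $s\in G$, the element $w_s:=i_A^\beta(u_s)i_G^\beta(s)\in M(A\rtimes_\beta G)$. Using the cocycle identity $u_{st}=u_s\beta_s(u_t)$ together with the covariance relation $i_G^\beta(s)i_A^\beta(a)i_G^\beta(s)^*=i_A^\beta(\beta_s(a))$, one checks that $s\mapsto w_s$ is a strictly continuous homomorphism of $G$ into $UM(A\rtimes_\beta G)$, and that $(i_A^\beta,w)$ is a covariant homomorphism of the system $(A,\alpha)$ — precisely because $w_s i_A^\beta(a)w_s^* = i_A^\beta(u_s\beta_s(a)u_s^*)=i_A^\beta(\alpha_s(a))$. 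Its integrated form gives a homomorphism $\Phi:=i_A^\beta\times w\colon A\rtimes_\alpha G\to M(A\rtimes_\beta G)$ with $\Phi\circ i_A^\alpha=i_A^\beta$ and $\Phi\circ i_G^\alpha(s)=w_s$, which establishes \eqref{Phi}. That $\Phi$ lands in $A\rtimes_\beta G$ and is surjective follows since its range is the closed span of $i_A^\beta(A)w_s$-products, which equals $\clspn\{i_A^\beta(A)i_G^\beta(G)\}=A\rtimes_\beta G$ (the $i_A^\beta(u_s)$ factor is a unitary multiplier and can be absorbed). Injectivity: build the inverse the same way from the $\alpha$-cocycle $s\mapsto\alpha_s(u_s^{-1})$ — or more cleanly, observe that $u$ is a $\beta$-cocycle if and only if $v_s:=\alpha_s(u_s^{*})$ is an $\alpha$-cocycle with $\beta=\ad v\circ\alpha$, so the symmetric construction yields a two-sided inverse. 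Equivariance for the dual coactions is checked on generators: $(\Phi\otimes\id)\what\alpha(i_G^\alpha(s))=(\Phi\otimes\id)(i_G^\alpha(s)\otimes s)=w_s\otimes s$, while $\what\beta(\Phi(i_G^\alpha(s)))=\what\beta(i_A^\beta(u_s)i_G^\beta(s))=(i_A^\beta(u_s)\otimes 1)(i_G^\beta(s)\otimes s)=w_s\otimes s$; on $i_A^\alpha(A)$ both sides give $i_A^\beta(a)\otimes 1$. This uses that $\what\beta$ is a nondegenerate homomorphism extending to multipliers and that $u_s\in M(A)$, so $\what\beta(i_A^\beta(u_s))=i_A^\beta(u_s)\otimes 1$ by the generator formula applied to multipliers.

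\textbf{The ``only if'' direction.} Given $\Phi$ with $\Phi\circ i_A^\alpha=i_A^\beta$ and $(\Phi\otimes\id)\circ\what\alpha=\what\beta\circ\Phi$, I would define, for $s\in G$, the unitary multiplier $u_s:=\Phi(i_G^\alpha(s))\,i_G^\beta(s)^{*}\in UM(A\rtimes_\beta G)$. The key claim is that $u_s$ actually lies in $i_A^\beta(M(A))$, i.e., in the image of the canonical embedding of $M(A)$. To see this, apply $\what\beta$: since $\Phi$ is equivariant, $\what\beta(\Phi(i_G^\alpha(s)))=(\Phi\otimes\id)(\what\alpha(i_G^\alpha(s)))=(\Phi\otimes\id)(i_G^\alpha(s)\otimes s)=\Phi(i_G^\alpha(s))\otimes s$; also $\what\beta(i_G^\beta(s)^*)=i_G^\beta(s)^*\otimes s^{-1}$; hence $\what\beta(u_s)=\Phi(i_G^\alpha(s))i_G^\beta(s)^*\otimes 1=u_s\otimes 1$, so $u_s$ satisfies Landstad condition \eqref{fix} relative to $\what\beta$. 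One then checks $u_s$ also satisfies \eqref{in C} and \eqref{continuous} — for \eqref{in C}, $u_s i_G^\beta(f)=\Phi(i_G^\alpha(s))i_G^\beta(s)^*i_G^\beta(f)=\Phi(i_G^\alpha(s))i_G^\beta(\lt_s f)\in A\rtimes_\beta G$ since $\Phi$ is a homomorphism into $A\rtimes_\beta G$ and $i_G^\beta(s)^*i_G^\beta(f)$ is a right translate, with a symmetric argument on the other side; for \eqref{continuous}, $\ad i_G^\beta(s)(u_t)$ is continuous in $t$ because $\Phi$, $i_G^\alpha$ and $i_G^\beta$ are all strictly continuous. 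By \corref{i_A(A)} (Landstad duality for the full crossed product), $u_s\in i_A^\beta(M(A))$ — more precisely, $i_A^\beta$ extends to an isomorphism of $M(A)$ onto the multiplier-level Landstad algebra, so there is a unique $\tilde u_s\in M(A)$ with $i_A^\beta(\tilde u_s)=u_s$. Unitarity of $u_s$ and injectivity of $i_A^\beta$ on $M(A)$ force $\tilde u_s$ to be unitary.

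\textbf{Verifying the cocycle identity and closing the loop.} With $\tilde u_s\in UM(A)$ defined, I would compute $i_A^\beta(\tilde u_{st})=\Phi(i_G^\alpha(st))i_G^\beta(st)^{*}=\Phi(i_G^\alpha(s))\Phi(i_G^\alpha(t))i_G^\beta(t)^*i_G^\beta(s)^*$, and insert $i_G^\beta(s)^*i_G^\beta(s)=1$ in the middle to rewrite this as $\Phi(i_G^\alpha(s))i_G^\beta(s)^*\cdot i_G^\beta(s)\Phi(i_G^\alpha(t))i_G^\beta(t)^*i_G^\beta(s)^* = i_A^\beta(\tilde u_s)\cdot i_G^\beta(s)i_A^\beta(\tilde u_t)i_G^\beta(s)^* = i_A^\beta(\tilde u_s)i_A^\beta(\beta_s(\tilde u_t))=i_A^\beta(\tilde u_s\beta_s(\tilde u_t))$, using covariance. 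Injectivity of $i_A^\beta$ on $M(A)$ gives $\tilde u_{st}=\tilde u_s\beta_s(\tilde u_t)$, so $\tilde u$ is a $\beta$-cocycle. Strict continuity of $s\mapsto\tilde u_s$ transfers from that of $u_s$ via the homeomorphism $i_A^\beta$ onto its image. Finally $\alpha_s=\ad\tilde u_s\circ\beta_s$: on $i_A^\beta(A)$ we have $i_A^\beta(\alpha_s(a))=\Phi(i_A^\alpha(\alpha_s(a)))=\Phi(i_G^\alpha(s)i_A^\alpha(a)i_G^\alpha(s)^*)=\Phi(i_G^\alpha(s))i_A^\beta(a)\Phi(i_G^\alpha(s))^*=i_A^\beta(\tilde u_s)i_G^\beta(s)i_A^\beta(a)i_G^\beta(s)^*i_A^\beta(\tilde u_s)^*=i_A^\beta(\tilde u_s\beta_s(a)\tilde u_s^*)$, and injectivity of $i_A^\beta$ finishes it. By construction $\Phi\circ i_G^\alpha(s)=u_s i_G^\beta(s)=i_A^\beta(\tilde u_s)i_G^\beta(s)$, which is \eqref{Phi}. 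For the bijection, the two assignments $u\mapsto\Phi$ and $\Phi\mapsto\tilde u$ are inverse to each other: starting from $u$, the constructed $\Phi$ satisfies $\Phi(i_G^\alpha(s))i_G^\beta(s)^*=i_A^\beta(u_s)$, recovering $u$; conversely, formula \eqref{Phi} shows any $\Phi$ with $\Phi\circ i_A^\alpha=i_A^\beta$ is determined on generators by its associated $u$, hence uniquely determined. \textbf{The main obstacle} is the identification $u_s\in i_A^\beta(M(A))$: this is exactly where Landstad duality (via \corref{i_A(A)} applied at the multiplier level) does the real work, replacing Pedersen's Fourier-analytic argument for abelian $G$ — one must be a little careful that \corref{i_A(A)}, stated for $m\in M(A\rtimes_\alpha G)$ characterizing $i_A(A)$, indeed upgrades to characterizing $i_A^\beta(M(A))$ inside $M(A\rtimes_\beta G)$, which follows from the standard fact that $i_A^\beta$ extends to a (strictly continuous, injective) homomorphism $M(A)\to M(A\rtimes_\beta G)$ compatible with the Landstad conditions.
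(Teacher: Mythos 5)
Your overall strategy is the same as the paper's: the forward direction integrates the covariant pair $\bigl(i_A^\beta,\,s\mapsto i_A^\beta(u_s)i_G^\beta(s)\bigr)$, and the backward direction sets $u_s=\Phi(i_G^\alpha(s))i_G^\beta(s)^*$ and tries to place it in $i_A^\beta(M(A))$ via Landstad's conditions and \corref{i_A(A)}. But there is a genuine gap precisely at the point you flag as needing care: \corref{i_A(A)} does \emph{not} upgrade to a characterization of $i_A^\beta(M(A))$ by the same three conditions. Landstad's condition \eqref{in C} already fails for the multiplier $1=i_A^\beta(1)$ when $A$ is nonunital (since $i_G^\beta(f)\notin A\rtimes_\beta G$ in general), so the three conditions cannot detect elements of $i_A^\beta(M(A))\setminus i_A^\beta(A)$. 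Concretely, your verification of (ii) for $u_s$ yields $u_si_G^\beta(f)=\Phi(i_G^\alpha(s))i_G^\beta(\lt_sf)$, which is a product of two \emph{multipliers} of $A\rtimes_\beta G$ --- $\Phi(i_G^\alpha(s))$ is the canonical extension of $\Phi$ applied to a multiplier, so it lives in $M(A\rtimes_\beta G)$, not in $A\rtimes_\beta G$ --- and there is no reason for this product to lie in the crossed product; indeed, showing that it does is essentially equivalent to the claim you are trying to prove. Your verification of (iii) has the same defect: $t\mapsto \ad i_G^\beta(t)(u_s)$ is automatically \emph{strictly} continuous, but \eqref{continuous} demands \emph{norm} continuity, which fails for general multipliers.

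The repair is the detour the paper takes: since $i_A^\beta(M(A))=M(i_A^\beta(A))$, it suffices to show that $u_s$ and $u_s^*$ multiply $i_A^\beta(A)$ into itself, and one verifies Landstad's conditions for the \emph{products} $y=u_si_A^\beta(a)$ and $i_A^\beta(a)u_s$ rather than for $u_s$ alone. The extra factor $i_A^\beta(a)$ rescues both problematic steps: $y\,i_G^\beta(f)=u_s\bigl(i_A^\beta(a)i_G^\beta(f)\bigr)$ is a multiplier times an element of $A\rtimes_\beta G$ (for the other order one first rewrites $y=i_A^\beta\bigl(\alpha_s\circ\beta_{s^{-1}}(a)\bigr)u_s$), and norm continuity in (iii) comes from pairing the bounded, strictly continuous family $t\mapsto\ad i_G^\beta(t)(u_s)$ with the norm-continuous family $t\mapsto i_A^\beta(\beta_t(a))$. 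A separate, minor slip in your ``if'' direction: the inverse of $\Phi_u$ is implemented by the $\alpha$-cocycle $s\mapsto u_s^*$, not $s\mapsto\alpha_s(u_s^*)=u_s\beta_s(u_s^*)u_s^*$; the latter is generally a different unitary and does not satisfy $\beta=\ad v\circ\alpha$. The remaining pieces --- the cocycle identity for $\tilde u$, the verification that $\alpha=\ad\tilde u\circ\beta$, the equivariance computations on generators, and the mutual inverseness giving the bijection --- match the paper's proof.
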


\begin{proof}
First suppose that $u$ is a $\beta$-cocycle and $\alpha=\ad u\circ\beta$.
Define $V\colon G\to M(A\rtimes_\beta G)$ by
\[
V_s=i_A^\beta(u_s)i_G^\beta(s).
\]
Then $V$ is a strictly continuous unitary map, and is a homomorphism because $u$ is a $\beta$-cocycle.
Routine computations show that the pair $(i_A^\beta,V)$ is a covariant homomorphism of the action $(A,\alpha)$ in $M(A\rtimes_\beta G)$,
whose integrated form $\Phi$ takes $f\in C_c(G,A)$ to the element $\Phi(f)$ of $C_c(G,A)$ given by
\[
\Phi(f)(s)=f(s)u_s.
\]
Thus $\Phi$ maps $A\rtimes_\alpha G$ into $A\rtimes_\beta G$.
The $\alpha$-cocycle $u^*$ gives a homomorphism in the opposite direction that is easily verified to be an inverse of $\Phi$.

We verify that $\Phi$ is $\what\alpha-\what\beta$ equivariant
by checking the generators. For $a\in A$,
\begin{align*}
(\Phi\otimes\id)\circ \what\alpha\circ i_A^\alpha(a)
&=(\Phi\otimes\id)\bigl(i_A^\alpha(a)\otimes 1\bigr)
\\&=\Phi\circ i_A^\alpha(a)\otimes 1
\\&=i_A^\beta(a)\otimes 1
\\&=\what\beta\circ i_A^\beta(a)
\\&=\what\beta\circ \Phi\circ i_A^\alpha(a),
\end{align*}
and for $s\in G$,
\begin{align*}
(\Phi\otimes\id)\circ \what\alpha\circ i_G^\alpha(s)
&=(\Phi\otimes\id)\bigl(i_G^\alpha(s)\otimes s\bigr)
\\&=\Phi\circ i_G^\alpha(s)\otimes s
\\&=i_A^\beta(u_s)i_G^\beta(s)\otimes s
\\&=\bigl(i_A^\beta(u_s)\otimes 1\bigr)\bigl(i_G^\beta(s)\otimes s\bigr)
\\&=\what\beta\circ i_A^\beta(u_s)\what\beta\circ i_G^\beta(s)
\\&=\what\beta\bigl(i_A^\beta(u_s)i_G^\beta(s)\bigr)
\\&=\what\beta\circ \Phi\circ i_G^\alpha(s).
\end{align*}

Note that the above construction takes a
$\beta$-cocycle $u$ and produces
an $\what\alpha-\what\beta$ equivariant isomorphism
$\Phi\colon A\rtimes_\alpha G\to A\rtimes_\beta G$ such that
both
$\Phi\circ i_A^\alpha=i_A^\beta$
and \eqref{Phi} hold.
This construction is obviously injective from cocycles to equivariant isomorphisms.

Now suppose that $\Upsilon\colon A\rtimes_\alpha G\to A\rtimes_\beta G$ is an $\what\alpha-\what\beta$ equivariant isomorphism such that $\Upsilon\circ i_A^\alpha=i_A^\beta$.
Define $U\colon G\to M(A\rtimes_\beta G)$ by
\[
U_s=\Upsilon\bigl(i_G^\alpha(s)\bigr)i_G^\beta(s)^*.
\]
Then $U$ is a strictly continuous unitary map, and a quick calculation shows that for $s,t\in G$ we have
\begin{equation}\label{U cocycle}
U_{st}=U_s\ad i_G^\beta(s)(U_t).
\end{equation}
We claim that for all $s\in G$, the value $U_s$ is in the image $i_A^\beta(M(A))$.
First note that $i_A^\beta(M(A))=M(i_A^\beta(A))$.
So, we must
show that for every $a\in A$
the products $U_si_A^\beta(a)$ and $i_A^\beta(a)U_s$ are in $i_A^\beta(A)$.
We only give the argument for the first product;
the computations for the other product are very similar.

By \corref{i_A(A)},
it is enough to verify that the element $y=U_si_A^\beta(a)$ of $M(A\rtimes_\beta G)$ satisfies the following conditions:
\begin{enumerate}
\item $\what\beta(y)=y\otimes 1$;
\item $y\,i_G^\beta(f)$ and $i_G^\beta(f)\,y$ are in $A\rtimes_\beta G$ for all $f\in C_c(G)$;
\item $t\mapsto \ad i_G^\beta(t)(y)$ is norm continuous.
\end{enumerate}
For (i), we have
\begin{align*}
&\what\beta\bigl(\Upsilon\circ i_G^\alpha(s)i_G^\beta(s)^*i_A^\beta(a)\bigr)
\\&\quad=(\Upsilon\otimes\id)\circ \what\alpha\circ i_G^\alpha(s)\bigl(i_G^\beta(s)^*i_A^\beta(a)\otimes s\inv\bigr)
\\&\quad=(\Upsilon\otimes\id)\bigl(i_G^\alpha(s)\otimes s\bigr)\bigr)\bigl(i_G^\beta(s)^*i_A^\beta(a)\otimes s\inv\bigr)
\\&\quad=\Upsilon\circ i_G^\alpha(s)i_G^\beta(s)^*i_A^\beta(a)\otimes 1.
\end{align*}
For (ii), the first product is obvious, and the second product is similar once we note that
\begin{align*}
y
&=\Upsilon\circ i_G^\alpha(s)i_A^\beta\circ \beta_{s\inv}(a)i_G^\beta(s)^*
\\&=\Upsilon\circ i_G^\alpha(s)\Upsilon\circ i_A^\alpha\circ \beta_{s\inv}(a)i_G^\beta(s)^*
\\&=\Upsilon\bigl(i_A^\alpha\circ \alpha_s\circ \beta_{s\inv}(a)i_G^\alpha(s)\bigr)i_G^\beta(s)^*
\\&=i_A^\beta\circ \alpha_s\circ \beta_{s\inv}(a)U_s.
\end{align*}
Condition (iii) follows from combining
the identity $i_G^\beta(t)U_si_A^\beta(a)i_G^\beta(t)^*=\ad i_G^\beta(t)(U_s)i_A^\beta\circ \beta_t(a)$
with the facts that
$t\mapsto \ad i_G^\beta(t)(U_s)$ is strictly continuous and norm bounded, and
$t\mapsto i_A^\beta\circ \beta_t(a)$ is norm continuous.

We have proved the claim that $U_s\in i_A^\beta(M(A))$.
Since $i_A^\beta\colon A\to i_A^\beta(A)$ is an isomorphism,
we conclude that there is a unique strictly continuous unitary map $u\colon G\to M(A)$ such that
\[
U_s=i_A^\beta(u_s),
\]
and then since $i_A^\beta$ is $\beta-\ad i_G^\beta$ equivariant
it follows from \eqref{U cocycle} that $u$ is a $\beta$-cocycle.
By construction, the isomorphism $\Upsilon$ arises from this cocycle as in the first part of the proof,
and this proves the second part of the theorem.
\end{proof}

\thmref{pedersen} is the only version we will need.
However, we record the following version for reduced crossed products, since it might be useful elsewhere.

\begin{thm}
\label{pedersen reduced}
Let $\alpha$ and $\beta$ be actions of a locally compact group $G$ on a $C^*$-algebra $A$.
Then $\alpha$ and $\beta$ are exterior equivalent
if and only if there is an $\what\alpha^n-\what\beta^n$ equivariant isomorphism $\Phi\colon A\rtimes_{\alpha,r} G\to A\rtimes_{\beta,r} G$
such that $\Phi\circ i_A^{\alpha,r}=i_A^{\beta,r}$.

Moreover, there is a bijection between the set of $\beta$-cocycles $u$ for which $\alpha=\ad u\circ\beta$
and the set of $\what\alpha^n-\what\beta^n$ equivariant isomorphisms $\Phi\colon A\rtimes_{\alpha,r} G\to A\rtimes_{\beta,r} G$
for which $\Phi\circ i_A^{\alpha,r}=i_A^{\beta,r}$,
given for $s\in G$ by
\begin{align*}
\Phi\circ i_G^{\alpha,r}(s)&=i_A^{\beta,r}(u_s)i_G^{\beta,r}(s).
\end{align*}
\end{thm}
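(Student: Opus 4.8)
The plan is to reproduce the proof of \thmref{pedersen} almost line for line, systematically replacing $A\rtimes_\alpha G$ and $A\rtimes_\beta G$ by the reduced crossed products, the maximal dual coactions $\what\alpha,\what\beta$ by the normal dual coactions $\what\alpha^n,\what\beta^n$, and the canonical maps $i_A^\alpha,i_G^\alpha,i_A^\beta,i_G^\beta$ by their reduced counterparts $i_A^{\alpha,r}$, $i_G^{\alpha,r}$, etc. The only ingredient beyond the computations already in the proof of \thmref{pedersen} is the reduced analogue of \corref{i_A(A)}: for $m\in M(A\rtimes_{\beta,r}G)$ one has $m\in i_A^{\beta,r}(A)$ if and only if $\what\beta^n(m)=m\otimes 1$, the products $m\,i_G^{\beta,r}(f)$ and $i_G^{\beta,r}(f)\,m$ lie in $A\rtimes_{\beta,r}G$ for every $f\in C_c(G)$, and $s\mapsto\ad i_G^{\beta,r}(s)(m)$ is norm continuous from $G$ to $M(A\rtimes_{\beta,r}G)$. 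I would obtain this from \thmref{ldar} applied to the equivariant normal coaction $(A\rtimes_{\beta,r}G,\what\beta^n,i_G^{\beta,r})$, in exactly the way \corref{i_A(A)} is obtained from \thmref{lda}; as there, one also uses $i_A^{\beta,r}(M(A))=M(i_A^{\beta,r}(A))$.

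For the implication from a $\beta$-cocycle to an isomorphism, and for the ``moreover'' clause's surjectivity and formula, I would piggyback on \thmref{pedersen} rather than recompute. Given a $\beta$-cocycle $u$ with $\alpha=\ad u\circ\beta$, \thmref{pedersen} produces an $\what\alpha-\what\beta$ equivariant isomorphism $\Psi\colon A\rtimes_\alpha G\to A\rtimes_\beta G$ with $\Psi\circ i_A^\alpha=i_A^\beta$ and $\Psi\circ i_G^\alpha(s)=i_A^\beta(u_s)i_G^\beta(s)$. Post-composing with the regular representation $\Lambda^\beta$ and using $i_A^{\beta,r}=\Lambda^\beta\circ i_A^\beta$, $i_G^{\beta,r}=\Lambda^\beta\circ i_G^\beta$, we get a surjection $\Lambda^\beta\circ\Psi\colon A\rtimes_\alpha G\to A\rtimes_{\beta,r}G$ that is $\what\alpha-\what\beta^n$ equivariant with normal target; since $\Psi$ and $\Lambda^\beta$ both induce isomorphisms after crossing with $G$, so does $\Lambda^\beta\circ\Psi$, whence $\Lambda^\beta\circ\Psi$ is a normalization of $\what\alpha$. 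As normalizations are unique up to isomorphism (\subsecref{normalization}), it has the same kernel as $\Lambda^\alpha$ and therefore descends to an $\what\alpha^n-\what\beta^n$ equivariant isomorphism $\Phi\colon A\rtimes_{\alpha,r}G\to A\rtimes_{\beta,r}G$ with $\Phi\circ\Lambda^\alpha=\Lambda^\beta\circ\Psi$. Chasing the identities for $\Psi$ through this relation gives $\Phi\circ i_A^{\alpha,r}=i_A^{\beta,r}$ and $\Phi\circ i_G^{\alpha,r}(s)=i_A^{\beta,r}(u_s)i_G^{\beta,r}(s)$. That distinct cocycles yield distinct $\Phi$ is immediate from the last formula and faithfulness of $i_A^{\beta,r}$, so $u\mapsto\Phi$ is injective.

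For the converse, which I expect to be the substantive part, suppose $\Upsilon\colon A\rtimes_{\alpha,r}G\to A\rtimes_{\beta,r}G$ is an $\what\alpha^n-\what\beta^n$ equivariant isomorphism with $\Upsilon\circ i_A^{\alpha,r}=i_A^{\beta,r}$. One cannot simply ``de-normalize'' $\Upsilon$ and quote \thmref{pedersen}: its maximalization is an isomorphism of the full crossed products, but pulling the hypothesis $\Upsilon\circ i_A^{\alpha,r}=i_A^{\beta,r}$ back up through the regular representations is not automatic, since $\Lambda^\beta$ need not be injective on multipliers. Instead I would rerun Pedersen's argument directly: set $U_s=\Upsilon(i_G^{\alpha,r}(s))\,i_G^{\beta,r}(s)^*\in M(A\rtimes_{\beta,r}G)$, a strictly continuous unitary map, and verify the cocycle identity \eqref{U cocycle} with all maps replaced by their reduced versions. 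The crux is the claim $U_s\in i_A^{\beta,r}(M(A))=M(i_A^{\beta,r}(A))$; as in \thmref{pedersen}, this is checked by verifying, for each $a\in A$, the three conditions of the reduced analogue of \corref{i_A(A)} for $y=U_s\,i_A^{\beta,r}(a)$ (and symmetrically for $i_A^{\beta,r}(a)\,U_s$), the computations being exactly those of the proof of \thmref{pedersen} with the reduced crossed products and normal dual coactions in place of their maximal counterparts. Granting the claim, faithfulness of $i_A^{\beta,r}$ yields a unique strictly continuous unitary $u\colon G\to M(A)$ with $U_s=i_A^{\beta,r}(u_s)$; since $i_A^{\beta,r}$ is $\beta-\ad i_G^{\beta,r}$ equivariant, the reduced \eqref{U cocycle} forces $u$ to be a $\beta$-cocycle, and unwinding the definition of $U_s$ shows $\alpha=\ad u\circ\beta$ and that $\Upsilon$ coincides with the isomorphism built from $u$ in the first part. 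This establishes both the ``only if'' direction and the surjectivity of $u\mapsto\Phi$, completing the proof of the bijection.
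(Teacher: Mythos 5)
Your proposal is correct and in substance matches the paper's argument: both hinge on the reduced analogue of \corref{i_A(A)} (obtained from \thmref{ldar}) and on rerunning the cocycle-extraction step of \thmref{pedersen} inside the reduced crossed product, and both correctly identify that the hard direction cannot be obtained by simply maximalizing $\Upsilon$ and quoting the full theorem. The only organizational difference is that the paper frames the whole proof as transferring the condition $\Phi\circ i_A^\alpha=i_A^\beta$ across the bijection $\Phi\mapsto\Phi^n$ furnished by normalization/maximalization and then invokes \thmref{pedersen}, whereas you prove the reduced statement directly at the reduced level; this is immaterial.
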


\begin{proof}
Recall from \subsecref{normalization} that the dual coaction $\what\alpha$ on a full crossed product $A\rtimes_\alpha G$ is maximal,
and
the regular representation
\[
\Lambda\colon (A\rtimes_\alpha G,\what\alpha)\to (A\rtimes_{\alpha,r} G,\what\alpha^n)
\]
is a both a normalization and a maximalization,
and consequently the map $\Phi\mapsto \Phi^n$ gives a bijection between the sets of 
$\what\alpha-\what\beta$ equivariant isomorphisms $\Phi\colon A\rtimes_\alpha G\to A\rtimes_\beta G$
and
$\what\alpha^n-\what\beta^n$ equivariant isomorphisms $\Phi\colon A\rtimes_{\alpha,r} G\to A\rtimes_{\beta,r} G$.

We need to know that
$\Phi\circ i_A^\alpha=i_A^\beta$
if and only if
$\Phi^n\circ i_A^{\alpha,r}=i_A^{\beta,r}$.
One direction is straightforward:
a computation
using the commutative diagram
\[
\xymatrix{
A\rtimes_\alpha G \ar[r]^-\Phi \ar[d]_{\Lambda^\alpha}
&A\rtimes_\beta G \ar[d]^{\Lambda^\beta}
\\
A\rtimes_{\alpha,r} G \ar[r]_-{\Phi^n}
&A\rtimes_{\beta,r} G
}
\]
shows that
$\Phi\circ i_A^\alpha=i_A^\beta$
implies
$\Phi^n\circ i_A^{\alpha,r}=i_A^{\beta,r}$.

On the other hand, the converse implication seems to be a little harder, requiring an indirect approach via cocycles again.
Assume that 
$\Phi^n\circ i_A^{\alpha,r}=i_A^{\beta,r}$.
Using the same technique as in the proof of \thmref{pedersen} for the case of full crossed products,
but working in the reduced crossed products,
we use the isomorphism $\Phi^n$ to
get a $\beta$-cocycle $u$ such that
\[
\Phi^n\circ i_G^{\alpha,r}(s)=i_A^{\beta,r}(u_s)i_G^{\beta,r}(s).
\]
Applying
\thmref{pedersen} to this cocycle $u$
gives an $\what\alpha-\what\beta$ equivariant isomorphism
$\Upsilon\colon A\rtimes_\alpha G\to A\rtimes_\beta G$
such that $\Upsilon\circ i_A^\alpha=i_A^\beta$
and $\Upsilon\circ i_G^\alpha(s)=i_A^\beta(u_s)i_G^\beta(s)$.
We check that
the diagram
\[
\xymatrix{
A\rtimes_\alpha G \ar[r]^-\Upsilon \ar[d]_{\Lambda^\alpha}
&A\rtimes_\beta G \ar[d]^{\Lambda^\beta}
\\
A\rtimes_{\alpha,r} G \ar[r]_-{\Phi^n}
&A\rtimes_{\beta,r} G
}
\]
 commutes
 by computing on the generators:
 \[
 \Phi^n\circ \Lambda^\alpha\circ i_A^\alpha
= \Phi^n\circ i_A^{\alpha,r}
=i_A^{\beta,r}
=\Lambda^\beta\circ i_A^\beta
=\Lambda^\beta\circ \Upsilon\circ i_A^\alpha,
 \]
 and for $s\in G$ we have
 \begin{align*}
 \Phi^n\circ \Lambda^\alpha\circ i_G^\alpha(s)
 &=\Phi^n\circ i_G^{\alpha,r}(s)
 =i_A^{\beta,r}(u_s)i_G^{\beta,r}(s)
 \\&=\Lambda^\beta\bigl(i_A^\beta(u_s)i_G^\beta(s)\bigr)
 =\Lambda^\beta\circ\Upsilon\circ i_G^\alpha(s).
 \end{align*}
Since the vertical maps $\Lambda^\alpha$ and $\Lambda^\beta$ are 
maximalizations,
the isomorphism $\Upsilon$ must coincide with $\Phi$.
Therefore $\Phi\circ i_A^\alpha=i_A^\beta$, as required.
\end{proof}

\begin{notn}
In 
\thmref{pedersen}, given a $\beta$-cocycle $u$, we will denote the associated isomorphism of $A\rtimes_\alpha G$ onto $A\rtimes_\beta G$ by $\Phi_u$.
\end{notn}

\begin{rem}
For actions $\alpha$ and $\beta$ on $A$, we could 
say that $\alpha$ is ``measurably exterior equivalent'' 
to $\beta$ if there exists a \emph{measurable} $\alpha$-cocycle~$\nu$ 
(in the sense that $t\mapsto \nu_t$ is measurable) such that $\beta=\ad\nu\circ\alpha$.
This is evidently a weaker notion than exterior equivalence.
However, 
we can adapt the technique of
proof of 
Proposition~\ref{pedersen} 
by defining $\Phi$ on $L^1(G,A)$ instead of $C_c(G,A)$, and then we can use 
\propref{pedersen}
to obtain a \emph{continuous} $\alpha$-cocycle $u$.
Thus, actions $\alpha$ and $\beta$ are ``measurably exterior equivalent'' if and only if they are exterior equivalent.
\end{rem}

The following elementary lemma is presumably folklore;
we include the proof because we could not find a reference for it in the literature.

\begin{lem}\label{cocycle multiply}
If $u$ is an $\alpha$-cocycle and $v$ is an $\ad u\circ\alpha$-cocycle,
then $vu$ is an $\alpha$-cocycle, 
and
\[
\Phi_{vu}=\Phi_u\circ\Phi_v.
\]
\end{lem}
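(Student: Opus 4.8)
The plan is to recognize both $\Phi_{vu}$ and $\Phi_u\circ\Phi_v$ as the isomorphism that \thmref{pedersen} attaches to the cocycle $vu$, and then to invoke the uniqueness clause in that theorem; the only genuine work is a short cocycle computation and keeping the domains and codomains straight. Throughout, write $\beta=\ad u\circ\alpha$ and $\gamma=\ad v\circ\beta$, so that $v$ is a $\beta$-cocycle and $\gamma=\ad v\circ\ad u\circ\alpha$.

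First I would check that the map $w$ given by $w_s=v_su_s$ is an $\alpha$-cocycle. It is strictly continuous and unitary-valued, since $u$ and $v$ are, and the cocycle identity follows from $u_{st}=u_s\alpha_s(u_t)$ together with $v_{st}=v_s\beta_s(v_t)=v_su_s\alpha_s(v_t)u_s^*$:
\[
w_{st}=v_{st}u_{st}=v_su_s\alpha_s(v_t)u_s^*u_s\alpha_s(u_t)=v_su_s\alpha_s(v_t)\alpha_s(u_t)=v_su_s\alpha_s(v_tu_t)=w_s\alpha_s(w_t).
\]
Since $\ad(xy)=\ad(x)\circ\ad(y)$, we get $\ad w_s=\ad v_s\circ\ad u_s$, hence $\ad w\circ\alpha=\ad v\circ\ad u\circ\alpha=\gamma$. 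Thus $w=vu$ is an $\alpha$-cocycle with $\ad w\circ\alpha=\gamma$, and $\Phi_{vu}$ is defined and maps $A\rtimes_\gamma G$ onto $A\rtimes_\alpha G$.

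Next I would line up the three maps via \thmref{pedersen} and the notation convention: $\Phi_u\colon A\rtimes_\beta G\to A\rtimes_\alpha G$ is $\what\beta-\what\alpha$ equivariant with $\Phi_u\circ i_A^\beta=i_A^\alpha$ and $\Phi_u\circ i_G^\beta(s)=i_A^\alpha(u_s)i_G^\alpha(s)$; $\Phi_v\colon A\rtimes_\gamma G\to A\rtimes_\beta G$ is $\what\gamma-\what\beta$ equivariant with $\Phi_v\circ i_A^\gamma=i_A^\beta$ and $\Phi_v\circ i_G^\gamma(s)=i_A^\beta(v_s)i_G^\beta(s)$; and $\Phi_{vu}\colon A\rtimes_\gamma G\to A\rtimes_\alpha G$ is $\what\gamma-\what\alpha$ equivariant with $\Phi_{vu}\circ i_A^\gamma=i_A^\alpha$. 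Hence $\Phi_u\circ\Phi_v$ is a $\what\gamma-\what\alpha$ equivariant isomorphism $A\rtimes_\gamma G\to A\rtimes_\alpha G$ satisfying $(\Phi_u\circ\Phi_v)\circ i_A^\gamma=\Phi_u\circ i_A^\beta=i_A^\alpha$, so by the bijection in \thmref{pedersen} it must coincide with $\Phi_{vu}$. (If one prefers an explicit check rather than citing uniqueness, it is enough to see that $\Phi_u\circ\Phi_v$ and $\Phi_{vu}$ agree on the canonical covariant homomorphism of $(A,\gamma)$: on $i_A^\gamma(a)$ both give $i_A^\alpha(a)$, and on $i_G^\gamma(s)$, using $\Phi_v\circ i_G^\gamma(s)=i_A^\beta(v_s)i_G^\beta(s)$ and then applying $\Phi_u$, extended to multipliers, via $\Phi_u\circ i_A^\beta=i_A^\alpha$ and $\Phi_u\circ i_G^\beta(s)=i_A^\alpha(u_s)i_G^\alpha(s)$, one obtains $i_A^\alpha(v_s)i_A^\alpha(u_s)i_G^\alpha(s)=i_A^\alpha(v_su_s)i_G^\alpha(s)=\Phi_{vu}\circ i_G^\gamma(s)$.)

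I do not expect a real obstacle. The one thing to keep straight is the direction bookkeeping — which action's crossed product is the domain and which the codomain of each $\Phi$ — since this is what forces the composition to be $\Phi_u\circ\Phi_v$ rather than $\Phi_v\circ\Phi_u$: by \thmref{pedersen}, a cocycle for $\alpha$ (here $u$, and $vu$) yields an isomorphism whose codomain is $A\rtimes_\alpha G$, whereas $v$, being a $\beta$-cocycle, yields one whose codomain is $A\rtimes_\beta G$, so $\Phi_v$ must be applied first.
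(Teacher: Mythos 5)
Your proof is correct and follows essentially the same route as the paper: the cocycle identity for $vu$ is verified by the identical computation, and the equality $\Phi_{vu}=\Phi_u\circ\Phi_v$ is established by checking agreement on the generators $i_A^\gamma$ and $i_G^\gamma$. One small caution: citing the bijection of \thmref{pedersen} after verifying only $(\Phi_u\circ\Phi_v)\circ i_A^\gamma=i_A^\alpha$ does not by itself identify \emph{which} cocycle $\Phi_u\circ\Phi_v$ corresponds to, so the explicit computation on $i_G^\gamma(s)$ in your parenthetical is not optional but is the essential step (and it is exactly the paper's argument).
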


\begin{proof}
First note that $vu\colon G\to M(A)$ is a strictly continuous unitary map, and for $s,t\in G$,
\begin{align*}
(vu)_{st}
&=v_{st}u_{st}
=v_s(\ad u\circ\alpha)_s(v_t)u_s\alpha_s(u_t)
\\&=v_s(\ad u_s\circ\alpha_s)(v_t)u_s\alpha_s(u_t)
\\&=v_su_s\alpha_s(v_t)u_s^*u_s\alpha_s(u_t)
\\&=v_su_s\alpha_s(v_t)\alpha_s(u_t)
\\&=v_su_s\alpha_s(v_tu_t)
=(vu)_s\alpha_s\bigl((vu)_t\bigr).
\end{align*}
Thus $vu$ is an $\alpha$-cocycle.

For the other part,
\begin{align*}
\Phi_u\circ \Phi_v
\circ i_A^{\ad v\circ\ad u\circ\alpha}
&=\Phi_u\circ i_A^{\ad u\circ\alpha}
=i_A^\alpha
=\Phi_{vu}\circ i_A^{\ad vu\circ\alpha},
\end{align*}
and for $s\in G$
\begin{align*}
\Phi_u\circ \Phi_v
\circ i_G^{\ad v\circ\ad u\circ\alpha}(s)
&=\Phi_u\bigl(i_A^{\ad u\circ\alpha}(v_s)i_G^{\ad u\circ\alpha}(s)\bigr)
\\&=i_A^\alpha(v_s)i_A^\alpha(u_s)i_G^\alpha(s)
\\&=i_A^\alpha\bigl((vu)_s\bigr)i_G^\alpha(s)
\\&=\Phi_{vu}
\circ i_G^{\ad vu\circ\alpha}(s)
\\&=\Phi_{vu}
\circ i_G^{\ad v\circ\ad u\circ\alpha}(s).
\qedhere
\end{align*}
\end{proof}

Proposition~2.8 of \cite{qrtwisted},
and its proof, imply the following partial analogue of Pedersen's theorem for coactions:

\begin{prop}[Pedersen's theorem for coactions]\label{copedersen}
Let $\delta$ be a normal coaction of $G$ on $A$.
Let $U$ be a $\delta$-cocycle, and let $\epsilon=\ad U\circ\delta$.
Then there is a unique $\what\epsilon-\what\delta$ equivariant isomorphism
\[
\Phi_U\colon A\rtimes_\epsilon G\variso A\rtimes_\delta G
\]
such that
\begin{align*}
\Phi_U\circ j_A^\epsilon&=j_A^\delta\\
(\Phi_U\otimes\id)\bigl((j_G^\epsilon\otimes\id)(w_G)\bigr)&=(j_A^\delta\otimes\id)(U)(j_G^\delta\otimes\id)(w_G).
\end{align*}
\end{prop}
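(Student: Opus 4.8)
The plan is to build $\Phi_U$ as the integrated form of a covariant homomorphism of the coaction $(A,\epsilon)$ into $M(A\rtimes_\delta G)$ — the coaction-side mirror of the first half of the proof of \thmref{pedersen}, and in essence the content of \cite[Proposition~2.8]{qrtwisted} up to matching conventions, but I will run the argument in the present notation. Concretely, I would keep the first leg equal to $j_A^\delta\colon A\to M(A\rtimes_\delta G)$ and twist $j_G^\delta$ by $U$. Since $j_G^\delta$ is a homomorphism out of $C_0(G)$, not a unitary representation of $G$, the twisting must take place at the level of $w_G$: I would set
\[
V=(j_A^\delta\otimes\id)(U)\,(j_G^\delta\otimes\id)(w_G)\in M\bigl((A\rtimes_\delta G)\otimes C^*(G)\bigr),
\]
a unitary, and check the corepresentation identity $(\id\otimes\delta_G)(V)=V_{12}V_{13}$. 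This should fall out of expanding the left-hand side and using three things: the $\delta$-cocycle identity $(\id\otimes\delta_G)(U)=(U\otimes 1)(\delta\otimes\id)(U)$ on the $U$-factor, the identity $(\id\otimes\delta_G)(w_G)=(w_G)_{12}(w_G)_{13}$ on the $w_G$-factor, and the covariance relation $(j_A^\delta\otimes\id)\delta(a)=\ad\bigl((j_G^\delta\otimes\id)(w_G)\bigr)(j_A^\delta(a)\otimes 1)$ of the canonical pair $(j_A^\delta,j_G^\delta)$, used to commute the middle factors past one another. Because a corepresentation of $G$ in a multiplier algebra $M(D)$ is the same thing as a nondegenerate homomorphism $C_0(G)\to M(D)$ (recovered from $V$ by slicing, the slices of $w_G$ recovering $C_0(G)$), this yields a nondegenerate homomorphism $\mu\colon C_0(G)\to M(A\rtimes_\delta G)$ with $(\mu\otimes\id)(w_G)=V$. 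This corepresentation computation is the step I expect to be the main obstacle — it is where the $\delta$-cocycle identity is genuinely needed and where the leg-numbering bookkeeping has to be handled with care — whereas everything afterwards is a string of routine checks on generators.

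Next I would verify that $(j_A^\delta,\mu)$ is covariant for $\epsilon$. Since $\epsilon=\ad U\circ\delta$, we have $(j_A^\delta\otimes\id)(\epsilon(a))=(j_A^\delta\otimes\id)(U)\,(j_A^\delta\otimes\id)(\delta(a))\,(j_A^\delta\otimes\id)(U)^*$; rewriting the middle factor by covariance of $(j_A^\delta,j_G^\delta)$ for $\delta$ should make the right-hand side collapse to $V\,(j_A^\delta(a)\otimes 1)\,V^*=\ad\bigl((\mu\otimes\id)(w_G)\bigr)(j_A^\delta(a)\otimes 1)$, which is the covariance condition for $\epsilon$. (This step uses only that $U$ is unitary; condition (ii) in the definition of an $\epsilon$-cocycle is the part that makes $\ad U\circ\delta$ a coaction in the first place, so that $A\rtimes_\epsilon G$ is defined.) I would then let $\Phi_U=j_A^\delta\times\mu\colon A\rtimes_\epsilon G\to M(A\rtimes_\delta G)$ be the integrated form, so that $\Phi_U\circ j_A^\epsilon=j_A^\delta$ and $\Phi_U\circ j_G^\epsilon=\mu$; the second identity immediately gives $(\Phi_U\otimes\id)\bigl((j_G^\epsilon\otimes\id)(w_G)\bigr)=(\mu\otimes\id)(w_G)=(j_A^\delta\otimes\id)(U)(j_G^\delta\otimes\id)(w_G)$, so both displayed identities in the statement hold.

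To promote $\Phi_U$ to an isomorphism onto $A\rtimes_\delta G$ (rather than a homomorphism merely into the multipliers), I would observe that $U^*$ is an $\epsilon$-cocycle with $\ad U^*\circ\epsilon=\delta$, run the same construction with the roles of $\delta$ and $\epsilon$ swapped to get a nondegenerate homomorphism $\Phi_{U^*}\colon A\rtimes_\delta G\to M(A\rtimes_\epsilon G)$, and then check on the generators $j_A(A)$ and on the corepresentations $(j_G\otimes\id)(w_G)$ of the relevant crossed product that $\Phi_{U^*}\circ\Phi_U=\id$ and $\Phi_U\circ\Phi_{U^*}=\id$ — the computation on the corepresentations collapsing because $UU^*=U^*U=1$. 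For the $\what\epsilon-\what\delta$ equivariance I would again check on generators, using $\what\delta_s=j_A^\delta\times(j_G^\delta\circ\rt_s)$ and the analogous formula for $\what\epsilon$: on $j_A^\epsilon(A)$ both composites return $j_A^\delta$, and on $j_G^\epsilon$ the required identity $\Phi_U\circ j_G^\epsilon\circ\rt_s=\what\delta_s\circ\Phi_U\circ j_G^\epsilon$ follows by comparing the corepresentations of the two sides, using $(\rt_s\otimes\id)(w_G)=w_G(1\otimes u_s)$, the $\rt-\what\delta$ equivariance of $j_G^\delta$, and $\what\delta_s\circ j_A^\delta=j_A^\delta$. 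Finally, uniqueness is immediate: $j_A^\epsilon(A)$ and $j_G^\epsilon(C_0(G))$ generate $A\rtimes_\epsilon G$, the first displayed identity determines $\Phi_U$ on $j_A^\epsilon(A)$, and the second determines $\Phi_U\circ j_G^\epsilon$, because a nondegenerate homomorphism out of $C_0(G)$ is determined by its corepresentation.
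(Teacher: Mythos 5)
Your proposal is correct, and it is essentially the argument the paper is pointing to: the paper gives no proof of its own here, deferring entirely to \cite[Proposition~2.8]{qrtwisted} and its proof, and your explicit construction --- twist the corepresentation to $V=(j_A^\delta\otimes\id)(U)(j_G^\delta\otimes\id)(w_G)$, verify $(\id\otimes\delta_G)(V)=V_{12}V_{13}$ from the cocycle identity together with the covariance of $(j_A^\delta,j_G^\delta)$, integrate, and invert via the $\epsilon$-cocycle $U^*$ --- is exactly the content of that reference transported into the present conventions. The one step you flag as delicate, the corepresentation identity, does go through as you describe: the conjugating factors $W_{12}$ and $W_{12}^*$ produced by covariance cancel against the $W_{12}W_{13}$ coming from $(\id\otimes\delta_G)(w_G)$, leaving precisely $V_{12}V_{13}$.
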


However, for coactions the converse is still open.

The following elementary lemma,
dual to \lemref{cocycle multiply},
is presumably folklore,
and is included for completeness,
because the computations are peculiar to coactions.

\begin{lem}\label{cocycle multiply coaction}
If $U$ is a $\delta$-cocycle and $V$ is an $\ad u\circ\delta$ cocycle,
then $VU$ is a $\delta$-cocycle and
\[
\Phi_{VU}=\Phi_U\circ\Phi_V.
\]
\end{lem}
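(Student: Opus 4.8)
The statement is the coaction-dual of \lemref{cocycle multiply}, so the plan is to mimic that proof step by step, being careful with the twisted cocycle identities and the multiplier-algebra conventions for coactions. First I would verify that $VU$ is a genuine $\delta$-cocycle. This breaks into two parts: checking the cocycle identity (i) from \subsecref{exterior}, namely $(\id\otimes\delta_G)(VU)=(VU\otimes 1)(\delta\otimes\id)(VU)$, and checking the nondegeneracy-type condition (ii), namely $\ad(VU)\circ\delta(A)(1\otimes C^*(G))\subset A\otimes C^*(G)$. For (i) I would expand $(\id\otimes\delta_G)(V)$ and $(\id\otimes\delta_G)(U)$ using that $V$ is an $\ad U\circ\delta$-cocycle and $U$ is a $\delta$-cocycle, then reassociate the four resulting factors — the key algebraic input is that $(\ad U\circ\delta\otimes\id)(V)$ can be rewritten in terms of $\ad((U\otimes 1)(\delta\otimes\id)(U))$ applied appropriately, which is exactly where the cocycle identity for $U$ gets used. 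Condition (ii) follows by composing the two containments $\ad U\circ\delta(A)(1\otimes C^*(G))\subset A\otimes C^*(G)$ and $\ad V\circ(\ad U\circ\delta)(A)(1\otimes C^*(G))\subset A\otimes C^*(G)$, since $\ad(VU)\circ\delta=\ad V\circ\ad U\circ\delta$.

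Next, with $VU$ known to be a $\delta$-cocycle, $\epsilon:=\ad V\circ\ad U\circ\delta=\ad U\circ\delta$ first and then $\ad V$ of that; setting $\eta:=\ad U\circ\delta$, \propref{copedersen} gives isomorphisms $\Phi_U\colon A\rtimes_\eta G\to A\rtimes_\delta G$, $\Phi_V\colon A\rtimes_{\ad V\circ\eta}G\to A\rtimes_\eta G$, and $\Phi_{VU}\colon A\rtimes_{\ad V\circ\eta}G\to A\rtimes_\delta G$ (noting $\ad(VU)\circ\delta=\ad V\circ\eta$). I would then prove $\Phi_{VU}=\Phi_U\circ\Phi_V$ by checking the two defining properties from \propref{copedersen} and invoking uniqueness. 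The first property is immediate: $\Phi_U\circ\Phi_V\circ j_A^{\ad V\circ\eta}=\Phi_U\circ j_A^{\eta}=j_A^{\delta}$, using the defining property of each of $\Phi_V$ and $\Phi_U$. The second property, concerning the action on $(j_G\otimes\id)(w_G)$, requires more care: I would apply $\Phi_V$ first to get $(j_A^{\eta}\otimes\id)(V)(j_G^{\eta}\otimes\id)(w_G)$, then apply $\Phi_U$ to each factor, using $\Phi_U\circ j_A^{\eta}=j_A^{\delta}$ on the $V$-factor and the second defining property of $\Phi_U$ on the $w_G$-factor, and finally recognize the product $(j_A^{\delta}\otimes\id)(V)(j_A^{\delta}\otimes\id)(U)=(j_A^{\delta}\otimes\id)(VU)$ to land on the expression required by \propref{copedersen} for $\Phi_{VU}$. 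By uniqueness in \propref{copedersen}, this forces $\Phi_{VU}=\Phi_U\circ\Phi_V$.

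The main obstacle I anticipate is purely bookkeeping rather than conceptual: keeping track of which algebra each $j_A$, $j_G$ lives on (the coactions $\delta$, $\eta=\ad U\circ\delta$, and $\ad V\circ\eta$ all have the same underlying $C^*$-algebra $A$ but different crossed products), and correctly propagating the multiplier-algebra slice maps $(\,\cdot\,\otimes\id)$ and the leg $w_G\in M(C_0(G)\otimes C^*(G))$ through the homomorphisms $\Phi_U$ and $\Phi_V$ tensored with $\id$. The one genuinely coaction-specific subtlety — which is why the lemma is stated separately from \lemref{cocycle multiply} — is that there is no clean ``evaluate at $s\in G$'' computation available as there was for actions; everything must be done at the level of the equivariant homomorphisms and the canonical unitary $w_G$, so the verification of the second defining property is where attention is needed. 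Once that computation is set up carefully, the result follows directly from the uniqueness clause of \propref{copedersen}.
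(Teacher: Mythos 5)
Your proposal is correct and follows essentially the same route as the paper: verify the cocycle identity and the containment condition for $VU$ directly (the paper dismisses this as ``routine computations''), then establish $\Phi_{VU}=\Phi_U\circ\Phi_V$ by checking the two defining properties of \propref{copedersen} --- first on $j_A$, then on $(j_G\otimes\id)(w_G)$ by pushing $\Phi_V$ and then $\Phi_U$ through factor by factor to arrive at $(j_A^\delta\otimes\id)(VU)(j_G^\delta\otimes\id)(w_G)$ --- and invoking uniqueness. The only quibble is that in your sketch of the cocycle identity the identity for $U$ enters when expanding $(\id\otimes\delta_G)(U)$ rather than when rewriting $(\ad U\circ\delta\otimes\id)(V)$ (the latter is just conjugation by $U\otimes 1$), but this does not affect the argument.
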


\begin{proof}
Clearly, $VU$ is a unitary in $M(A\otimes C^*(G))$,
and routine computations show that
$(\id\otimes\delta_G)(VU)=(VU\otimes 1)(\ad VU\circ\delta(VU))$
and
$\ad VU\circ \delta(A)(1\otimes C^*(G))\subset A\otimes C^*(G)$.
Thus $VU$ is a $\delta$-cocycle.

For the other part,
let $\epsilon=\ad U\circ\delta$ and $\zeta-\ad V\circ\epsilon$.
We have
\begin{align*}
\Phi_U\circ\Phi_V\circ j_A^\zeta
&=\Phi_U\circ j_A^\epsilon
=j_A^\delta
=\Phi_{VU}\circ j_A^\zeta,
\end{align*}
and
\begin{align*}
&(\Phi_U\circ\Phi_V\otimes\id)\bigl((j_G^\zeta\otimes\id(w_G)\bigr)
\\&\quad=(\Phi_U\otimes\id)\Bigl((\Phi_V\otimes\id)\bigl((j_G^\zeta\otimes\id)(w_G)\bigr)\Bigr)
\\&\quad=(\Phi_U\otimes\id)\bigl((j_A^\epsilon\otimes\id)(V)(j_G^\epsilon\otimes\id)(w_G)\bigr)
\\&\quad=(\Phi_U\circ j_A^\epsilon\otimes\id)(V)(\Phi_U\otimes\id)\bigl((j_G^\epsilon\otimes\id)(w_G)\bigr)
\\&\quad=(j_A^\delta\otimes\id)(V)(j_A^\delta\otimes\id)(U)(j_G^\delta\otimes\id)(w_G)
\\&\quad=(j_A^\delta\otimes\id)(VU)(j_G^\delta\otimes\id)(w_G).
\end{align*}
It follows that $\Phi_U\circ \Phi_V=\Phi_{VU}$.
\end{proof}

\section{Abstractly inverting a process}\label{abstract}

Very often in mathematics we are studying a process $P$ that takes inputs $x$ and produces outputs $P(x)$,
and several questions arise:
\begin{enumerate}
\item \label{outputs}
\textbf{Classify the outputs}: For which objects $y$ is there an input $x$ such that $P(x)=y$?
\item \label{inputs}
\textbf{Classify the inputs}: Given that $y$ is an output, find all inputs $x$ such that $P(x)=y$.
\item \label{invert}
\textbf{Invert the process}: Given that $P(x)=y$, what other data do we need to recover $x$?
\end{enumerate}
Typically we must interpret the above questions ``up to isomorphism''.
For example, we should write $P(x)\simeq y$ throughout,
in (ii) we should classify the $x$'s up to isomorphism,
and in (iii) we should only expect to recover $x$ up to isomorphism.

Our strategy is to put everything in a categorical setting,
so that the process $P$ is a functor,
and in (iii) we want to convert $P$ into a category equivalence.
We say ``convert'' here because most of the time the original version of the process will not be an equivalence.
When we inquire about ``other data'', which in mathematics are usually thought of as ``extra structure'', we want to factor $P$ as an equivalence followed by a specific type of forgetful functor, and the ``extra structure'' is what we are forgetting.

We want to give some meaning to ``inverting'' a functor $P$.
To begin, we introduce some terminology ---
some standard, some ad-hoc ---
that is convenient for our purposes.

\begin{defn}\label{invert def}
Suppose $P\colon \CC\to \DD$ is a functor.
\begin{itemize}
\item
An \emph{output} of $P$ is an object $y$ of $\DD$ for which there exists an object $x$ of $\CC$ such that $P(x)=y$.

\item
An \emph{essential output} of $P$ is an object $y$ of $\DD$ for which there exists an object $x$ of $\CC$ such that $P(x)\simeq y$.

\item
The \emph{image} of $P$ is the class of all outputs of $P$.

\item
The \emph{essential image} of $P$ is the class of all essential outputs of $P$.

\item
For an output $y$ of $P$,
the \emph{inverse image} of $y$ under $P$ is
the class $P\inv(y)$ of all objects $x$ of $\CC$ such that $P(x)=y$.

\item
For an essential output $y$ of $P$,
the \emph{essential inverse image} under $P$ of $y$ is
the class of all objects $x$ of $\CC$ such that $P(x)\simeq y$.

\item
If every object in $\DD$ is isomorphic to an output of $P$,
then $P$ is called \emph{essentially surjective}.

\item
If for all objects $x,y$ in $\CC$, the map $\mor(x,y)\to\mor(P(x),P(y))$
is surjective or injective,
then $P$ is \emph{full} or \emph{faithful}, respectively.

\item
If $P$ is essentially surjective, full, and faithful, then $P$ is a \emph{category equivalence} and has a \emph{quasi-inverse},
i.e., there is some functor $H\colon \DD\to \CC$ 
such that $H\circ P$ and $P\circ H$ are naturally isomorphic to the identity functors.

\item
$P$ is called \emph{conservative} if it reflects isomorphisms,
that is, if for every morphism $f$ in $\CC$ such that $P(f)$ is an isomorphism in $\DD$, then $f$ is an isomorphism in $\CC$.

\item
An \emph{inversion of $P$} is a commutative diagram
\[
\xymatrix{
\CC \ar[r]^{\wilde P} \ar[dr]_P
&\wilde \DD \ar[d]^F
\\
&\DD
}
\]
of functors such that
\begin{itemize}
\item[(i)] $\wilde P$ is an equivalence of categories;
\item[(ii)] $\wilde \DD$ is a category whose objects are pairs $(A,\sigma)$,
where $A$ is an object of $\DD$ and $\sigma$ denotes some extra structure;
\item[(iii)] $F$ is
defined by $F(A,\sigma)=A$ on objects, and is faithful.
\end{itemize}
\item
An inversion of $P$ (as above) is \emph{good}
if the image of $F$ is contained in the essential image of $P$,
and $F$ has the following
\emph{unique isomorphism lifting property}:
whenever $y\in \DD$ and $u\in F\inv(y)$,
for every isomorphism $\theta$ in $\DD$ with domain $y$
there is a unique isomorphism $\theta_u$ in $\wilde \DD$ with domain $u$
such that $F(\theta_u)=\theta$.
We write $\theta\cdot u$ for the codomain of $\theta_u$.
\end{itemize}
\end{defn}

We regard $F\colon\wilde \DD\to \DD$ as a special type of forgetful functor that ``forgets extra structure'',
and we regard any choice of quasi-inverse $H\colon \wilde \DD\to \CC$ of $\wilde P$ as ``inverting the process $P$''.

From a slightly different viewpoint, we can think about this as transforming $P$ into a forgetful functor $F$ by replacing its domain category $\CC$ by an equivalent category $\wilde \DD$, in a way that makes it clear what extra structure that is forgotten, i.e., what extra structure we need to invert the process.

We emphasize that the above is not an attempt to give a definition of inversion that is completely satisfactory to category theorists,
but we are rather describing a situation that is easily recognizable.
To shed further light on this, let us for a moment ignore part (ii) and (iii) of the definition of inversion,
and instead only require $F$ to be faithful,
so that it identifies $\mor(x,y)$ with a subset of $\mor(F(x),F(y))$.
Then one may think of an object $x$ in $\wilde \DD$ as having an underlying $\DD$-object $F(x)$,
plus some extra structure that $F$ forgets.
In this way,
one can think of morphisms $\mor(x,y)$ in $\wilde \DD$ as those morphisms in $\mor(F(x),F(y))$
that are ``compatible'' with this extra structure.
To simplify the description, we denote the extra structure by a symbol $\sigma$,
making the definition less rigorous.

Moreover, the requirements that $F$ is faithful and $\wilde P$ is an equivalence
mean that inversions only exist for functors $P$ that are faithful.
In fact, this is the only obstruction:
assuming $P\colon \CC\to \DD$ is faithful, define
a category $\wilde \DD$ whose objects comprise all pairs $(P(x),x)$ where $x$ is an object in $\CC$,
and in which a morphism from $(P(x),x)$ to $(P(y),y)$ is just a morphism $f\colon x\to y$ in $\CC$.
Define a functor $\wilde P\colon \CC\to \wilde \DD$ on objects by $\wilde P(x)=(P(x),x)$ and on morphisms by $\wilde P(f)=f$.
Then $\wilde P$ is actually an isomorphism of categories, 
and we have an inversion with
the forgetful functor $F\colon\wilde \DD\to \DD$ defined by $F(P(x),x)=P(x)$ and $F(f)=P(f)$.
This construction is rather artificial, and the point we wish to make in this paper is that inversions arise quite naturally, and can give useful information.

In a good inversion, the unique isomorphism lifting property implies that the forgetful functor $F$ will be
conservative.
It turns out that even for inversions that are not good the functor $F$ will frequently be conservative; for example, this will be the case in all the examples we consider in this paper.
Note that $F$ is conservative if and only if $P$ is conservative.

In general, the unique isomorphism lifting does not carry over from $F$ to $P$.
Indeed, suppose $F$ has this property.
For all $y \in \DD$, $u\in P\inv(y)$, and $\theta\in\isos_\DD(y,\cdot)$,
there only exists some $u' \simeq u$ and $\theta'\in\isos_\CC(u',\cdot)$ such that $P(\theta')=\theta$,
namely, $u'\in\wilde P\inv(\wilde P(u))$ and $\theta'\in\wilde P\inv(F(\theta_{\wilde P(u)}))$.
The isomorphism $u \to u'$ can be chosen in a canonical way for every choice of quasi-inverse $H$ for $\widetilde P$ by letting $u'=(H\circ\widetilde P)(u)$, and $\theta_{u'} = H(\theta_{\widetilde P(u)})$ is unique up to isomorphism.
In other words, for a good inversion to exist, $P$ must have a property that is very close to the unique lifting property in a category-theoretical sense (the requirement that the image of $F$ is contained in the essential image of $P$ does not impose any restrictions on what $P$ can be).

The unique isomorphism lifting property may of course be defined for \emph{any} functor,
and has presumably been studied,
but we could not find this precise property in the category theory literature.

The unique isomorphism lifting property is very close to the requirement that $F$ be a covering of the underlying groupoids,
except that we do not require that $F$ be surjective on objects.

Note that, even in good inversion, $F$ is not full on the underlying groupoids since the lift $\theta_u$ of $\theta$ does not necessarily belong to $\isos(u,v)$, but rather to $\isos(u,w)$ for some possibly different $w$ with $F(v)=F(w)$.

Suppose we have a good inversion of $P$.
We emphasize that we do \emph{not} assume that the objects of $\DD$ that are in the image of $F$ form a particularly large portion of the class of all objects of $\DD$; in particular, $F$ will typically not be essentially surjective, i.e., there typically will be objects of $\DD$ that are not isomorphic to anything in the image of $F$.
However, good inversion requires that the image of $F$ is \emph{isomorphism-closed} in the sense that any object of $\DD$ that is isomorphic to an output of $F$ is an output of $F$. It follows from this and the definition of good inversion that the essential image of $P$ coincides with the image of $F$.

\begin{prop}\label{good props}
Suppose we have a good inversion of $P$ as above.
Then for any essential output $y$ of $P$,
the essential inverse image of $y$ under $P$ is classified up to isomorphism
by the orbits of $F\inv(y)$ under the natural action of $\aut (y)$.
\end{prop}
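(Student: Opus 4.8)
The plan is to transport the classification problem along the equivalence $\widetilde P$, and then use the unique isomorphism lifting property to identify isomorphism classes inside a single fiber of $F$ with $\aut(y)$-orbits.

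First I would check that the natural $\aut(y)$-action on $F\inv(y)$ is well defined, since the statement refers to it. For $u\in F\inv(y)$ and $\theta\in\aut(y)$ the lift $\theta_u$ has codomain $\theta\cdot u$ with $F(\theta\cdot u)=\cod\theta=y$, so $\theta\cdot u\in F\inv(y)$; uniqueness of lifts gives $\id_y\cdot u=u$, and for $\theta'\in\aut(y)$ the isomorphism $(\theta')_{\theta\cdot u}\circ\theta_u$ has domain $u$ and maps under $F$ to $\theta'\theta$, hence equals $(\theta'\theta)_u$, so $(\theta'\theta)\cdot u=\theta'\cdot(\theta\cdot u)$. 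Also, since $y$ is an essential output of $P$, the remark preceding the proposition identifies $y$ as lying in the image of $F$, so $F\inv(y)\neq\emptyset$ (and if it were empty both sides of the asserted classification would be empty anyway).

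Next, since $\widetilde P$ is fully faithful and essentially surjective, $x\mapsto\widetilde P(x)$ induces a bijection between isomorphism classes of objects of $\CC$ and of $\widetilde\DD$; moreover $P(x)=F(\widetilde P(x))$, so $x$ lies in the essential inverse image of $y$ under $P$ if and only if $F(\widetilde P(x))\simeq y$. It therefore suffices to classify, up to isomorphism in $\widetilde\DD$, the objects $u$ with $F(u)\simeq y$. I would reduce this to the fiber $F\inv(y)$: writing $z=F(u)$ and choosing an isomorphism $\psi\colon z\to y$, the lift $\psi_u\colon u\to\psi\cdot u$ satisfies $F(\psi\cdot u)=\cod\psi=y$, so $\psi\cdot u\in F\inv(y)$ and $u\simeq\psi\cdot u$. (This reduction is needed precisely because the lifting property is stated only for objects lying exactly in a fiber $F\inv(\cdot)$, and that is the one point where care is required.)

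The heart of the argument is then the claim that for $u,u'\in F\inv(y)$ one has $u\simeq u'$ in $\widetilde\DD$ if and only if $u$ and $u'$ lie in the same $\aut(y)$-orbit. The ``if'' direction is immediate from the isomorphism $\theta_u\colon u\to\theta\cdot u$. For ``only if'', given an isomorphism $f\colon u\to u'$, the morphism $\theta:=F(f)$ is an automorphism of $y$ because $F(u)=F(u')=y$; then $f$ is an isomorphism with domain $u\in F\inv(y)$ with $F(f)=\theta$, so by the uniqueness clause of the lifting property $f=\theta_u$, whence $u'=\cod f=\theta\cdot u$. Combining the three steps, the map sending the isomorphism class of $x$ to the $\aut(y)$-orbit of any $u\in F\inv(y)$ with $u\simeq\widetilde P(x)$ is a well-defined bijection from the isomorphism classes of the essential inverse image of $y$ under $P$ onto $F\inv(y)/\aut(y)$; well-definedness, injectivity, and surjectivity each follow by chasing isomorphisms through $\widetilde P$ together with the claim just proved. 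I do not expect a genuine obstacle here: the proof is essentially bookkeeping, the only delicate point being the reduction to the exact fiber $F\inv(y)$ before invoking uniqueness of lifts.
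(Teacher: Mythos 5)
Your proof is correct and takes essentially the same route as the paper's (which is a two-sentence sketch transporting the problem across the equivalence and invoking unique isomorphism lifting); you simply make explicit the details the paper leaves implicit, namely well-definedness of the $\aut(y)$-action, the reduction from $F(u)\simeq y$ to the exact fiber $F\inv(y)$, and the identification of isomorphism within a fiber with lying in the same orbit.
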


\begin{proof}
For an object $x$ in $\CC$ we have
$P(x)\simeq y$
if and only if
there exists $u\in F\inv(y)$ such that $x\simeq H(u)$.
For two such $x_1,x_2$, with $x_i\simeq H(u_i)$, $u_i\in F\inv(y)$ ($i=1,2$),
we have
$x_1\simeq x_2$
if and only if
$u_1\simeq u_2$,
if and only if $u_2=\theta\cdot u_1$ for some automorphism $\theta$ of $y$.
\end{proof}

\section{Inverting the crossed-product process --- actions}\label{invert actions}

We will give three examples of (categorically) inverting the crossed-product process for actions.
In all three cases the objects
of the categories $\CC,\DD,\wilde \DD$
will remain the same, 
but 
in some sense the first example will have the fewest morphisms,
the second example the most,
and the third example somewhere in between.

Broadly speaking, we will start with a category $\CC$ of actions,
and the basic process will produce the full crossed product, 
which will be an object in a category $\DD$ of $C^*$-algebras,
and the  category $\wilde \DD$ will have objects comprising $C^*$-algebras with a coaction and a certain kind of equivariant map (see below).

In each example the objects of the category $\CC$ will 
be actions $(A,\alpha)$ of
$G$,
and the 
process $P$ will be 
a functor 
that takes an object
$(A,\alpha)$ 
to
the full crossed product $C^*$-algebra $A\rtimes_\alpha G$.
The objects of the 
category $\wilde \DD$ will be
equivariant maximal coactions
$(C,\delta,V)$,
i.e., 
$\delta$ is a maximal coaction of $G$ on a $C^*$-algebra $C$
and $V\colon C^*(G)\to M(C)$ is a nondegenerate $\delta_G-\delta$ equivariant homomorphism.
The  functor $\wilde P$ will take an object $(A,\alpha)$ to
$(A\rtimes_\alpha G,\what\alpha,i_G)$.

\subsection{Nondegenerate Landstad duality for actions}

This first example of inverting the process will be based upon the
\emph{nondegenerate category $\csn$ of $C^*$-algebras},
in which a morphism $\phi\colon C\to D$ is a nondegenerate homomorphism $\phi\colon C\to M(D)$.
A morphism $\phi$ is an isomorphism in the category if and only if it is a $C^*$-isomorphism in the usual sense.

The \emph{nondegenerate category $\acn$ of actions}
has
actions $(A,\alpha)$ of $G$ as objects,
and when we say
$\psi\colon (A,\alpha)\to (B,\beta)$ is a morphism in the category we mean
$\psi\colon A\to B$ is a morphism in $\csn$
that is $\alpha-\beta$ equivariant.
Isomorphisms in the category are equivariant $C^*$-isomorphisms.

The \emph{nondegenerate equivariant category $\wcon$ of coactions}
has
equivariant maximal coactions 
(see \defnref{fix coaction}) $(C,\delta,V)$ of $G$
as objects,
and when we say
$\psi\colon (C,\delta,V)\to (D,\epsilon,W)$ is a morphism in the category we mean
$\psi\colon C\to D$ is a morphism in $\csn$
that is $\delta-\epsilon$ equivariant
and satisfies
\[
W=\psi\circ V.
\]

The 
\emph{nondegenerate crossed-product functor $\cpn$} is given
on objects by
$(A,\alpha)\mapsto A\rtimes_\alpha G$,
and
on morphisms by
\[
\bigl(\phi\colon (A,\alpha)\to (B,\beta)\bigr)
\mapsto
\bigl(\phi\rtimes G\colon A\rtimes_\alpha G\to B\rtimes_\beta G\bigr),
\]
where we must keep in mind that $\phi\rtimes G$ is to be interpreted as a morphism in the nondegenerate category of $C^*$-algebras.

The \emph{nondegenerate equivariant crossed-product functor $\wcpn$} is given
on objects by
$(A,\alpha)\mapsto (A\rtimes_\alpha G,\what\alpha,i_G)$,
and
on morphisms by
\begin{multline*}
\bigl(\phi\colon (A,\alpha)\to (B,\beta)\bigr)
\mapsto
\\
\bigl(\phi\rtimes G\colon 
(A\rtimes_\alpha G,\what\alpha,i_G)\to (B\rtimes_\beta G,\what\beta,i_G\bigr),
\end{multline*}
where we recall that 
$\phi\rtimes G$ is $\what\alpha-\what\beta$ equivariant
and takes $i_G^\alpha$ to $i_G^\beta$.

The functor $\wcpn$ is an equivalence \cite[Theorem~5.1]{clda} and
$\cpn$ is the composition of $\wcpn$ followed by the forgetful functor $F\colon\wcon\to\csn$
defined on objects by $(C,\delta,V)\mapsto C$ and on morphisms by $f\mapsto f$.
Hence, $F$ is precisely the type of forgetful functor that fits into the framework of \secref{abstract},
and hence, this setup gives an inversion of the process $\cpn$.
We call this inversion \emph{nondegenerate Landstad duality for actions}.

Moreover, 
it follows from \cite[proofs of Theorems~4.1 and 5.1]{clda} that
a quasi-inverse of the nondegenerate equivariant crossed-product functor
is given by the \emph{nondegenerate fixed-point functor $\fin$}, defined
on objects by
$(C,\delta,V)\mapsto (C^{\delta,V},\alpha^V)$
(see \defnref{fix coaction} for notation),
and
on morphisms 
as follows:
if $\psi\colon (C,\delta,V)\to (D,\epsilon,W)$ is a morphism in $\wcon$,
then
\[
\fin(\psi)\colon (C^{\delta,V},\alpha^V)\to (D^{\epsilon,W},\alpha^W)
\]
is the unique morphism in $\acn$
such that
the diagram
\[
\xymatrix@C+30pt{
(C^{\delta,V}\rtimes_{\alpha^V} G,\what{\alpha^V},i_{C^{\delta,V}})
\ar[r]^-{\fin(\psi)\rtimes G} \ar[d]^\simeq
&(D^{\epsilon,W}\rtimes_{\alpha^W} G,\what{\alpha^W},i_{D^{\epsilon,W}})
\ar[d]_\simeq
\\
(C,\delta,V) \ar[r]_-\psi
&(D,\epsilon,W)
}
\]
commutes in $\wcon$,
where the vertical arrows are the canonical isomorphisms.

Since we have chosen the object map of $\fin$ to take an equivariant maximal coaction $(C,\delta,V)$ to the $C^*$-subalgebra $C^{\delta,V}$ of $M(C)$,
in our setting
the nondegenerate 
homomorphism
\[
\fin(\psi)\colon C^{\delta,V}\to M(D^{\epsilon,W})
\]
is the restriction of (the canonical extension to $M(C)$ of) $\psi$.
Thus,
the additional data required to recover the action from the full crossed product $A\rtimes_\alpha G$ consists of the dual coaction $\what\alpha$ and the canonical homomorphism $i_G$.

\begin{prop}\label{good nd}
The above nondegenerate Landstad duality is a good inversion.
\end{prop}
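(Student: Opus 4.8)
The plan is to verify the two conditions in the definition of a good inversion for the inversion of $\cpn$ given by $\wcpn\colon\acn\to\wcon$ and the forgetful functor $F\colon\wcon\to\csn$.

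\emph{The image of $F$ lies in the essential image of $\cpn$.} An object $C$ of $\csn$ lies in the image of $F$ exactly when there is a maximal coaction $\delta$ of $G$ on $C$ and a $\delta_G-\delta$ equivariant nondegenerate homomorphism $V\colon C^*(G)\to M(C)$, that is, when $(C,\delta,V)$ is an equivariant maximal coaction. By classical Landstad duality for full crossed products (\thmref{lda}), any such $C$ is isomorphic to $A\rtimes_\alpha G=\cpn(A,\alpha)$ for some action $(A,\alpha)$, so $C$ is an essential output of $\cpn$. This step is immediate once \thmref{lda} is in hand.

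\emph{Unique isomorphism lifting.} Fix an object $u=(C,\delta,V)$ of $\wcon$ and an isomorphism $\theta\colon C\to D$ in $\csn$ with domain $C$, i.e., a $C^*$-isomorphism, which I extend canonically to multiplier algebras. The natural candidate for the lift is obtained by transporting structure along $\theta$: set $\epsilon=(\theta\otimes\id)\circ\delta\circ\theta\inv\colon D\to M(D\otimes C^*(G))$ and $W=\theta\circ V\colon C^*(G)\to M(D)$. I would then check that $(D,\epsilon,W)$ is again an object of $\wcon$: injectivity, nondegeneracy, the coaction nondegeneracy condition, and the coaction identity for $\epsilon$ transport verbatim from $\delta$ along the isomorphism $\theta$; maximality of $\epsilon$ holds because $\theta$ is a conjugacy of the coactions $(C,\delta)$ and $(D,\epsilon)$, so the canonical surjections onto $C\otimes\KK(L^2(G))$ and $D\otimes\KK(L^2(G))$ correspond under (the double crossed product of) $\theta$, and one is an isomorphism iff the other is; and $W$ is nondegenerate and $\delta_G-\epsilon$ equivariant because $V$ is $\delta_G-\delta$ equivariant. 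Then $\theta$, regarded as a map $(C,\delta,V)\to(D,\epsilon,W)$, is by construction $\delta-\epsilon$ equivariant and satisfies $W=\theta\circ V$, hence is a morphism in $\wcon$, and it is an isomorphism there since $\theta\inv$ is a morphism in the reverse direction; moreover $F$ sends it to $\theta$. For uniqueness, note that a morphism of $\wcon$ is just an underlying morphism of $\csn$ satisfying the equivariance and intertwining constraints, and $F$ is the identity on underlying morphisms; so any lift $\theta'$ of $\theta$ with domain $u$ has underlying map $\theta$, its codomain is forced to be $(D,\epsilon,W)$ (equivariance forces $\epsilon'=(\theta\otimes\id)\circ\delta\circ\theta\inv$ and the intertwining relation forces $W'=\theta\circ V$), and therefore $\theta'$ equals the lift constructed above.

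I do not expect a genuine obstacle. The first condition is carried entirely by \thmref{lda}, and the second is just the observation that a coaction together with an equivariant map $C^*(G)\to M(C)$ is precisely the kind of extra structure that can be transported uniquely along a $C^*$-isomorphism. The only point meriting a careful sentence is the preservation of maximality under this transport, which I would handle via naturality of the canonical surjection as indicated above.
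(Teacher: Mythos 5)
Your proof is correct and follows essentially the same route as the paper: the first condition is discharged by classical Landstad duality (\corref{LD}), and the lifting property is obtained by transporting $(\delta,V)$ along $\theta$ to get an object $(D,\epsilon,W)$ of $\wcon$ and invoking faithfulness of $F$ for uniqueness. Your additional remarks (explicit formulas for $\epsilon$ and $W$, preservation of maximality, and the observation that the codomain of any lift is forced) merely spell out details the paper leaves implicit.
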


\begin{proof}
We must check 
that the image of $F$ is contained in the essential image of $\cpn$, and
the unique isomorphism lifting property.
The first follows immediately: if $C=F(C,\delta,V)$,
then by classical Landstad duality (\corref{LD}) there is an action $(A,\alpha)=\fin(C,\delta,V)$ such that $C\simeq \cpn(A,\alpha)$.
For the unique isomorphism lifting property,
given an object $(C,\delta,V)$ of $\wcon$
and an isomorphism $\theta\colon C\variso D$ in $\csn$,
we can use $\theta$ to carry the coaction $\delta$ and the homomorphism $V$
over to a coaction $\epsilon$ on $D$ and a $\delta_G-\epsilon$ equivariant nondegenerate homomorphism $W\colon C^*(G)\to M(D)$,
and then 
$\theta$ gives an isomorphism
\[
\wilde\theta\colon (C,\delta,V)\to (D,\epsilon,W)
\]
in $\wcon$ covering $\theta\colon C\to D$.
Since the forgetful functor is 
faithful
we see that $\wilde\theta$ is unique.
\end{proof}

\subsection{Enchilada Landstad duality for actions}

The \emph{enchilada category $\cse$ of $C^*$-algebras}
has
the same objects as $\csn$,
but now when we say
$[Y]\colon C\to D$ is a morphism in the category we mean
$[Y]$ is the isomorphism class of a nondegenerate $C-D$ correspondence $Y$.
Composition of morphisms is given by balanced tensor products,
and identity morphisms by the $C^*$-algebras themselves, viewed as correspondences in the standard way.
A morphism $[Y]$ is an isomorphism in the category if and only if $Y$ is an imprimitivity bimodule.

The \emph{enchilada category $\ace$ of actions}
has
the same objects as $\acn$,
but now when we say
$[X,\gamma]\colon (A,\alpha)\to (B,\beta)$ is a morphism in the category we mean
$[X]\colon A\to B$ in $\cse$
and $\gamma$ is an $\alpha-\beta$ compatible action of $G$ on $X$.
Isomorphisms in the category are equivariant Morita equivalences.

The \emph{enchilada equivariant category $\wcoe$ of coactions}
has
the same objects as $\wcon$,
but now when we say
$[Y,\zeta]\colon (C,\delta,V)\to (D,\epsilon,W)$ is a morphism in the category we mean
$[Y]\colon C\to D$ in $\cse$ and
$\zeta$ is a $\delta-\epsilon$ compatible coaction of $G$ on $Y$.
Note that this time the morphisms have nothing to do with the equivariant homomorphisms $V,W$.
In particular, isomorphisms in the category are just equivariant Morita equivalences.

While $\wcon$ defined in the previous subsection is a so-called comma category,
$\wcoe$ is sometimes loosely said to be a ``semi-comma category''.

The \emph{enchilada crossed-product functor $\cpe$} is
the same as $\cpn$
on objects,
but is given
on morphisms by
\[
\bigl([X,\gamma]\colon (A,\alpha)\to (B,\beta)\bigr)
\mapsto
\bigl([X\rtimes_\gamma G]\colon A\rtimes_\alpha G\to B\rtimes_\beta G\bigr).
\]

The \emph{enchilada equivariant crossed-product functor $\wcpe$} is 
the same as $\wcpn$
on objects,
but is given
on morphisms by
\begin{multline*}
\bigl([X,\gamma]\colon (A,\alpha)\to (B,\beta)\bigr)
\mapsto
\\
\bigl([X\rtimes_\gamma G,\what\gamma]\colon 
(A\rtimes_\alpha G,\what\alpha,i_G)\to (B\rtimes_\beta G,\what\beta,i_G\bigr).
\end{multline*}

Note that $\cpe$ is the composition of $\wcpe$ followed by the forgetful functor
$F\colon (C,\delta,V)\mapsto C$.

We will prove that, although $F$ and $\wcpe$ provide a way of inverting $\cpe$,
in this case we do not have a good inversion.
Propositions~\ref{fixed correspondence} and \ref{action up down} below, which we express in noncategorical terms, form the crux of the matter.

First we need ``generalized fixed-point correspondences":

\begin{prop}\label{fixed correspondence}
Let $(C,\delta,V)$ and $(D,\epsilon,W)$ be equivariant maximal coactions,
and let $(Y,\zeta)$ be a $(C,\delta)-(D,\epsilon)$ correspondence coaction.
Then there are a
$(C^\delta,\alpha^V)-(D^\epsilon,\alpha^W)$
correspondence action $(X,\gamma)$
and an isomorphism
\[
\Theta\colon (X\rtimes_\gamma G,\what\gamma)\simeq (Y,\zeta)
\]
of
$(C,\delta)-(D,\epsilon)$ correspondence coactions,
characterized by
\[
\Theta\bigl(i_X(x)\cdot i_G(d)\bigr)=x\cdot W(d)
\midtext{for}x\in X,d\in C^*(G).
\]
\end{prop}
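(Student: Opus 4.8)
The plan is to build the correspondence action $(X,\gamma)$ as a ``generalized fixed-point correspondence'' living inside the multiplier bimodule $M(Y)$, in exactly the way that $C^\delta$ sits inside $M(C)$ in \defnref{fix coaction}, and then to identify $X\rtimes_\gamma G$ with $Y$ using classical Landstad duality applied to the linking algebra. Concretely, pass to the linking algebra $L=L(Y)=\slink{\KK(Y)}{Y}{D}$. By \subsecref{linking}, the correspondence coaction $(Y,\zeta)$ determines a coaction $\nu=\slink{\mu}{\zeta}{\epsilon}$ of $G$ on $L$, with $\mu$ the induced coaction on $K=\KK(Y)$, and $\nu$ is maximal because $\epsilon$ is (by \cite[Proposition~2.5]{maximal}). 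The equivariant homomorphisms $V$ and $W$ assemble into a $\delta_G-\nu$ equivariant nondegenerate homomorphism $V_L=\slink{V'}{0}{W}\colon C^*(G)\to M(L)$, where $V'$ is the homomorphism $C^*(G)\to M(K)$ coming from $V$ via the left action $C\to M(K)=\LL(Y)$; one checks the corner-compatibility so that $(L,\nu,V_L)$ is an equivariant maximal coaction.

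Next I would apply \thmref{lda} (classical Landstad duality) to $(L,\nu,V_L)$: the generalized fixed-point algebra $L^{\nu,V_L}\subset M(L)$ carries the inner action $\alpha^{V_L}=\ad V_L$, and $L^{\nu,V_L}\rtimes_{\alpha^{V_L}}G\simeq L$ $\what{\alpha^{V_L}}-\nu$ equivariantly, with $i_G\mapsto V_L$. The key structural point is that Landstad's conditions \eqref{fix}--\eqref{continuous} respect the matrix decomposition of $M(L)\cong\slink{\LL(Y)}{M(Y)}{M(D)}$, because $\nu$, $V_L$, and hence $\ad V_L$ are all ``diagonal/corner-preserving''. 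Therefore $L^{\nu,V_L}$ is itself a linking algebra $\slink{K^{\mu,V'}}{X}{D^{\epsilon,W}}$, where $X\subset M(Y)$ is the $(1,2)$-corner --- the set of $x\in M(Y)$ satisfying the evident correspondence analogues of \eqref{fix}--\eqref{continuous} --- and $K^{\mu,V'}$ acts on $X$ on the left. Restricting $\ad V_L$ to corners gives $\sigma=\ad V'$ on $K^{\mu,V'}$, $\alpha^W$ on $D^{\epsilon,W}$, and a compatible action $\gamma$ on $X$, so $(X,\gamma)$ is a $(C^\delta,\alpha^V)-(D^\epsilon,\alpha^W)$ correspondence action (using that the left coefficient algebra $K^{\mu,V'}$ receives a nondegenerate equivariant map from $C^{\delta,V}$, which follows by restricting $\phi_C\colon C\to M(K)$ to fixed-point algebras). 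Then the linking-algebra description of crossed products (\subsecref{linking}) identifies $L^{\nu,V_L}\rtimes_{\alpha^{V_L}}G\simeq L$ with $\slink{K^{\mu,V'}\rtimes_\sigma G}{X\rtimes_\gamma G}{D^{\epsilon,W}\rtimes_{\alpha^W}G}$, and reading off the $(1,2)$-corner of this isomorphism produces the desired $\Theta\colon X\rtimes_\gamma G\siso Y$. That $\Theta$ intertwines $\what\gamma$ and $\zeta$ follows from the corresponding statement for $\what{\alpha^{V_L}}$ and $\nu$, restricted to corners, and the formula $\Theta(i_X(x)\cdot i_G(d))=x\cdot W(d)$ is the corner component of $\Theta_L\circ i_G=V_L$ together with $\Theta_L\circ i_L=$ inclusion.

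The main obstacle, I expect, is verifying carefully that passing to Landstad's fixed-point algebra genuinely decomposes along the corners of the linking algebra: one must check that an element of $M(L)$ satisfies \eqref{fix}--\eqref{continuous} with respect to $(\nu,V_L)$ if and only if each of its matrix entries satisfies the appropriate analogue, and in particular that the $(1,2)$-corner condition is exactly ``$\zeta(x)=x\otimes 1$, $xW(f),V'(f)x\in Y$, and $s\mapsto\ad V_L{}_s(x)$ norm continuous''. This is where the hypothesis that $\epsilon$ (hence $\nu$, $\mu$) is \emph{maximal} is essential, since it is what licenses the use of \thmref{lda} (which requires a maximal coaction) rather than only the normal version. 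The remaining verifications --- that $(X,\gamma)$ is nondegenerate on the left, that $V_L$ is $\delta_G-\nu$ equivariant and nondegenerate, and that $\Theta$ has the claimed generating behaviour and is uniquely determined by it --- are routine given the machinery already assembled in \subsecref{linking} and \subsecref{class-act}, and I would present them compactly rather than in full detail.
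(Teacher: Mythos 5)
Your proposal is correct and follows essentially the same route as the paper's proof: pass to the linking algebra $L(Y)$ with the coaction $\nu=\slink{\mu}{\zeta}{\epsilon}$ and the diagonal homomorphism $\slink{\varphi_C\circ V}{0}{W}$, observe that the generalized fixed-point algebra decomposes along the corner projections as $\slink{K^{\mu}}{X}{D^{\epsilon,W}}$ with the inner action restricting to each corner, apply classical Landstad duality (\thmref{lda}) to the linking algebra, and read off the $(1,2)$-corner of the resulting isomorphism to obtain $\Theta$. The point you flag as the main obstacle --- that the fixed-point algebra respects the matrix decomposition --- is handled in the paper by noting that $p,q$ lie in $M(L^{\nu})$, which is exactly the verification you describe.
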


\begin{proof}
Recall from \subsecref{linking}
there is an associated maximal coaction $\mu$ on the algebra $K:=\KK(Y)$ of generalized compact operators on $Y$, and a maximal coaction
$\nu=\smtx{\mu&\zeta\\{*}&\epsilon}$ on the linking algebra 
$L:=L(Y)=\smtx{K&Y\\{*}&D}$ (where we do not bother to explicitly indicate the lower-left corners, since they take care of themselves).
The composition
\[
U:=\varphi_C\circ V\colon C^*(G)\to M(K)
\]
(where $\varphi_C\colon C\to M(K)$ is the homomorphism associated to the left-module structure)
and the homomorphism $Z:=\smtx{U&0\\0&W}$ are equivariant from $\delta_G$ to $\mu$ and $\nu$, respectively.
The projections $p=\smtx{1&0\\0&0},q=\smtx{0&0\\0&1}\in M(L)$ are in the multiplier algebra of the generalized fixed-point algebra $L^\nu$, giving a matrix decomposition
\[
L^\nu=\mtx{K^\mu&X\\{*}&D^\epsilon},
\]
where we define $X=pL^\nu q$.
Thus $X$ is a $K^\mu-D^\epsilon$ Hilbert bimodule,
and hence a $C^\delta-D^\epsilon$ correspondence,
incorporating the nondegenerate homomorphism 
$\fin(\varphi_C)\colon C^\delta\to M(K^\mu)$.

Now, $L$ carries an inner action $\ad Z$.
The projections $p,q\in M(L)$ are $\ad Z$-invariant,
and the restrictions on the diagonal corners are
\[
\ad Z|_{pLp}=\ad U\midtext{and}\ad Z|_{qLq}=\ad W,
\]
so $\ad Z$ decomposes as a matrix of actions
\[
\ad Z=\mtx{\ad U&\ad (U,W)\\{*}&\ad W}
\]
(where $\ad (U,W)$ denotes the action of $G$ on $Y$ whose value at $s\in G$ is the operator $x\mapsto U_s\cdot x\cdot W_s^*$).
Thus the restriction $\alpha^Z=\ad Z|_{L^\nu}$ decomposes as a matrix
\[
\alpha^Z=\mtx{\alpha^U&\gamma\\{*}&\alpha^W}
\]
(where we define $\gamma_s=\alpha^Z_s|_X$ for $s\in G$).
Moreover, $\gamma$ is $\alpha^U-\alpha^W$ compatible,
and hence $\alpha^V-\alpha^W$ compatible,
incorporating 
$\fin(\varphi_C)$
again.

Classical Landstad duality (\corref{LD})
gives an isomorphism
\[
\theta_L\colon \bigl(L^\nu\rtimes_{\alpha^Z} G,\what{\alpha^Z}\bigr)\iso (L,\nu).
\]
On the other hand,
since the projection $p$ is $\alpha^Z$-invariant,
the projection $i_{L^\nu}(p)$ gives a matrix decomposition
\[
L^\nu\rtimes_{\alpha^Z} G=\mtx{K^\mu\rtimes_{\alpha^U} G&X\rtimes_\gamma G
\\{*}&D^\epsilon\rtimes_{\alpha^W} G}
\]
of full crossed products
and
\[
\what{\alpha^Z}=\mtx{\what{\alpha^U}&\what\gamma\\{*}&\what{\alpha^W}}
\]
of dual coactions.
Further, $\theta_L$ preserves the corner projections.
Thus $\theta_L$ restricts on the corners to a
$\what\gamma-\zeta$ equivariant
Hilbert-bimodule isomorphism
\[
(\theta_K,\Theta,\theta_D)\colon 
\bigl(K^\mu\rtimes_{\alpha^U} G,X\rtimes_\gamma G,D^\epsilon\rtimes_{\alpha^W} G\bigr)\variso (K,Y,D).
\]
We also have an $\what{\alpha^V}-\delta$ equivariant isomorphism 
$\theta_C\colon C^\delta\rtimes_{\alpha^V} G\variso C$,
and
\[\theta_K\circ \varphi_{C^\delta\rtimes_{\alpha^V} G}
=\varphi_C\circ \theta_C
\]
by nondegenerate Landstad duality.
Thus, incorporating the isomorphisms $\theta_C$ and $\theta_D$,
$\Theta$ is an isomorphism of $(C,\delta)-(D,\epsilon)$ correspondence coactions.

For the other part,
we have
\[
pi_{L^\nu}q=i_X
\midtext{and}qZ=qZ q=W,
\]
and it follows that for $x\in X,d\in C^*(G)$ we have
\[
\Theta\bigl(i_X(x)\cdot i_G(d)\bigr)=x\cdot W(d).
\qedhere
\]
\end{proof}

\begin{notn}\label{fix corr def}
We denote the $C^\delta-D^\epsilon$ correspondence $X$ constructed in the above proof
by $Y^{\zeta,V,W}$, or just $Y^\zeta$ if confusion is unlikely,
and we denote the action $\gamma$ by $\alpha^{V,W}$.
\end{notn}

\begin{prop}\label{action up down}
Let $(A,\alpha)$ and $(B,\beta)$ be actions,
and let $(X,\gamma)$ be an $(A,\alpha)-(B,\beta)$ correspondence action.
Then
$i_X\colon X\to M(X\rtimes_\gamma G)$ gives
an isomorphism
\[
(X,\gamma)
\iso 
\bigl((X\rtimes_\gamma G)^{\what\gamma},\alpha^{i_G^\alpha,i_G^\beta}\bigr)
\]
of $(A,\alpha)-(B,\beta)$ correspondence actions.
\end{prop}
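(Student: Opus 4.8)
The plan is to exhibit both $(X,\gamma)$ and $\bigl((X\rtimes_\gamma G)^{\what\gamma},\alpha^{i_G^\alpha,i_G^\beta}\bigr)$ as upper-right corners of a single linking algebra, and to deduce the asserted isomorphism by restricting classical Landstad duality for that linking algebra to its corners. Following \subsecref{linking}, I would put $K=\KK(X)$, let $\sigma$ be the associated action of $G$ on $K$, form the linking algebra $L=L(X)=\smtx{K&X\\{*}&B}$, and give it the action $\tau=\smtx{\sigma&\gamma\\{*}&\beta}$. Since $\what\tau$ is maximal, the Example following \defnref{fix coaction} applies to $(L,\tau)$: the canonical homomorphism $i_L\colon L\to(L\rtimes_\tau G)^{\what\tau,i_G^\tau}$ is a $\tau-\tau^{i_G^\tau}$ equivariant $C^*$-isomorphism, carrying the corner projections of $M(L)$ to those of $M(L\rtimes_\tau G)$. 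I would also invoke the identification of \subsecref{linking},
\[
(L\rtimes_\tau G,\what\tau)=\left(\mtx{K\rtimes_\sigma G&X\rtimes_\gamma G\\{*}&B\rtimes_\beta G},\mtx{\what\sigma&\what\gamma\\{*}&\what\beta}\right),
\]
under which $i_L$ restricts on the three corners to the canonical correspondence homomorphisms $i_K$, $i_X$, $i_B$, and $i_G^\tau=\smtx{i_G^\sigma&0\\0&i_G^\beta}$.

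The crux is to check that the construction of \propref{fixed correspondence}, applied to the correspondence coaction $(Y,\zeta)=(X\rtimes_\gamma G,\what\gamma)$ between the equivariant maximal coactions $(A\rtimes_\alpha G,\what\alpha,i_G^\alpha)$ and $(B\rtimes_\beta G,\what\beta,i_G^\beta)$, reproduces exactly this picture. Concretely, I would verify that the coactions it induces on $\KK(Y)=K\rtimes_\sigma G$ and on $L(Y)=L\rtimes_\tau G$ are $\what\sigma$ and $\what\tau$ (by the displayed identification and the uniqueness in \subsecref{linking}); that its homomorphism $Z=\smtx{U&0\\0&W}$ has $W=i_G^\beta$ and $U=\varphi_C\circ i_G^\alpha$, where $\varphi_C\colon A\rtimes_\alpha G\to M(K\rtimes_\sigma G)$ is the left-module homomorphism of $Y$; and that $\varphi_C=\phi_A\rtimes G$ (for $\phi_A\colon A\to M(K)$ as in \subsecref{linking}), so that $U=(\phi_A\rtimes G)\circ i_G^\alpha=i_G^\sigma$ and hence $Z=i_G^\tau$. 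With these in hand, the generalized fixed-point algebra $L^\nu$ produced in that proof is $(L\rtimes_\tau G)^{\what\tau,i_G^\tau}$, the action $\alpha^Z$ is $\tau^{i_G^\tau}$, and cutting down by the corner projections shows that $(X\rtimes_\gamma G)^{\what\gamma}$ is the upper-right corner of $(L\rtimes_\tau G)^{\what\tau,i_G^\tau}$ while $\alpha^{i_G^\alpha,i_G^\beta}$ is the restriction of $\tau^{i_G^\tau}$ to that corner.

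Given the above, the conclusion follows by restricting the $C^*$-isomorphism $i_L$ to corners: it takes $X=pLq$ bijectively onto $(X\rtimes_\gamma G)^{\what\gamma}$, preserves the Hilbert-bimodule structure, and (being $\tau-\tau^{i_G^\tau}$ equivariant) intertwines $\gamma$ with $\alpha^{i_G^\alpha,i_G^\beta}$; its diagonal-corner restrictions are the isomorphisms $i_K\colon K\variso(K\rtimes_\sigma G)^{\what\sigma}$ and $i_B\colon B\variso(B\rtimes_\beta G)^{\what\beta}$, and running the same bookkeeping as in the proof of \propref{fixed correspondence} (using nondegenerate Landstad duality to relate $i_A$, $\phi_A$ and $i_K$) upgrades $i_X$ to an isomorphism of $(A,\alpha)-(B,\beta)$ correspondence actions, as required. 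I expect the main obstacle to be the middle paragraph: confirming that the abstract construction of \propref{fixed correspondence} collapses to the concrete linking-algebra picture --- above all, identifying its homomorphism $Z$ with the canonical $i_G^\tau$ (equivalently, $\varphi_C\circ i_G^\alpha=i_G^\sigma$) and tracking the induced coactions and corner projections through the identification of \subsecref{linking}. Each of these points only requires unwinding a definition, but none is entirely automatic.
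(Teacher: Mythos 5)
Your proposal is correct and follows essentially the same route as the paper: form the linking algebra $(L,\tau)$, identify $(L\rtimes_\tau G,\what\tau)$ as a matrix of crossed products and dual coactions, apply classical Landstad duality to get the equivariant isomorphism $i_L\colon (L,\tau)\variso\bigl((L\rtimes_\tau G)^{\what\tau},\alpha^{i_G^\tau}\bigr)$, restrict to corners, and use nondegenerate Landstad duality to incorporate $i_A$ and $i_B$. Your middle paragraph merely spells out the identification of the abstract construction of \propref{fixed correspondence} (in particular $Z=i_G^\tau$ and the corner decomposition of the fixed-point algebra) that the paper's proof asserts implicitly via its displayed matrix decomposition of $(L\rtimes_\tau G)^{\what\tau}$.
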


\begin{proof}
We have associated actions
$(K=\KK(X),\sigma)$ and $(L=L(X),\tau)$,
with $\tau=\slink \sigma \gamma \beta$,
a dual coaction
\[
(L\rtimes_\tau G,\what\tau)
=\left(\link{K\rtimes_\sigma G}{X\rtimes_\gamma G}{B\rtimes_\beta G},
\link{\what\sigma}{\what\gamma}{\what\beta}\right)
\]
on the full crossed product,
and an isomorphism
\[
i_{L}\colon (L,\tau)\iso
\bigl((L\rtimes_\tau G)^{\what\tau},\alpha^{i_G^\tau}\bigr)
\]
by classical Landstad duality.
On the other hand, we also have a 
decomposition
\[
(L\rtimes_\tau G)^{\what\tau}
=\link
{(K\rtimes_\sigma G)^{\what\sigma}}
{(X\rtimes_\gamma G)^{\what\gamma}}
{(B\rtimes_\beta G)^{\what\beta}},
\]
and $i_{L}$ preserves the corner projections.
Thus $i_{L}$ restricts on the corners to
a
$\gamma-\alpha^{i_G^\sigma,i_G^\beta}$ equivariant
Hilbert-bimodule isomorphism
\[
(i_K,i_X,i_B)\colon 
(K,X,B)\iso
\bigl((K\rtimes_\sigma G)^{\what\sigma},
(X\rtimes_\gamma G)^{\what\gamma},
(B\rtimes_\beta G)^{\what\beta}\bigr).
\]
We also have an isomorphism $i_A\colon A\iso (A\rtimes_\alpha G)^{\what\alpha}$,
and
\[
i_K\circ \varphi_A=\varphi_{(A\rtimes_\alpha G)^{\what\alpha}}\circ i_A
\]
by nondegenerate Landstad duality.
Thus, incorporating the isomorphisms $i_A$ and $i_B$,
$i_X$ is an isomorphism of $(A,\alpha)-(B,\beta)$ correspondence actions.
\end{proof}

\begin{thm}\label{enchilada invert}
The enchilada equivariant crossed-product functor $\wcpe$ is an equivalence,
and there is a
quasi-inverse
$\fie\colon \wcoe\to \ace$
with
morphism map
\begin{multline*}
\bigl([Y,\zeta]\colon (C,\delta,V)\to (D,\epsilon,W)\bigr)
\\\mapsto
\bigl([Y^\zeta,\alpha^Y]\colon (C^\delta,\alpha^V)\to (D^\epsilon,\alpha^W)\bigr)
\end{multline*}
and the same object map as $\fin\colon \wcon\to \acn$.
\end{thm}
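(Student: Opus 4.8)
The plan is to check that $\wcpe$ is essentially surjective, full, and faithful, and then to exhibit the functor $\fie$ and verify it is a quasi-inverse; Propositions~\ref{fixed correspondence} and~\ref{action up down} are exactly the ``correspondence'' replacements for the generalized-fixed-point constructions used in the nondegenerate case, so the argument runs parallel to \cite[proofs of Theorems~4.1 and 5.1]{clda}.

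First, essential surjectivity is inherited from the nondegenerate case: $\wcpe$ has the same object map as $\wcpn$, which is an equivalence by \cite[Theorem~5.1]{clda}, and since $\wcon$ and $\wcoe$ have the same objects and a $\delta-\epsilon$ equivariant $C^*$-isomorphism preserving the distinguished maps is in particular an equivariant imprimitivity bimodule (hence an isomorphism in $\wcoe$), every object of $\wcoe$ is isomorphic there to one of the form $\wcpe(A,\alpha)$. For fullness, let $[Y,\zeta]\colon (A\rtimes_\alpha G,\what\alpha,i_G)\to(B\rtimes_\beta G,\what\beta,i_G)$ be a morphism in $\wcoe$. Applying \propref{fixed correspondence} with $V=W=i_G$ yields a $(C^\delta,\alpha^V)-(D^\epsilon,\alpha^W)$ correspondence action $(Y^\zeta,\alpha^Y)$ — here $(C^\delta,\alpha^V)=((A\rtimes_\alpha G)^{\what\alpha},\alpha^{i_G})$ and similarly for $D$ — together with an isomorphism $(Y^\zeta\rtimes_{\alpha^Y}G,\what{\alpha^Y})\simeq(Y,\zeta)$ of correspondence coactions compatible with the canonical Landstad isomorphisms and with $i_G$; transporting $(Y^\zeta,\alpha^Y)$ along the Landstad isomorphisms $i_A\colon A\to(A\rtimes_\alpha G)^{\what\alpha}$ and $i_B$ produces a correspondence action $(A,\alpha)\to(B,\beta)$ whose image under $\wcpe$ is isomorphic in $\wcoe$ to $[Y,\zeta]$. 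For faithfulness, \propref{action up down} says $i_X$ implements an isomorphism $(X,\gamma)\simeq((X\rtimes_\gamma G)^{\what\gamma},\alpha^{i_G^\alpha,i_G^\beta})$ of correspondence actions compatible with $i_A$ and $i_B$; since the passage from $(Y,\zeta)$ to $(Y^\zeta,\alpha^Y)$ depends only on the isomorphism class of $(Y,\zeta)$ (it is built out of linking algebras and generalized fixed-point algebras, both of which respect isomorphisms), this shows that two correspondence actions with the same $\wcpe$-image must be equal as enchilada morphisms. Hence $\wcpe$ is an equivalence.

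It remains to check that $\fie$ — with the object map of $\fin$ and the morphism map $[Y,\zeta]\mapsto[Y^\zeta,\alpha^Y]$ — is a functor and a quasi-inverse of $\wcpe$. Well-definedness of the morphism map on isomorphism classes is the functoriality of the linking-algebra and fixed-point constructions just noted; identities are preserved by inspection; and preservation of composition amounts to a natural isomorphism $(Y\otimes_D Z)^{\zeta\otimes\eta}\simeq Y^\zeta\otimes_{D^\epsilon}Z^\eta$ of correspondence actions, which, using faithfulness of $\wcpe$ together with \propref{fixed correspondence} and \propref{action up down} to compare crossed products, it suffices to verify after applying $\wcpe$, where it becomes the compatibility of the crossed product of correspondences with balanced tensor products. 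Finally, the quasi-inverse conditions hold: \propref{action up down} gives a natural isomorphism $\id_{\ace}\cong\fie\circ\wcpe$ whose component at $(A,\alpha)$ is $i_A\colon A\to(A\rtimes_\alpha G)^{\what\alpha}$ and whose naturality squares are filled by the isomorphisms $i_X$, and \corref{LD} together with \propref{fixed correspondence} gives a natural isomorphism $\wcpe\circ\fie\cong\id_{\wcoe}$ whose component at $(C,\delta,V)$ is the Landstad isomorphism $C^{\delta,V}\rtimes_{\alpha^V}G\simeq C$ and whose naturality squares are filled by the isomorphisms $\Theta$ of \propref{fixed correspondence}.

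I expect the main obstacle to be the verification that $\fie$ respects composition of enchilada morphisms: this requires unwinding the linking-algebra construction of $Y^\zeta$ and matching it against balanced tensor products, and the device that makes it manageable is to test the desired isomorphism after applying $\wcpe$, reducing it to \propref{fixed correspondence} and \propref{action up down}, which have already been established.
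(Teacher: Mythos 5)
Your proposal is correct and follows essentially the same route as the paper: essential surjectivity inherited from the nondegenerate case, fullness from Proposition~\ref{fixed correspondence}, faithfulness from Proposition~\ref{action up down}, and the quasi-inverse read off from those constructions. The only difference is that you spell out the functoriality of $\fie$ (reducing the tensor-product compatibility $(Y\otimes_D Z)^{\zeta\otimes\eta}\simeq Y^\zeta\otimes_{D^\epsilon}Z^\eta$ to a check after applying the faithful functor $\wcpe$), whereas the paper treats this as automatic from general properties of category equivalences --- the same device, made explicit.
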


\begin{proof}
It is clear that $\wcpe\colon \ace\to \wcoe$ is essentially surjective, because it is essentially surjective for the nondegenerate categories, which have the same objects and in which isomorphism is stronger than in the enchilada categories.
To see that $\wcpe$ is an equivalence, we must show that,
for any two objects $(A,\alpha),(B,\beta)$ in $\ace$,
$\wcpe$ takes the set of morphisms
$\mor((A,\alpha),(B,\beta))$
bijectively onto
$\mor((A\rtimes_\alpha G,\what\alpha,i_G),(B\rtimes_\beta G,\what\beta,i_G))$.

For injectivity,
if $[X,\gamma]\colon (A,\alpha)\to (B,\beta)$ in $\ace$,
it suffices to note that 
\propref{action up down}
tells us that 
$(X,\gamma)$ can be recovered up to isomorphism from the crossed product.

We turn to the surjectivity.
If $[Y,\zeta]\colon (A\rtimes_\alpha G,\what\alpha,i_G)\to
(B\rtimes_\beta G,\what\beta,i_G)$
in $\wcoe$,
\propref{fixed correspondence} (and \notnref{fix corr def})
give an isomorphism
\[
\Theta\colon 
(Y^\zeta\rtimes_{\alpha^{i_G^\alpha,i_G^\beta}} G,\what{\alpha^{i_G^\alpha,i_G^\beta}})
\iso (Y,\zeta)
\]
of $(A\rtimes_\alpha G,\what\alpha)-(B\rtimes_\beta G,\what\beta)$
correspondence coactions.
Now, $(Y^\gamma,\alpha^{i_G^\alpha,i_G^\beta})$ is an
\[
\bigl((A\rtimes_\alpha G)^{\what\alpha},\alpha^{i_G^\alpha}\bigr)-
\bigl((B\rtimes_\beta G)^{\what\beta},\alpha^{i_G^\beta}\bigr)
\]
correspondence action.
Incorporating the isomorphisms
\begin{align*}
(A,\alpha)&\simeq \bigl((A\rtimes_\alpha G)^{\what\alpha},\alpha^{i_G^\alpha}\bigr)
\\(B,\beta)&\simeq \bigl((B\rtimes_\beta G)^{\what\beta},\alpha^{i_G^\beta}\bigr)
\end{align*}
from classical Landstad duality
(\thmref{lda}),
$(Y^\zeta,\alpha^{i_G^\alpha,i_G^\beta})$ becomes an $(A,\alpha)-(B,\beta)$ correspondence action,
whose 
full
crossed product is isomorphic to the given coaction $(Y,\zeta)$.
Finally, the assertions regarding the quasi-inverse $\fie$ follow immediately from the above constructions.
\end{proof}

Let $F\colon\wcoe\to\cse$ denote the forgetful functor defined on objects by $(C,\delta,V)\mapsto C$.
The factoring of $\cpn$ into a composition of $\wcpn$ followed by $F$ gives an inversion of $\cpe$,
which we call \emph{enchilada Landstad duality for actions}.

\begin{rem}
Enchilada Landstad duality for actions is \emph{not} a good inversion.
For if it were, the image of the forgetful functor
$(C,\delta,V)\mapsto C$ from $\wcoe$ to $\coe$
would coincide with the essential image of $\cpe$,
and it would follow that any $C^*$-algebra $C$ that is Morita equivalent to a full crossed product $A\rtimes_\alpha G$ would have extra structure $\delta,V$ such that $(C,\delta,V)$ is an object in $\wcoe$.
But then by classical Landstad duality $C$ would be $C^*$-isomorphic to a full crossed product.
We can easily see that this is false in general ---
for instance, if $G$ is finite of even order
then every finite-dimensional $C^*$-algebra isomorphic to a
crossed product by $G$ would have even dimension,
while every finite-dimensional $C^*$-algebra is Morita equivalent to one of odd dimension.

Nevertheless, it is still the case that 
the forgetful functor $F\colon \wcoe\to \cse$ is conservative, i.e.,
a morphism $[Y]\colon (C,\delta,V)\to (D,\epsilon,W)$ in $\wcoe$ is an isomorphism in the category if and only if
its image
$[Y]\colon C\to D$
under the forgetful functor
is an isomorphism in $\cse$, i.e., $Y$ is a $C-D$ imprimitivity bimodule
--- the problem is that isomorphisms do not always lift. 
Interestingly, the enchilada crossed-product functor $\cpe$ has a special property: it is essentially surjective, because
every $C^*$-algebra is Morita equivalent to a full crossed product,
by crossed-product duality.
In fact, we could use the dual crossed product
\[
(C,\delta,V)\mapsto (C\rtimes_\delta G,\what\delta)
\]
as an alternative quasi-inverse of $\wcpe$,
again by the properties of crossed-product duality.
This has the following consequence:
the map $(C,\delta,V)\mapsto (C,\delta)$ extends to an equivalence of enchilada categories that is actually surjective on objects.
\end{rem}

\subsection{Outer Landstad duality for actions}\label{outer duality actions}

We have seen that nondegenerate 
Landstad duality is a good inversion,
whereas enchilada Landstad duality is not.
In some sense the problem with the latter is that we have too many morphisms.
This led us to wonder whether there might be intermediate categories,
with more morphisms than the nondegenerate but fewer than the enchilada,
where good inversion is possible.
Here we present a nontrivial example of such an intermediate choice of morphisms.
However, this time the domain and target categories involve an asymmetrical choice of morphisms ---
in the domain category $\CC$ of actions we start with nondegenerate equivariant maps and throw in outer conjugacy,
while in the  category $\wilde \DD$
we require the coaction-equivariant maps to respect the generalized fixed-point algebras in some sense.

We base this third example of inverting the process upon the nondegenerate category $\csn$ of $C^*$-algebras, as we did for nondegenerate Landstad duality.
The \emph{outer category $\aco$ of actions}
has the same objects as $\acn$,
but now when we say $(\phi,u)\colon (A,\alpha)\to (B,\beta)$ is a morphism in the category
we mean $u$ is a $\beta$-cocycle and
$\phi\colon A\to B$ is a morphism in $\csn$
that is $\alpha-\ad u\circ \beta$ equivariant.

\begin{lem}\label{outer well defined}
The category $\aco$ introduced above is well-defined.
\end{lem}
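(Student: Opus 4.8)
The plan is to check the category axioms directly; since morphisms of $\aco$ are genuine pairs $(\phi,u)$ and not equivalence classes, the only content is exhibiting identities, a composition law that again lands in $\aco$, associativity, and the unit laws. For the identity on $(A,\alpha)$ I would take $(\id_A,\mathbf 1)$, where $\mathbf 1\colon G\to M(A)$ is the constant unitary $s\mapsto 1$: this is trivially an $\alpha$-cocycle with $\ad\mathbf 1\circ\alpha=\alpha$, and $\id_A$ is $\alpha-\alpha$ equivariant. For composition, given $(\phi,u)\colon(A,\alpha)\to(B,\beta)$ and $(\psi,v)\colon(B,\beta)\to(C,\gamma)$, I would set
\[
(\psi,v)\circ(\phi,u):=\bigl(\bar\psi\circ\phi,\;w\bigr),\qquad w_s:=\bar\psi(u_s)\,v_s,
\]
where $\bar\psi\colon M(B)\to M(C)$ denotes the strictly continuous unital extension of the nondegenerate homomorphism $\psi$ (the composite $\bar\psi\circ\phi$ is then a morphism in $\csn$, which is exactly why $\csn$ is a category).

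Two points then need verification. First, $w$ is a $\gamma$-cocycle: applying $\bar\psi$ to the identity $u_{st}=u_s\beta_s(u_t)$ and using the equivariance $\psi\circ\beta_s=\ad v_s\circ\gamma_s\circ\psi$ (extended to multipliers) shows that $s\mapsto\bar\psi(u_s)$ is an $\ad v\circ\gamma$-cocycle; since $v$ is a $\gamma$-cocycle, \lemref{cocycle multiply} then shows that $w$ is a $\gamma$-cocycle. (Equivalently, one checks $w_{st}=w_s\gamma_s(w_t)$ by hand, which is literally the computation in the proof of that lemma.) Second, $\bar\psi\circ\phi$ is $\alpha-\ad w\circ\gamma$ equivariant: combining $\phi\circ\alpha_s=\ad u_s\circ\beta_s\circ\phi$ with $\bar\psi\circ\bar\beta_s=\ad v_s\circ\bar\gamma_s\circ\bar\psi$ and $\bar\psi\circ\ad u_s=\ad\bar\psi(u_s)\circ\bar\psi$ yields
\[
(\bar\psi\circ\phi)\circ\alpha_s=\ad\bigl(\bar\psi(u_s)v_s\bigr)\circ\gamma_s\circ(\bar\psi\circ\phi),
\]
as required. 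Hence $(\bar\psi\circ\phi,w)$ is indeed a morphism $(A,\alpha)\to(C,\gamma)$ in $\aco$.

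It remains to check associativity and the unit laws. For $(\phi,u)$, $(\psi,v)$, $(\chi,t)$ composable in that order, the underlying $\csn$-morphism of either bracketing is $\bar\chi\circ\bar\psi\circ\phi$, and a short computation — using only that multiplier extensions compose, $\overline{\bar\chi\circ\psi}=\bar\chi\circ\bar\psi$ — shows that both bracketings produce the cocycle $s\mapsto\bar\chi\bar\psi(u_s)\,\bar\chi(v_s)\,t_s$; the unit laws follow at once from the composition formula together with $\bar\psi(1)=1$. I do not expect a real obstacle here: the verification is routine, the only places warranting a little care being the appeal to nondegeneracy of $\psi$ in order to pass to $M(B)\to M(C)$, the strict continuity of $s\mapsto\bar\psi(u_s)$ (which holds because $u$ is unit-ball valued and $\bar\psi$ is strictly continuous on bounded sets), and recognizing that the cocycle appearing in the composite is exactly the one covered by \lemref{cocycle multiply}.
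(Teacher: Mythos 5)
Your proof is correct and follows essentially the same route as the paper: the same composition formula $(\bar\psi\circ\phi,\,s\mapsto\bar\psi(u_s)v_s)$, the same observation that $\psi\circ u$ is an $\ad v\circ\gamma$-cocycle by equivariance of $\psi$, and the same appeal to \lemref{cocycle multiply} to conclude that the product is a $\gamma$-cocycle, followed by the same chain of equivariances. The paper leaves identities, associativity, and the unit laws as routine, which you spell out slightly more explicitly.
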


\begin{proof}
The crucial thing is to check that we can compose morphisms:
let $(\phi,u)\colon (A,\alpha)\to (B,\beta)$ and $(\psi,v)\colon (B,\beta)\to (C,\gamma)$ be morphisms in $\aco$.
Claim:
\[
\bigl(\psi\circ\phi,(\psi\circ u)v\bigr)\colon (A,\alpha)\to (C,\gamma)
\]
is a morphism.
We need to show that:
\begin{enumerate}
\item $(\psi\circ u)v$ is a $\gamma$-cocycle.

\item $\psi\circ\phi$ is $\alpha-\ad\bigl((\psi\circ u)v\bigr)\circ \gamma$ equivariant.
\end{enumerate}

For (i), 
note that 
$\psi\circ u$ is an $\ad v\circ\gamma$ cocycle since $\psi$ is $\beta-\ad v\circ \gamma$ equivariant,
and hence it follows from Lemma~3.5 that $(\psi\circ u)v$ is a $\gamma$-cocycle.

For (ii), we reason as follows:
$\psi$ is $\beta-\ad v\circ\gamma$ equivariant
and $u$ is a $\beta$-cocycle,
so
$\psi\circ u$
is an $(\ad v\circ\gamma)$-cocycle
and $\psi$ is
\[
\ad u\circ\beta-\ad( \psi\circ u)\circ\ad v\circ\gamma
\]
equivariant.
Note that
\[
\ad (\psi\circ u)\circ\ad v\circ\gamma=\ad \bigl((\psi\circ u)v\bigr)\circ\gamma.
\]
Since $\phi$ is $\alpha-\ad u\circ\beta$ equivariant,
the composition $\psi\circ\phi$ is
\[
\alpha-\ad \bigl((\psi\circ u)v\bigr)\circ\gamma
\]
equivariant.

This proves the claim, and so composition of morphisms is well-defined.

It is obvious that there are identity morphisms,
and a routine computation shows that composition is associative.
\end{proof}

The isomorphisms in the category are just outer conjugacies of actions (and hence the name).

For \thmref{wcpo} below we will need
the \emph{fixed-point equivariant category $\wcoo$ of coactions},
which has the same objects as $\wcon$,
and in which
a morphism 
$\psi\colon (C,\delta,V)\to (D,\epsilon,W)$
is
a morphism $\psi\colon C\to D$ in $\csn$
that is $\delta-\epsilon$ equivariant
and satisfies
\begin{equation}\label{morphism}
D^{\epsilon,W}=D^{\epsilon,\psi\circ V}.
\end{equation}
However, there is a subtlety:
it is not obvious to us how to give a direct proof that composition
of the above morphisms
is well-defined.
We will in fact give an indirect argument for this
below (see \thmref{wcpo}).
To outline our strategy,
it will help to keep the following goal in mind:
we want to establish an equivalence
between the outer category of actions and the fixed-point equivariant category of coactions.
Due to the above difficulty, we will begin with a functor into an auxiliary category,
and the properties of this functor will allow us to eventually prove that it gives an equivalence with a subcategory.

Here is the auxiliary category:
the \emph{semi-comma equivariant category $\wcod$ of coactions}
has the same objects as $\wcon$,
namely equivariant maximal coactions,
and a morphism $\psi\:(C,\delta,V)\to (D,\epsilon,W)$ in the category is just
a morphism $\psi\:C\to D$ in $\csn$
that is $\delta-\epsilon$ equivariant.
The reason for the name ``semi-comma'' is that the morphisms have nothing to do with $V$ and $W$.
Note that
once we have cleared up the issue with compositions,
our desired category $\wcoo$ will be a subcategory of $\wcod$
obtained by keeping all the objects but placing a restriction on the morphisms.

Also note that the fixed-point condition on morphisms in $\wcoo$ does not say that $\psi$ takes the generalized fixed-point algebra $C^{\delta,V}$ to $D^{\epsilon,W}$,
but rather the two equivariant homomorphisms $W,\psi\circ V\colon C^*(G)\to M(D)$ give the same generalized fixed-point algebra.
However, an isomorphism in the category will preserve the generalized fixed-point algebras.

Let $(A,\alpha)$ be an action of $G$.
For an $\alpha$-cocycle $u$,
write
\[
\Phi_u\colon \bigl(A\rtimes_{\ad u\circ\alpha} G,\what{\ad u\circ\alpha}\bigr)
\variso (A\rtimes_\alpha G,\what\alpha\bigr)
\]
for the full-crossed-product isomorphism
given by Pedersen's theorem (\propref{pedersen}).

Suppose $(\phi,u)\colon (A,\alpha)\to (B,\beta)$ in $\aco$.
Then $u$ is a $\beta$-cocycle,
giving an exterior-equivalent action $\gamma=\ad u\circ\beta$,
and $\phi\colon (A,\alpha)\to (B,\gamma)$ in $\acn$.
Taking crossed products gives a morphism
\[
\phi\rtimes G\colon (A\rtimes_\alpha G,\what\alpha,i_G^\alpha)\to (B\rtimes_\gamma G,\what\gamma,i_G^\gamma)
\]
in the nondegenerate equivariant category $\wcon$ of coactions.
Forgetting some structure,
\[
\phi\rtimes G\colon A\rtimes_\alpha G\to B\rtimes_\gamma G
\]
is an $\what\alpha-\what\gamma$ equivariant morphism in $\csn$.

On the other hand, Pedersen's theorem gives a
$\what\gamma-\what\beta$ equivariant
isomorphism
\[
\Phi_u=i_B^\beta\times (i_B^\beta\circ u)i_G^\beta\colon B\rtimes_\gamma G\variso B\rtimes_\beta G
\]
in $\csn$.
Then composition gives an
$\what\alpha-\what\beta$ equivariant
morphism
\[
\Phi_u\circ (\phi\rtimes G)\colon A\rtimes_\alpha G\to B\rtimes_\beta G
\]
in $\csn$.

\begin{prop}\label{dumb functor}
With the above notation,
the assignments
\begin{align}
(A,\alpha)&\mapsto (A\rtimes_\alpha G,\what\alpha,i_G^\alpha)
\label{obj map}
\\
(\phi,u)&\mapsto \Phi_u\circ (\phi\rtimes G)
\label{mor map}
\end{align}
give a functor
$\wcpd\colon \aco\to \wcod$
that is faithful and essentially surjective.
\end{prop}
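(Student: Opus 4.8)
The plan is to check, in order, that the assignments \eqref{obj map}--\eqref{mor map} preserve identities and composition (so that $\wcpd$ is indeed a functor), and then that this functor is faithful and essentially surjective. That the morphism map lands in $\wcod$ at all --- that is, that $\Phi_u\circ(\phi\rtimes G)$ is a $\what\alpha-\what\beta$ equivariant morphism of $\csn$ --- has already been recorded in the discussion immediately preceding the statement, so nothing more is needed on that point.

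Preservation of identities is quick: the identity of $(A,\alpha)$ in $\aco$ is $(\id_A,1)$ with $1$ the trivial $\alpha$-cocycle (see the proof of \lemref{outer well defined}), and since $\Phi_1=\id$ and $\id_A\rtimes G=\id$ this is carried to the identity of $(A\rtimes_\alpha G,\what\alpha,i_G^\alpha)$. For composition, take morphisms $(\phi,u)\colon(A,\alpha)\to(B,\beta)$ and $(\psi,v)\colon(B,\beta)\to(C,\gamma)$ of $\aco$; by \lemref{outer well defined} their composite is $\bigl(\psi\circ\phi,(\psi\circ u)v\bigr)$, and $\psi\circ u$ is an $\ad v\circ\gamma$-cocycle because $\psi$ is $\beta-\ad v\circ\gamma$ equivariant. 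Applying \lemref{cocycle multiply} to the action $\gamma$ with the cocycles $v$ and $\psi\circ u$ gives $\Phi_{(\psi\circ u)v}=\Phi_v\circ\Phi_{\psi\circ u}$, so --- as $\Phi_v$ is an isomorphism --- the identity $\wcpd\bigl((\psi,v)\circ(\phi,u)\bigr)=\wcpd(\psi,v)\circ\wcpd(\phi,u)$ reduces to
\[
\Phi_{\psi\circ u}\circ\bigl((\psi\circ\phi)\rtimes G\bigr)=(\psi\rtimes G)\circ\Phi_u\circ(\phi\rtimes G),
\]
both sides being morphisms $A\rtimes_\alpha G\to C\rtimes_{\ad v\circ\gamma}G$ in $\csn$ (here one uses $\ad\bigl((\psi\circ u)v\bigr)\circ\gamma=\ad(\psi\circ u)\circ(\ad v\circ\gamma)$). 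I would verify this last identity by evaluating on the canonical generators: on $i_A^\alpha(a)$ both sides equal $i_C^{\ad v\circ\gamma}(\psi\circ\phi(a))$, and on $i_G^\alpha(s)$ both sides equal $i_C^{\ad v\circ\gamma}(\psi(u_s))\,i_G^{\ad v\circ\gamma}(s)$, using the defining relations of the Pedersen isomorphisms (\propref{pedersen}, in particular \eqref{Phi}) together with the standard formulas for $\phi\rtimes G$ and $\psi\rtimes G$ on generators. The one genuinely delicate part of the whole argument is precisely this bookkeeping --- keeping track, for each symbol, of which crossed product and which of the several exterior-equivalent actions is meant --- and \lemref{cocycle multiply} is exactly the tool that makes it manageable.

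Faithfulness then follows from the same generator computations: writing $\Psi=\Phi_u\circ(\phi\rtimes G)$, one has $\Psi\circ i_A^\alpha=i_B^\beta\circ\phi$ and $\Psi\circ i_G^\alpha(s)=i_B^\beta(u_s)\,i_G^\beta(s)$, so both $\phi$ and the cocycle $u$ are recovered from $\Psi$ because $i_B^\beta$ is injective. Finally, for essential surjectivity let $(D,\epsilon,W)$ be an object of $\wcod$, so that $\epsilon$ is a maximal coaction of $G$ on $D$ and $W\colon C^*(G)\to M(D)$ is a nondegenerate $\delta_G-\epsilon$ equivariant homomorphism. Classical Landstad duality for full crossed products (\thmref{lda}) then furnishes an action $(A,\alpha)$ together with an $\what\alpha-\epsilon$ equivariant $C^*$-isomorphism $\theta\colon A\rtimes_\alpha G\to D$ (in fact with $\theta\circ i_G^\alpha=W$, although that is not needed here). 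Since morphisms of $\wcod$ take no account of the distinguished homomorphisms, $\theta$ is an isomorphism $(A\rtimes_\alpha G,\what\alpha,i_G^\alpha)\variso(D,\epsilon,W)$ in $\wcod$, i.e.\ $\wcpd(A,\alpha)\simeq(D,\epsilon,W)$. This completes the plan.
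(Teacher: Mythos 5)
Your proof is correct and follows essentially the same route as the paper's: functoriality is checked on the generators $i_A$ and $i_G$ using the defining relations of the Pedersen isomorphisms, faithfulness is read off from those same relations, and essential surjectivity comes from classical Landstad duality. The only (harmless) difference is organizational: you factor $\Phi_{(\psi\circ u)v}=\Phi_v\circ\Phi_{\psi\circ u}$ via \lemref{cocycle multiply} and cancel the isomorphism $\Phi_v$, whereas the paper verifies the composed-morphism identity in one chain of generator computations that uses only the cocycle half of that lemma.
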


\begin{proof}
It follows immediately from the definitions that the object and morphism maps \eqref{obj map}--\eqref{mor map}
are well-defined and
that \eqref{mor map}
preserves identity morphisms.
To check that $\wcpd$ preserves compositions,
given morphisms
\[
\xymatrix{
(A,\alpha) \ar[r]^-{(\phi,u)} &(B,\beta) \ar[r]^-{(\psi,v)} &(C,\gamma)
}
\]
in $\aco$,
we have
\begin{align*}
&\wcpd(\psi,v)\circ \wcpd(\phi,u)\circ i_A
\\&\quad=\bigl(\Phi_v\circ (\psi\rtimes G)\bigr)
\circ
\bigl(\Phi_u\circ (\phi\rtimes G)\bigr)
\circ
i_A
\\&\quad=\Phi_v\circ (\psi\rtimes G)\circ \Phi_u\circ i_B^{\ad u\circ\beta}\circ \phi
\\&\quad=\Phi_v\circ (\psi\rtimes G)\circ i_B^\beta\circ \phi
\\&\quad=\Phi_v\circ i_C^{\ad v\circ\gamma}\circ \psi\circ \phi
\\&\quad=i_C^\gamma\circ \psi\circ \phi
\\&\quad=\Phi_{(\psi\circ u)v}\circ i_C^{\ad (\psi\circ u)v}\circ \psi\circ\phi
\\&\quad=\Phi_{(\psi\circ u)v}\circ \bigl((\psi\circ\phi)\rtimes G\bigr)\circ i_A
\\&\quad=\wcpd\bigl(\psi\circ\phi,(\psi\circ u)v\bigr)\circ i_A
\\&\quad=\wcpd\bigl((\psi,v)\circ (\phi,u)\bigr)\circ i_A,
\end{align*}
and for $s\in G$
\begin{align*}
&\wcpd(\psi,v)\circ \wcpd(\phi,u)\circ i_G^\alpha(s)
\\&\quad=\bigl(\Phi_v\circ (\psi\rtimes G)\bigr)\circ\bigl(\Phi_u\circ (\phi\rtimes G)\bigr)\circ i_G^\alpha(s)
\\&\quad=\Phi_v\circ (\psi\rtimes G)\circ \Phi_u\circ i_G^{\ad u\circ\beta}(s)
\\&\quad=\Phi_v\circ (\psi\rtimes G)\bigl(i_B^\beta(u_s)i_G^\beta(s)\bigr)
\\&\quad=\Phi_v\bigl(i_C^{\ad v\circ\gamma}\circ\psi(u_s)i_G^{\ad v\circ\gamma}(s)\bigr)
\\&\quad=i_C^\gamma\circ\psi(u_s)i_C^\gamma(v_s)i_G^\gamma(s)
\\&\quad=i_C^\gamma\bigl((\psi\circ u)_sv_s\bigr)i_G^\gamma(s)
\\&\quad=i_C^\gamma\Bigl(\bigl((\psi\circ v)u\bigr)_s\Bigr)i_G^\gamma(s)
\\&\quad=\Phi_{(\psi\circ u)v}\circ i_G^{\ad ((\psi\circ u)v\)\circ\gamma}(s)
\\&\quad=\Phi_{(\psi\circ u)v}\circ \bigl((\psi\circ\phi)\rtimes G\bigr)\circ i_G^\alpha(s)
\\&\quad=\wcpd\bigl(\psi\circ\phi,(\psi\circ u)v\bigr)\circ i_G^\alpha(s)
\\&\quad=\wcpd\bigl((\psi,v)\circ (\phi,u)\bigr)\circ i_G^\alpha(s).
\end{align*}
Thus $\wcpd\colon \aco\to \wcod$ is a functor.

It is clear that $\wcpd$ is essentially surjective, because
it is essentially surjective for the nondegenerate categories, which have the same objects,
and isomorphism in $\wcon$ is stronger than in $\wcod$.

To see that $\wcpd$ is faithful,
suppose that we have morphisms
\[
(\phi,u),(\rho,v)\colon (A,\alpha)\to (B,\beta)
\]
in $\aco$ such that
\[
\wcpd(\phi,u)=\wcpd(\rho,v)\colon 
(A\rtimes_\alpha G,\what\alpha,i_G^\alpha)\to
(B\rtimes_\beta G,\what\beta,i_G^\beta)
\]
in $\wcod$.
By construction, we have
\begin{align*}
\wcpd(\phi,u)\circ i_A^\alpha
&=\Phi_u\circ (\phi\rtimes G)\circ i_A^\alpha
\\&=\Phi_u\circ i_B^{\ad u\circ\beta}\circ \phi
\\&=i_B^\beta\circ \phi,
\end{align*}
and similarly
\[
\wcpd(\rho,v)\circ i_A^\alpha=i_B^\beta\circ \rho,
\]
so $\phi=\rho$ since $i_B^\beta$ is injective.

On the other hand, for $s\in G$ we have
\begin{align*}
\wcpd(\phi,u)\circ i_G^\alpha(s)
&=\Phi_u\circ (\phi\rtimes G)\circ i_G^\alpha(s)
\\&=\Phi_u\circ i_B^{\ad u\circ\beta}(s)
\\&=i_B^\beta(u_s)i_G^\beta(s),
\end{align*}
and similarly
\[
\wcpd(\rho,v)\circ i_G^\alpha(s)=i_B^\beta(v_s)i_G^\beta(s),
\]
so $u_s=v_s$ since $i_G^\beta(s)$ is unitary and $i_B^\beta$ is injective.
Thus $(\phi,u)=(\rho,v)$.
\end{proof}

With the above functor $\wcpd$ in hand, we can achieve our goal:

\begin{thm}\label{wcpo}
With the above notation,
the category $\wcoo$ is a well-defined subcategory of $\wcod$,
and the assignments \eqref{obj map}--\eqref{mor map} give
a category equivalence
$\wcpo\colon \aco\to \wcoo$.
\end{thm}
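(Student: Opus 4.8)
The strategy is to work through the auxiliary functor $\wcpd\colon\aco\to\wcod$ of \propref{dumb functor}, which is already known to be faithful and essentially surjective, and to show that its image lands precisely among the $\wcod$-morphisms satisfying \eqref{morphism}. I would organize the argument into four steps: (i) every morphism $\wcpd(\phi,u)$ satisfies \eqref{morphism}; (ii) for any two actions $(A,\alpha),(B,\beta)$, every morphism of $\wcod$ from $(A\rtimes_\alpha G,\what\alpha,i_G^\alpha)$ to $(B\rtimes_\beta G,\what\beta,i_G^\beta)$ satisfying \eqref{morphism} lies in the image of $\wcpd$; (iii) $\wcoo$ is closed under composition, hence is a subcategory of $\wcod$; and (iv) $\wcpo\colon\aco\to\wcoo$ is essentially surjective (from \propref{dumb functor}, whose essential surjectivity is witnessed by isomorphisms of $\wcon$, which lie in $\wcoo$), full (by (ii)), and faithful (by \propref{dumb functor}), hence a category equivalence. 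Throughout I will use the elementary observation that pre- or post-composition of a $\wcod$-morphism satisfying \eqref{morphism} with an isomorphism of $\wcon$ again satisfies \eqref{morphism}; this holds because an equivariant $C^*$-isomorphism $\psi\colon(C,\delta,V)\to(D,\epsilon,W)$ carries $C^{\delta,V}$ onto $D^{\epsilon,\psi\circ V}$, as one sees by checking Landstad's conditions \eqref{fix}--\eqref{continuous} directly.

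For (i): by the computation in the proof of \propref{dumb functor} one has $\wcpd(\phi,u)\circ i_G^\alpha(s)=i_B^\beta(u_s)i_G^\beta(s)=:W_s$, so \eqref{morphism} for $\wcpd(\phi,u)$ asserts $(B\rtimes_\beta G)^{\what\beta,i_G^\beta}=(B\rtimes_\beta G)^{\what\beta,W}$. Now $\Phi_u\colon B\rtimes_{\ad u\circ\beta}G\to B\rtimes_\beta G$ is an isomorphism of $\wcon$ from $(B\rtimes_{\ad u\circ\beta}G,\what{\ad u\circ\beta},i_G^{\ad u\circ\beta})$ to $(B\rtimes_\beta G,\what\beta,W)$ --- it is $\what{\ad u\circ\beta}$--$\what\beta$ equivariant by \thmref{pedersen} and takes $i_G^{\ad u\circ\beta}$ to $W$ --- so it carries generalized fixed-point algebras onto one another; since $\Phi_u\circ i_B^{\ad u\circ\beta}=i_B^\beta$, it carries $i_B^{\ad u\circ\beta}(B)$ onto $i_B^\beta(B)$, whence $(B\rtimes_\beta G)^{\what\beta,W}=i_B^\beta(B)=(B\rtimes_\beta G)^{\what\beta,i_G^\beta}$.

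The heart of the matter --- and what I expect to be the main obstacle --- is (ii). Given $\psi$ as there, set $W:=\psi\circ i_G^\alpha$, a $\delta_G$--$\what\beta$ equivariant nondegenerate homomorphism, so that $\psi\colon(A\rtimes_\alpha G,\what\alpha,i_G^\alpha)\to(B\rtimes_\beta G,\what\beta,W)$ is a morphism of $\wcon$. Condition \eqref{morphism} says exactly that $(B\rtimes_\beta G)^{\what\beta,W}=(B\rtimes_\beta G)^{\what\beta,i_G^\beta}=i_B^\beta(B)$, and this is what allows the nondegenerate fixed-point functor applied to $\psi$ to produce, after the canonical identifications $A\cong i_A^\alpha(A)$ and $B\cong i_B^\beta(B)$, a nondegenerate homomorphism $\phi\colon A\to M(B)$ rather than a homomorphism into some other subalgebra. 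Separately, comparing the two classical-Landstad isomorphisms of the crossed products of $i_B^\beta(B)$ by the actions $\alpha^{i_G^\beta}$ and $\alpha^W$ onto $B\rtimes_\beta G$ and feeding the resulting isomorphism $i_B^\beta(B)\rtimes_{\alpha^W}G\variso i_B^\beta(B)\rtimes_{\alpha^{i_G^\beta}}G$ into \thmref{pedersen} gives a $\beta$-cocycle $u$ with $\alpha^W$ corresponding to $\ad u\circ\beta$ under $i_B^\beta$ and $W_s=i_B^\beta(u_s)i_G^\beta(s)$; equivariance of $\fin(\psi)$ then makes $\phi$ an $\alpha$--$(\ad u\circ\beta)$ equivariant map, so $(\phi,u)\colon(A,\alpha)\to(B,\beta)$ is a morphism of $\aco$. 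It remains to verify $\wcpd(\phi,u)=\psi$: both are morphisms of $\wcon$ from $(A\rtimes_\alpha G,\what\alpha,i_G^\alpha)$ to $(B\rtimes_\beta G,\what\beta,W)$, and applying the (faithful) fixed-point functor $\fin$ to each yields $\phi$ --- for $\wcpd(\phi,u)=\Phi_u\circ(\phi\rtimes G)$ one uses that $\fin(\Phi_u)$ is the identity under the identifications and $\fin(\phi\rtimes G)=\phi$ --- so $\wcpd(\phi,u)=\psi$. The delicate points here are keeping the several canonical identifications consistent and recognizing that \eqref{morphism} is precisely the hypothesis that forces the target fixed-point subalgebra to be $i_B^\beta(B)$.

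Finally, for (iii), given morphisms $\psi_1\colon X\to Y$ and $\psi_2\colon Y\to Z$ of $\wcoo$, choose isomorphisms $f_X\colon X\to X'$, $f_Y\colon Y\to Y'$, $f_Z\colon Z\to Z'$ of $\wcon$ with $X',Y',Z'$ in the object-image of $\wcpd$ (possible by essential surjectivity). By the elementary observation above, $f_Y\circ\psi_1\circ f_X^{-1}$ and $f_Z\circ\psi_2\circ f_Y^{-1}$ still satisfy \eqref{morphism}, so by (ii) they equal $\wcpd(\phi_1,u_1)$ and $\wcpd(\phi_2,u_2)$ for composable morphisms of $\aco$; hence $f_Z\circ(\psi_2\circ\psi_1)\circ f_X^{-1}=\wcpd\bigl((\phi_2,u_2)\circ(\phi_1,u_1)\bigr)$ satisfies \eqref{morphism} by (i), and the observation once more gives that $\psi_2\circ\psi_1$ satisfies \eqref{morphism}. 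Since identities visibly satisfy \eqref{morphism} and associativity is inherited from $\wcod$, $\wcoo$ is a subcategory of $\wcod$; combining (i)--(iv) then shows that $\wcpo\colon\aco\to\wcoo$ is a category equivalence.
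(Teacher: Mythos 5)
Your proposal is correct and follows essentially the same route as the paper: establish that $\wcpd$ restricts to a bijection between $\aco$-morphisms and the $\wcod$-morphisms satisfying \eqref{morphism} (using Pedersen's theorem together with classical and nondegenerate Landstad duality for the surjectivity), then deduce closure of \eqref{morphism} under composition by transporting along $\wcon$-isomorphisms, and conclude the equivalence. The only difference is cosmetic: in step (ii) you extract the cocycle by comparing two Landstad isomorphisms on $i_B^\beta(B)$ and recover $\phi$ via the fixed-point functor, whereas the paper invokes \corref{LD} to produce an action $\gamma$ on $B$ and then applies Pedersen to the resulting $\Theta$ --- the same ingredients in a slightly different order.
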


\begin{proof}
We will first show that 
for objects $(A,\alpha)$ and $(B,\beta)$ in $\aco$,
the functor $\wcpd$
gives a bijection 
\begin{align}\label{bij}
&\mor_{\aco}\bigl((A,\alpha),(B,\beta)\bigr)
\longleftrightarrow
\\
\notag
&\hspace{.5in}\bigl\{\psi\in \mor_{\wcod}\bigl(\wcpd(A,\alpha),\wcpd(B,\beta)\bigr):
\\
\notag
&\hspace{3in}\text{$\psi$ satisfies \eqref{morphism}}\bigr\}.
\end{align}

Given a morphism $(\varphi,u)\:(A,\alpha)\to (B,\beta)$ in $\aco$,
let $\gamma=\ad u\circ\beta$,
so that $\varphi\:(A,\alpha)\to (B,\gamma)$ is a morphism in $\acn$.
Since
$(\phi\rtimes G)\circ i_G^\alpha
=i_G^\gamma$
and $\Phi_u$ is a $\what\gamma-\what\beta$ equivariant isomorphism, we have
\begin{align*}
(B\rtimes_\beta G)^{\what\beta,i_G^\beta}
&=i_B^\beta(B)
\\&=\Phi_u(i_B^\gamma(B))
\\&=\Phi_u\bigl((B\rtimes_\gamma G)^{\what\gamma,i_G^\gamma}\bigr)
\\&=(B\rtimes_\gamma G)^{\what\beta,\Phi_u\circ i_G^\gamma}
\\&=(B\rtimes_\gamma G)^{\what\beta,\Phi_u\circ (\phi\rtimes G)\circ i_G^\alpha}
\\&=(B\rtimes_\gamma G)^{\what\beta,\wcpo(\phi,u)\circ i_G^\alpha},
\end{align*}
and hence $\wcpd(\varphi,u)$ satisfies \eqref{morphism}.

Now
suppose we are given a morphism
\[
\psi\colon (A\rtimes_\alpha G,\what\alpha,i_G^\alpha)\to (B\rtimes_\beta G,\what\beta,i_G^\beta)
\]
in $\wcod$
that satisfies \eqref{morphism}.
Put
\[
V=\psi\circ i_G^\alpha\colon G\to M(B\rtimes_\beta G).
\]
Since $\psi$ satisfies \eqref{morphism}, we have
\[
i_B^\beta(B)
=(B\rtimes_\beta G)^{\what\beta,i_G^\beta}
=(B\rtimes_\beta G)^{\what\beta,V}.
\]
Since $i_B^\beta\colon B\to i_B^\beta(B)$ is an isomorphism
and
$(B\rtimes_\beta G,\what\beta,V)$
is
an equivariant maximal coaction,
by classical Landstad duality (\corref{LD})
there are an action $\gamma$ of $G$ on $B$ and an isomorphism
\[
\Theta\colon (B\rtimes_\gamma G,\what\gamma,i_G^\gamma)\variso (B\rtimes_\beta G,\what\beta,V)
\]
in $\wcon$
such that
\begin{equation}\label{i}
\Theta\circ i_B^\gamma=i_B^\beta.
\end{equation}

Forgetting some structure, we have a $\what\gamma-\what\beta$ equivariant morphism
$\Theta\colon B\rtimes_\gamma G\to B\rtimes_\beta G$
satisfying
\eqref{i}, so by Pedersen's theorem 
there is a unique $\beta$-cocycle $u$
such that
$\gamma=\ad u\circ\beta$ and
$\Theta=\Phi_u$.

On the other hand, we can regard
\[
\psi\colon (A\rtimes_\alpha G,\what\alpha,i_G)\to (B\rtimes_\beta G,\what\beta,V)
\]
as a morphism
in $\wcon$,
and thus the composition
\[
\Theta\inv\circ \psi\colon (A\rtimes_\alpha G,\what\alpha,i_G)\to (B\rtimes_\gamma G,\what\gamma,i_G)
\]
is also a morphism
in $\wcon$.
By nondegenerate Landstad duality for actions discussed above \cite[Theorem~4.1]{clda},
there exists a unique morphism
\[
\phi\colon (A,\alpha)\to (B,\gamma)
\]
in $\acn$
such that $\Theta\inv\circ \psi=\phi\rtimes G$,
and then by construction of the functor $\wcpd$ we have
\begin{multline*}
\psi=\Theta\circ (\phi\rtimes G)=\Phi_u\circ (\phi\rtimes G)
=\wcpd(\phi,u)\colon 
\\
(A\rtimes_\alpha G,\what\alpha,i_G^\alpha)\to (B\rtimes_\beta G,\what\beta,i_G^\beta)
\end{multline*}
in $\wcod$.
Thus we have established the desired bijection \eqref{bij}.

Now let
\begin{align*}
\psi&\:(D,\delta,V)\to (E,\epsilon,W)
\\
\rho&\:(E,\epsilon,W)\to (F,\zeta,U)
\end{align*}
be morphisms in $\wcod$ satisfying \eqref{morphism}.
We must show that the composition $\rho\circ\psi$ also satisfies \eqref{morphism}.
By nondegenerate Landstad duality we have
actions
$(A,\alpha)$, $(B,\beta)$, and $(C,\gamma)$ such that
\begin{align*}
(A\rtimes_\alpha G,\what\alpha,i_G^\alpha)&\simeq (D,\delta,V)\\
(B\rtimes_\beta G,\what\beta,i_G^\beta)&\simeq (E,\epsilon,W)\\
(C\rtimes_\gamma G,\what\gamma,i_G^\gamma)&\simeq (F,\zeta,U),
\end{align*}
where the isomorphisms take place in the nondegenerate equivariant category $\wcon$,
i.e., there are
$C^*$-isomorphisms
\begin{align*}
\sigma&\:A\rtimes_\alpha G\variso D\\
\tau&\:B\rtimes_\beta G\variso E\\
\omega&\:C\rtimes_\gamma G\variso F
\end{align*}
that are equivariant for the dual coactions and the given coactions $\delta$, $\epsilon$, and $\zeta$, respectively,
and that also satisfy
\[
\sigma\circ i_G^\alpha=V,\quad
\tau\circ i_G^\beta=W,\midtext{and}
\omega\circ i_G^\gamma=U.
\]
Under these isomorphisms,
the homomorphisms $\psi$ and $\rho$
are transferred to 
\begin{align*}
\psi'&=\tau\inv\circ\psi\circ\sigma\:
(A\rtimes_\alpha G,\what\alpha,i_G^\alpha)
\to
(B\rtimes_\beta G,\what\beta,i_G^\beta)
\\
\rho'&=\omega\inv\circ\rho\circ\tau\:
(B\rtimes_\beta G,\what\beta,i_G^\beta)
\to
(C\rtimes_\gamma G,\what\gamma,i_G^\gamma)
\end{align*}
in $\wcod$,
and because the isomorphisms $\sigma$, $\tau$, and $\omega$ preserve all structure
we see that $\psi'$ and $\rho'$ will satisfy \eqref{morphism}.

Since $\wcpd$ is bijective between morphism sets in $\aco$ and sets of morphism sets in $\wcod$ determined by the condition \eqref{morphism},
there are unique morphisms
\begin{align*}
(\varphi,u)&\:(A,\alpha)\to (B,\beta)\\
(\sigma,v)&\:(B,\beta)\to (C,\gamma)
\end{align*}
in $\aco$ such that
\begin{align*}
\psi'&=\wcpd(\varphi,u)\\
\rho'&=\wcpd(\sigma,v).
\end{align*}
Since $\wcpd$ is functorial we have
\[
\rho'\circ\psi'=\wcpd\bigl((\sigma,v)\circ (\varphi,u)\bigr).
\]
Thus $\rho'\circ\psi'$ satisfies \eqref{morphism}.
Since the isomorphisms $\sigma$ and $\omega$ preserve all structure,
the morphism
\[
\rho\circ\psi=\sigma\inv\circ\rho'\circ\psi'\circ\omega
\]
also satisfies \eqref{morphism}.

Thus we have proved the first statement of the theorem,
establishing the existence of the subcategory $\wcoo$ of $\wcod$ consisting of the same objects but only those morphisms satisfying \eqref{morphism}.
In view of the bijections \eqref{bij},
it now follows that $\wcpd$ gives a full and faithful functor
\[
\wcpo\:\aco\to \wcoo,
\]
which is essentially surjective since $\wcpd$ is,
and therefore is a category equivalence.
\end{proof}

\begin{rem}
Regarding the bijection \eqref{bij},
of course the injectivity also follows from fidelity of the functor $\wcpd$ in \propref{dumb functor}.
On the other hand, the proof of surjectivity is the only place in the entire paper that the full strength of Pedersen's theorem is needed.
Thus, Pedersen's theorem is what guarantees that the functor $\wcpo\:\aco\to \wcoo$ is full.
\end{rem}

We define a forgetful functor $F\colon \wcoo\to \csn$
on objects just as for $F\colon \wcon\to \csn$,
and on morphisms by taking
\[
\psi\colon (C,\delta,V)\to (D,\epsilon,W)
\]
to the same map viewed as a morphism $C\to D$ in $\csn$.
We define the \emph{outer crossed-product functor} as the composition
\[
\cpo:=F\circ \wcpo\colon \aco\to \csn.
\]
This setting describes an inversion of $\cpo$ in the sense of \defnref{invert def},
which we call \emph{outer Landstad duality for actions}.

\begin{prop}\label{good ou}
The above outer Landstad duality for actions is a good inversion.
\end{prop}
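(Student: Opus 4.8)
The plan is to follow the template of the proof of \propref{good nd}. Two things must be verified: that the image of the forgetful functor $F\colon\wcoo\to\csn$ is contained in the essential image of $\cpo$, and that $F$ has the unique isomorphism lifting property of \defnref{invert def}.

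For the first condition, the point is that $\cpo$ and the nondegenerate crossed-product functor $\cpn$ have the same object map, so they have the same essential image in $\csn$. Hence, exactly as in \propref{good nd}, if $C=F(C,\delta,V)$ for an object $(C,\delta,V)$ of $\wcoo$ --- which is simply an equivariant maximal coaction --- then classical Landstad duality (\corref{LD}) produces an action $(A,\alpha)$ with $C\simeq A\rtimes_\alpha G=\cpo(A,\alpha)$, so $C$ lies in the essential image of $\cpo$. This step is routine.

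For the unique isomorphism lifting property, given an object $(C,\delta,V)$ of $\wcoo$ and an isomorphism $\theta\colon C\variso D$ in $\csn$, I would transport the structure along $\theta$, setting $\epsilon=(\theta\otimes\id)\circ\delta\circ\theta\inv$ and $W=\theta\circ V$; then $\epsilon$ is again a maximal coaction, $W$ is a $\delta_G-\epsilon$ equivariant nondegenerate homomorphism, so $(D,\epsilon,W)$ is an equivariant maximal coaction and $\theta$ becomes a $\delta-\epsilon$ equivariant isomorphism with $W=\theta\circ V$. The only ingredient peculiar to the outer setting that needs checking is that $\theta$, so regarded, is a morphism --- indeed an isomorphism --- in $\wcoo$ rather than merely in $\wcod$; but the fixed-point condition \eqref{morphism} reads $D^{\epsilon,W}=D^{\epsilon,\theta\circ V}$, which holds trivially since $W=\theta\circ V$ by construction, and the same observation applies to $\theta\inv$. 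Uniqueness of the lift then follows because the codomain of any lift $(D',\epsilon',W')$ is forced (the underlying algebra is $D$, the equivariance pins down $\epsilon'=\epsilon$, and the morphism condition forces $W'=\theta\circ V=W$), and $F$ is faithful since the inversion has already been set up.

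I do not expect a genuine obstacle here: the argument is essentially the same as for nondegenerate Landstad duality, the only new point being the (trivial) verification that the transported data $(D,\epsilon,W)$ together with $\theta$ and $\theta\inv$ satisfy the defining condition \eqref{morphism} of $\wcoo$. If anything, the mild subtlety worth stating explicitly is that the condition \eqref{morphism} does not require $\theta$ to carry $C^{\delta,V}$ onto $D^{\epsilon,W}$, but here it does anyway because $W$ is literally $\theta\circ V$.
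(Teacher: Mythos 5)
Your argument is structurally the same as the paper's: the image condition is handled by classical Landstad duality exactly as in \propref{good nd} (the paper does not even bother to repeat it), and the lift of $\theta$ is produced by transporting $(\delta,V)$ along $\theta$ to an isomorphism in $\wcon$, which is then automatically an isomorphism in $\wcoo$ because \eqref{morphism} holds trivially when $W=\theta\circ V$ --- this is precisely the paper's proof, which simply cites \propref{good nd} for the transported structure instead of writing out $\epsilon$ and $W$. The one clause where you go beyond the paper is also the one clause that is wrong as stated: in the uniqueness step you claim that ``the morphism condition forces $W'=\theta\circ V=W$,'' but the morphism condition of $\wcoo$ is \eqref{morphism}, namely $D^{\epsilon,W'}=D^{\epsilon,\theta\circ V}$, which is an equality of generalized fixed-point algebras and does not determine $W'$ itself; the condition you are invoking ($W'=\psi\circ V$) is the morphism condition of $\wcon$, not of $\wcoo$. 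The paper instead dismisses uniqueness with the single remark that $F$ is faithful, which only gives uniqueness of the lifting \emph{morphism} once its codomain is fixed; neither your argument nor the paper's addresses the possibility of two lifts with the same underlying map but distinct codomains $(D,\epsilon,W)$ and $(D,\epsilon,W')$ sharing a fixed-point algebra. So your write-up is at the same level of rigor as the paper's, but you should delete or correct the clause asserting that \eqref{morphism} pins down $W'$.
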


\begin{proof}
We must check the unique isomorphism lifting property:
given an object $(C,\delta,V)$ of $\wcoo$ and an isomorphism $\theta\colon C\variso D$ in $\csn$,
since nondegenerate Landstad duality for actions is a good inversion by \propref{good nd},
we have extra structure $(\epsilon,W)$ for $D$ such that
$\theta$ gives an isomorphism
\[
\wilde\theta\colon (C,\delta,V)\variso (D,\epsilon,W)
\]
in $\wcon$, and hence an isomorphism in $\wcoo$,
covering $\theta$,
and since the forgetful functor is faithful
we see that $\wilde\theta$ is unique.
\end{proof}

\section{Inverting the crossed-product process --- coactions}\label{invert coactions}

The exposition we give below will parallel what we did in the preceding section for actions, especially for nondegenerate and enchilada dualities.
However, subsequent to \propref{copedersen}, we remarked that we do not know if the converse of Pedersen's theorem holds for coactions. Consequently, although we have complete versions of nondegenerate and enchilada dualities for coactions, we do not have an outer duality.

As before, in all three cases the objects of the categories $\CC,\DD,\wilde \DD$ will remain the same:
we start with a category $\CC$ of normal coactions,
the basic process will produce the crossed-product $C^*$-algebra,
and the objects in the  category $\wilde \DD$ will be equivariant actions.
When the development is exactly parallel to that in the preceding section, modulo a completely routine switching of ``action'' and ``normal coaction'', together with routine adjustments in the notation, we will merely mention the analogous results. However, computations involving coactions are frequently of a different character than those for actions, and in all appropriate cases we will include these computations.

\subsection{Nondegenerate Landstad duality for coactions}

The \emph{nondegenerate category $\con$ of coactions}
has
normal coactions $(A,\delta)$ of $G$ as objects,
and when we say
$\phi\colon (A,\delta)\to (B,\epsilon)$ is a morphism in the category we mean
$\phi\colon A\to B$ is a morphism in $\csn$
that is $\delta-\epsilon$ equivariant.
Isomorphisms in the category are equivariant $C^*$-isomorphisms.

The \emph{nondegenerate equivariant category $\wacn$ of actions}
has
equivariant actions (see \defnref{fix action}) $(C,\alpha,\mu)$ of $G$
as objects,
and when we say
$\psi\colon (C,\alpha,\mu)\to (D,\beta,\nu)$ is a morphism in the category we mean
$\psi\colon C\to D$ is a morphism in $\csn$
that is $\alpha-\beta$ equivariant
and satisfies
\[
\nu=\psi\circ \mu.
\]

The 
\emph{nondegenerate crossed-product functor $\cpn$} is given
on objects by
$(A,\delta)\mapsto A\rtimes_\delta G$,
and
on morphisms by
\[
\bigl(\phi\colon (A,\delta)\to (B,\epsilon)\bigr)
\mapsto
\bigl(\phi\rtimes G\colon A\rtimes_\delta G\to B\rtimes_\epsilon G\bigr).
\]

The \emph{nondegenerate equivariant crossed-product functor $\wcpn$} is given
on objects by
$(A,\delta)\mapsto (A\rtimes_\delta G,\what\delta,j_G)$,
and
on morphisms by
\begin{multline*}
\bigl(\phi\colon (A,\delta)\to (B,\epsilon)\bigr)
\mapsto
\\
\bigl(\phi\rtimes G\colon 
(A\rtimes_\delta G,\what\delta,j_G)\to (B\rtimes_\epsilon G,\what\epsilon,j_G\bigr).
\end{multline*}
The functor $\wcpn$ is an equivalence \cite[Theorem~4.2 and Corollary~4.3]{cldx} and
$\cpn$ is the composition of $\wcpn$ followed by the forgetful functor $F\colon\wacn\to\csn$
defined on objects by $(C,\alpha,\mu)\mapsto C$ and on morphisms by $f\mapsto f$.
Hence, $F$ is precisely the type of forgetful functor that fits into the framework of \secref{abstract},
and hence, this setup gives an inversion of the process $\cpn$.
We call this inversion \emph{nondegenerate Landstad duality for coactions}.

By \cite[Theorem~4.2 and Corollary~4.3]{cldx} a quasi-inverse of the nondegenerate equivariant crossed-product functor
is given by the \emph{nondegenerate fixed-point functor $\fin$}, given
on objects by
$(C,\alpha,\mu)\mapsto (C^{\alpha,\mu},\delta^\mu)$
(see \defnref{fix action} for the notation),
and
on morphisms 
as follows:
if $\psi\colon (C,\alpha,\mu)\to (D,\beta,\nu)$ is a morphism in $\wacn$,
then
\[
\fin(\psi)\colon (C^{\alpha,\mu},\delta^\mu)\to (D^{\beta,\nu},\delta^\nu)
\]
is the unique morphism in $\con$
such that
the diagram
\[
\xymatrix@C+30pt{
(C^{\alpha,\mu}\rtimes_{\delta^\mu} G,\what{\delta^\mu},j_{C^{\alpha,\mu}})
\ar[r]^-{\fin(\psi)\rtimes G} \ar[d]^\simeq
&(D^{\beta,\nu}\rtimes_{\delta^\nu} G,\what{\delta^\nu},j_{D^{\beta,\nu}})
\ar[d]_\simeq
\\
(C,\alpha,\mu) \ar[r]_-\psi
&(D,\beta,\nu)
}
\]
commutes in $\wacn$,
where the vertical arrows are the canonical isomorphisms.

Since we have chosen the object map of $\fin$ to take an equivariant action $(C,\alpha,\mu)$ to the $C^*$-subalgebra $C^{\alpha,\mu}$ of $M(C)$,
in our setting
the nondegenerate 
homomorphism
\[
\fin(\psi)\colon C^{\alpha,\mu}\to M(D^{\beta,\nu})
\]
is the restriction of (the canonical extension to $M(C)$ of) $\psi$.

Thus,
the additional data required to recover the coaction from the crossed product $A\rtimes_\delta G$ consists of the dual action $\what\delta$ and the canonical homomorphism $j_G$.

\begin{prop}\label{good cond}
The above nondegenerate Landstad duality is a good inversion.
\end{prop}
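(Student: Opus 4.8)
The plan is to verify directly the two requirements in \defnref{invert def}: that the image of the forgetful functor $F\colon\wacn\to\csn$ is contained in the essential image of $\cpn$, and that $F$ enjoys the unique isomorphism lifting property. The whole argument will be the formal dual of the proof of \propref{good nd}, with ``action'' replaced throughout by ``normal coaction'' and with \corref{LD} replaced by classical Landstad duality for coactions (\thmref{ldcn}).

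For the first requirement, I would take an arbitrary object $(C,\alpha,\mu)$ of $\wacn$ and note that $F(C,\alpha,\mu)=C$. Since $(C,\alpha,\mu)$ is by definition an equivariant action, \thmref{ldcn} supplies a normal coaction $(A,\delta)$ --- concretely $(A,\delta)=\fin(C,\alpha,\mu)=(C^{\alpha,\mu},\delta^\mu)$ --- together with an isomorphism $A\rtimes_\delta G\variso C$, so $C\simeq\cpn(A,\delta)$ lies in the essential image of $\cpn$.

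For the unique isomorphism lifting property, given an object $(C,\alpha,\mu)$ of $\wacn$ and an isomorphism $\theta\colon C\variso D$ in $\csn$, I would transport the extra structure along $\theta$, setting $\beta_s=\theta\circ\alpha_s\circ\theta\inv$ and $\nu=\theta\circ\mu$. Then $(D,\beta,\nu)$ is again an equivariant action, hence an object of $\wacn$, and $\theta$ promotes to an isomorphism $\wilde\theta\colon(C,\alpha,\mu)\variso(D,\beta,\nu)$ in $\wacn$ with $F(\wilde\theta)=\theta$. Uniqueness of $\wilde\theta$ is then immediate from faithfulness of $F$ (and, as a byproduct, this shows the image of $F$ is isomorphism-closed).

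I do not expect a genuine obstacle here: the content is entirely parallel to the action case, and the only point that even nominally needs attention is that transporting $(\alpha,\mu)$ along a $C^*$-isomorphism again produces a legitimate object of $\wacn$ --- i.e.\ that $\beta$ is a (strongly continuous) action and $\nu$ a nondegenerate $\rt-\beta$ equivariant homomorphism --- which is clear. If any difficulty arises, it would be purely bookkeeping in matching the coaction-side notation of \secref{class-coact} to the present setup.
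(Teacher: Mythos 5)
Your proposal is correct and is precisely the ``routine adaptation from the action case'' that the paper's proof consists of: verify that the image of $F$ lies in the essential image of $\cpn$ via classical Landstad duality for coactions (\thmref{ldcn}), and obtain unique isomorphism lifting by transporting $(\alpha,\mu)$ along $\theta$ and invoking faithfulness of $F$. No difference in approach and no gap.
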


\begin{proof}
This is a routine adaptation from the action case.
\end{proof}

\subsection{Enchilada Landstad duality for coactions}

The \emph{enchilada category $\coe$ of coactions}
has
the same objects as $\con$,
but now when we say
$[X,\zeta]\colon (A,\delta)\to (B,\epsilon)$ is a morphism in the category we mean
$[X]\colon A\to B$ in $\cse$
and $\zeta$ is a $\delta-\epsilon$ compatible action of $G$ on $X$.
Isomorphisms in the category are equivariant Morita equivalences.

The \emph{enchilada equivariant category $\wace$ of actions}
has
the same objects as $\wacn$,
but now when we say
$[Y,\gamma]\colon (C,\alpha,\mu)\to (D,\beta,\nu)$ is a morphism in the category we mean
$[Y]\colon C\to D$ in $\cse$ and
$\gamma$ is a $\alpha-\beta$ compatible coaction of $G$ on $Y$.
The isomorphisms in the category are precisely the equivariant Morita equivalences of the actions.

The \emph{enchilada crossed-product functor $\cpe$} is
the same as $\cpn$
on objects,
but is given
on morphisms by
\[
\bigl([X,\zeta]\colon (A,\delta)\to (B,\epsilon)\bigr)
\mapsto
\bigl([X\rtimes_\zeta G]\colon A\rtimes_\delta G\to B\rtimes_\epsilon G\bigr).
\]

The \emph{enchilada equivariant crossed-product functor $\wcpe$} is 
the same as $\wcpn$
on objects,
but is given
on morphisms by
\begin{multline*}
\bigl([X,\zeta]\colon (A,\delta)\to (B,\epsilon)\bigr)
\mapsto
\\
\bigl([X\rtimes_\zeta G,\what\zeta]\colon 
(A\rtimes_\delta G,\what\delta,j_G)\to (B\rtimes_\epsilon G,\what\epsilon,j_G\bigr).
\end{multline*}

The following two results are routine modifications of the corresponding Propositions~\ref{fixed correspondence} and \ref{action up down}.

In the following proposition, the existence of the $C^\alpha-D^\beta$ correspondence $X$ is established (with greater generality) in \cite[Corollary~6.4]{BusEch2}. 
The construction, which is based upon a technique introduced in \cite{enchilada}, 
is essentially
the same 
one that
we used 
in \propref{fixed correspondence}.

\begin{prop}\label{fixed correspondence 2}
Let $(C,\alpha,\mu)$ and $(D,\beta,\nu)$ be equivariant actions,
and let $(Y,\gamma)$ be a $(C,\alpha)-(D,\beta)$ correspondence action.
Then there are a
$(C^\alpha,\delta^\mu)-(D^\beta,\delta^\nu)$
correspondence coaction $(X,\zeta)$
and an isomorphism
\[
\Theta\colon (X\rtimes_\zeta G,\what\zeta)\variso (Y,\gamma)
\]
of
$(C,\alpha)-(D,\beta)$ correspondence coactions,
characterized by
\[
\Theta\bigl(j_X(x)\cdot j_G(f)\bigr)=x\cdot \nu(f)
\midtext{for}x\in X,f\in C_0(G).
\]
\end{prop}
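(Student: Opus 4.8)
The plan is to mirror the proof of \propref{fixed correspondence}, interchanging the roles of action and coaction and replacing the dual coaction on a crossed product by the dual action. First I would form, as in \subsecref{linking}, the linking algebra $L:=L(Y)=\smtx{K&Y\\{*}&D}$ of the correspondence action $(Y,\gamma)$, where $K:=\KK(Y)$: there is an action $\sigma$ of $G$ on $K$ making $\gamma$ into a $\sigma-\beta$ compatible action and the canonical $\varphi_C\colon C\to M(K)$ into an $\alpha-\sigma$ equivariant homomorphism, and an action $\tau=\smtx{\sigma&\gamma\\{*}&\beta}$ of $G$ on $L$, with $(L\rtimes_\tau G,\what\tau)$ decomposing in the usual matrix form. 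Setting $\mu':=\varphi_C\circ\mu$, which is $\rt-\sigma$ equivariant, the diagonal homomorphism
\[
N:=\mtx{\mu'&0\\0&\nu}\colon C_0(G)\to M(L)
\]
is $\rt-\tau$ equivariant, so $(L,\tau,N)$ is an equivariant action. Classical Landstad duality (\thmref{ldcn}) then provides the generalized fixed-point algebra $L^{\tau,N}$, its normal coaction $\delta^N:=\ad N$, and an $\what{\delta^N}-\tau$ equivariant isomorphism $\theta_L\colon (L^{\tau,N}\rtimes_{\delta^N}G,\what{\delta^N})\variso(L,\tau)$ with $\theta_L\circ j_{L^{\tau,N}}$ the inclusion and $\theta_L\circ j_G=N$.

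Next I would observe that the corner projections $p=\smtx{1&0\\0&0}$ and $q=\smtx{0&0\\0&1}$ of $M(L)$ are $\tau$-fixed, commute with $N(C_0(G))$, and satisfy $\delta^N(p)=p\otimes 1$ and $\delta^N(q)=q\otimes 1$ (because $(N\otimes\id)(w_G)$ is block diagonal); from this one checks that $p,q$ normalize $L^{\tau,N}$, giving a matrix decomposition $L^{\tau,N}=\smtx{K^\sigma&X\\{*}&D^\beta}$ with $X:=pL^{\tau,N}q$. Thus $X$ is a $K^\sigma-D^\beta$ Hilbert bimodule, hence, via $\fin(\varphi_C)\colon C^\alpha\to M(K^\sigma)$, a $C^\alpha-D^\beta$ correspondence. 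Since $N$ is diagonal, the inner coaction $\ad N$ on $M(L)$ respects the matrix structure: on the diagonal corners it restricts to the inner coactions $\ad\mu'$ and $\ad\nu$, and on the $(1,2)$ corner it induces the coaction of $G$ on $Y$ given by the corresponding $w_G$-twist. Restricting to $L^{\tau,N}$ exhibits $\delta^N=\smtx{\delta^{\mu'}&\zeta\\{*}&\delta^\nu}$, where $\zeta:=\delta^N|_X$ is a $\delta^{\mu'}-\delta^\nu$ compatible coaction on $X$, hence $\delta^\mu-\delta^\nu$ compatible via $\fin(\varphi_C)$; so $(X,\zeta)$ is the desired $(C^\alpha,\delta^\mu)-(D^\beta,\delta^\nu)$ correspondence coaction.

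Then, since $p$ is $\tau$-fixed and $\delta^N(p)=p\otimes 1$, the projection $j_{L^{\tau,N}}(p)\in M(L^{\tau,N}\rtimes_{\delta^N}G)$ yields a matrix decomposition
\[
L^{\tau,N}\rtimes_{\delta^N}G=\mtx{K^\sigma\rtimes_{\delta^{\mu'}}G&X\rtimes_\zeta G\\{*}&D^\beta\rtimes_{\delta^\nu}G},\qquad \what{\delta^N}=\mtx{\what{\delta^{\mu'}}&\what\zeta\\{*}&\what{\delta^\nu}},
\]
and $\theta_L$ preserves the corner projections, so it restricts on the $(1,2)$ corner to a $\what\zeta-\gamma$ equivariant Hilbert-bimodule isomorphism $(\theta_K,\Theta,\theta_D)\colon (K^\sigma\rtimes_{\delta^{\mu'}}G,\,X\rtimes_\zeta G,\,D^\beta\rtimes_{\delta^\nu}G)\variso(K,Y,D)$. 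Combining with the $\what{\delta^\mu}-\alpha$ equivariant isomorphism $\theta_C\colon C^\alpha\rtimes_{\delta^\mu}G\variso C$ of \thmref{ldcn} and the identity $\theta_K\circ\varphi_{C^\alpha\rtimes_{\delta^\mu}G}=\varphi_C\circ\theta_C$ from nondegenerate Landstad duality, and incorporating $\theta_C$ and $\theta_D$, I get that $\Theta$ is an isomorphism $(X\rtimes_\zeta G,\what\zeta)\variso(Y,\gamma)$ of $(C,\alpha)-(D,\beta)$ correspondence actions. The characterization follows from $p\,j_{L^{\tau,N}}\,q=j_X$ and $qNq=\nu$: for $x\in X$ and $f\in C_0(G)$ one computes $\Theta\bigl(j_X(x)\cdot j_G(f)\bigr)=x\cdot\nu(f)$, and these elements span $X\rtimes_\zeta G$.

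The argument is structurally identical to that of \propref{fixed correspondence}, so the main obstacle is bookkeeping peculiar to coactions rather than anything conceptual. The two places I would be most careful are: verifying that the corner projections are multipliers of the Landstad algebra $L^{\tau,N}$ (here characterized by conditions \eqref{restrict}--\eqref{fixed} rather than \eqref{fix}--\eqref{continuous}), and making precise the matrix decomposition of the crossed product $L^{\tau,N}\rtimes_{\delta^N}G$ by the \emph{non-dual} coaction $\delta^N$ along the $\delta^N$-invariant corner projection $p$. These are exactly the sort of computations peculiar to coactions that were elided elsewhere in the paper and need to be written out here; everything else transfers essentially verbatim from the action case.
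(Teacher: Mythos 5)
Your proposal is correct and follows exactly the route the paper takes: the paper's own proof simply declares the argument completely parallel to that of \propref{fixed correspondence}, noting only the matrix decomposition of the inner coaction on the linking algebra $L(Y)=\smtx{K&Y\\{*}&D}$ as $\smtx{\ad \kappa&\ad (\kappa,\nu)\\{*}&\ad \nu}$, which is precisely the decomposition of $\ad N$ you identify (with $\kappa=\varphi_C\circ\mu$). Your write-up supplies the omitted details faithfully, including the two bookkeeping points peculiar to coactions that genuinely need care, so there is nothing to add.
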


\begin{proof}
The argument is completely parallel to that of \propref{fixed correspondence},
and so we omit the details.
The only point we should mention is that
the decomposition of the associated inner coaction on the linking algebra (see \subsecref{linking})
$L(Y)=\smtx{K&Y\\{*}&D}$ takes the form
\[
\mtx{\ad \kappa&\ad (\kappa,\nu)\\{*}&\ad \nu},
\]
where 
$\kappa$ and $\nu$ are nondegenerate homomorphisms of $C_0(G)$ into $M(K)$ and $M(D)$, respectively, and
$\ad (\kappa,\nu)$ denotes the coaction of $G$ on $Y$
given by $y\mapsto \kappa\otimes\id(w_G)\cdot (y\otimes 1)\cdot \nu\otimes\id(w_G^*)$.
\end{proof}

\begin{notn}\label{fix corr def 2}
We denote the $C^\alpha-D^\beta$ correspondence $X$ constructed in the above proof
by $Y^{\gamma,\mu,\nu}$, or just $Y^\gamma$ if confusion is unlikely,
and we denote the coaction $\zeta$ by $\delta^{\mu,\nu}$.
\end{notn}

\begin{prop}\label{coaction up down}
Let $(A,\delta)$ and $(B,\epsilon)$ be normal coactions,
and let $(X,\zeta)$ be an $(A,\delta)-(B,\epsilon)$ correspondence coaction.
Then
$j_X\colon X\to M(X\rtimes_\zeta G)$ gives
an isomorphism
\[
(X,\zeta)
\iso 
\bigl((X\rtimes_\zeta G)^{\what\zeta},\delta^{j_G^\delta,j_G^\epsilon}\bigr)
\]
of $(A,\delta)-(B,\epsilon)$ correspondence coactions.
\end{prop}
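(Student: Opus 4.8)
The plan is to mirror the proof of \propref{action up down}, systematically replacing ``action'' by ``normal coaction'', each dual coaction by the corresponding dual action, and each canonical map $i_G$ by $j_G$, and invoking classical Landstad duality for coactions (\thmref{ldcn}, together with the Example following \defnref{fix action}) wherever the proof of \propref{action up down} invokes \thmref{lda}.

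First I would pass to the linking algebra. Writing $K=\KK(X)$ and $L=L(X)=\smtx{K&X\\{*}&B}$, \subsecref{linking} supplies a coaction $\mu$ on $K$, a coaction $\nu=\smtx{\mu&\zeta\\{*}&\epsilon}$ on $L$, the $\delta-\mu$ equivariant canonical homomorphism $\varphi_A\colon A\to M(K)=\LL(X)$, and the natural identification
\[
(L\rtimes_\nu G,\what\nu)=\left(\mtx{K\rtimes_\mu G&X\rtimes_\zeta G\\{*}&B\rtimes_\epsilon G},\mtx{\what\mu&\what\zeta\\{*}&\what\epsilon}\right)
\]
of the crossed product with its dual action. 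Because \thmref{ldcn} and the Example following \defnref{fix action} are stated for normal coactions, one must first note that $\mu$ and $\nu$ are normal when $\delta$ and $\epsilon$ are; this is dual to the maximality assertion recalled in \subsecref{linking}, and it can also be read off from the fact that the $(2,2)$-corner of $j_L$ is $j_B$, which is injective since $\epsilon$ is normal. Granting this, classical Landstad duality for coactions applies to $(K,\mu)$ and to $(L,\nu)$, yielding in particular a $\nu-\delta^{j_G^\nu}$ equivariant isomorphism $j_L\colon(L,\nu)\iso\bigl((L\rtimes_\nu G)^{\what\nu},\delta^{j_G^\nu}\bigr)$ that identifies $L$ with the generalized fixed-point algebra.

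Next I would cut down by the corner projections $p=\smtx{1&0\\0&0}$ and $q=\smtx{0&0\\0&1}$ in $M(L)$. Since the dual action satisfies $\what\nu_s\circ j_L=j_L$, the projections $j_L(p),j_L(q)$ are $\what\nu$-invariant, and, being images under the isomorphism $j_L$ of multipliers of $L$, they lie in the multiplier algebra of $(L\rtimes_\nu G)^{\what\nu}$; under the identification above they become the evident corner projections of $M(L\rtimes_\nu G)$. Since the crossed-product identification respects the canonical homomorphisms, $j_L$ decomposes on corners as $\smtx{j_K&j_X\\{*}&j_B}$, and hence $(L\rtimes_\nu G)^{\what\nu}=j_L(L)$ inherits a matrix decomposition with diagonal corners $(K\rtimes_\mu G)^{\what\mu}$ and $(B\rtimes_\epsilon G)^{\what\epsilon}$ and upper-right corner $(X\rtimes_\zeta G)^{\what\zeta}$, together with a matching matrix decomposition of the inner coaction $\delta^{j_G^\nu}$. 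Restricting $j_L$ to the corners then gives a Hilbert-bimodule isomorphism
\[
(j_K,j_X,j_B)\colon(K,X,B)\iso\bigl((K\rtimes_\mu G)^{\what\mu},(X\rtimes_\zeta G)^{\what\zeta},(B\rtimes_\epsilon G)^{\what\epsilon}\bigr)
\]
that is equivariant for $\zeta$ and for the off-diagonal entry of the above decomposition of $\delta^{j_G^\nu}$.

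Finally, using that $\varphi_A$ is $\delta-\mu$ equivariant together with the compatibility $j_K\circ\varphi_A=\varphi_{(A\rtimes_\delta G)^{\what\delta}}\circ j_A$ supplied by nondegenerate Landstad duality for coactions, and transporting the coefficient-algebra structures along the isomorphisms $j_A$ and $j_B$, the middle map $j_X$ (which is the $(1,2)$-corner of $j_L$) becomes the asserted isomorphism $(X,\zeta)\iso\bigl((X\rtimes_\zeta G)^{\what\zeta},\delta^{j_G^\delta,j_G^\epsilon}\bigr)$ of $(A,\delta)-(B,\epsilon)$ correspondence coactions. The only step I expect to require genuine work is identifying the off-diagonal entry of $\delta^{j_G^\nu}$ with the correspondence coaction $\delta^{j_G^\delta,j_G^\epsilon}$ of \notnref{fix corr def 2}: in the action case the corresponding off-diagonal term is visibly the matrix of conjugations $x\mapsto U_s\,x\,W_s^*$, whereas for coactions one must unwind the $w_G$-twisted formula $y\mapsto(\kappa\otimes\id)(w_G)\cdot(y\otimes1)\cdot(\nu\otimes\id)(w_G^*)$ for the off-diagonal of an inner coaction on a linking algebra and check that its restriction to the upper-right corner of the fixed-point algebra is exactly the coaction named in \notnref{fix corr def 2}. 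This is precisely the coaction-specific computation left to the reader in the proof of \propref{fixed correspondence 2}, so it should cause no surprises; apart from it, and from the minor normality point above, the argument is a routine transcription of \propref{action up down}.
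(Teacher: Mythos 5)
Your proposal is correct and follows exactly the route the paper intends: the paper's own proof of this proposition consists of the single remark that the argument is completely parallel to that of \propref{action up down}, and your write-up is precisely that parallel argument (linking algebra $L(X)$, classical Landstad duality for the coaction $\nu$ on $L$, corner decomposition of the generalized fixed-point algebra, then transport along $j_A$ and $j_B$). The two details you flag --- normality of $\mu$ and $\nu$ when $\epsilon$ is normal, and the identification of the off-diagonal entry of the inner coaction $\delta^{j_G^\nu}$ with $\delta^{j_G^\delta,j_G^\epsilon}$ --- are exactly the coaction-specific points the paper leaves implicit, and your treatment of them is sound.
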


\begin{proof}
The argument is completely parallel to that of \propref{action up down},
and so we omit the details.
\end{proof}

As for the case of actions, the above two results imply the following.

\begin{thm}\label{enchilada invert2}
The enchilada equivariant crossed-product functor $\wcpe$ is an equivalence,
and there is a quasi-inverse
$\fie\colon \wace\to \coe$ 
with
morphism map
\begin{multline*}
\bigl([Y,\gamma]\colon (C,\alpha,\mu)\to (D,\beta,\nu)\bigr)
\\\mapsto
\bigl([Y^\gamma,\delta^{\mu,\nu}]\colon (C^\alpha,\delta^\mu)\to (D^\beta,\delta^\nu)\bigr)
\end{multline*}
and the same object map as $\fin\colon \wacn\to \con$.
\end{thm}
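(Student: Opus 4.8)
The plan is to imitate the proof of \thmref{enchilada invert} step for step, substituting \propref{fixed correspondence 2} and \propref{coaction up down} for \propref{fixed correspondence} and \propref{action up down}, and classical Landstad duality for coactions (\thmref{ldcn}) for \thmref{lda}. First I would observe that $\wcpe$ is essentially surjective on objects: it is essentially surjective for the nondegenerate equivariant categories (by nondegenerate Landstad duality for coactions), which have the same objects and in which isomorphism is stronger than in the enchilada categories. It then remains to show that $\wcpe$ is full and faithful, i.e., that for any normal coactions $(A,\delta)$ and $(B,\epsilon)$ it restricts to a bijection from $\mor_{\coe}\bigl((A,\delta),(B,\epsilon)\bigr)$ onto $\mor_{\wace}\bigl((A\rtimes_\delta G,\what\delta,j_G),(B\rtimes_\epsilon G,\what\epsilon,j_G)\bigr)$.

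Faithfulness (injectivity of this map) is immediate from \propref{coaction up down}: if $[X,\zeta]\colon (A,\delta)\to (B,\epsilon)$ is a morphism in $\coe$, then $j_X$ identifies $(X,\zeta)$, up to isomorphism of $(A,\delta)-(B,\epsilon)$ correspondence coactions, with $\bigl((X\rtimes_\zeta G)^{\what\zeta},\delta^{j_G^\delta,j_G^\epsilon}\bigr)$, so $(X,\zeta)$ is recovered from $\wcpe[X,\zeta]=[X\rtimes_\zeta G,\what\zeta]$, and two morphisms with the same image under $\wcpe$ must coincide. For fullness (surjectivity), let $[Y,\gamma]\colon (A\rtimes_\delta G,\what\delta,j_G)\to (B\rtimes_\epsilon G,\what\epsilon,j_G)$ be a morphism in $\wace$, so $(Y,\gamma)$ is an $(A\rtimes_\delta G,\what\delta)-(B\rtimes_\epsilon G,\what\epsilon)$ correspondence action. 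Applying \propref{fixed correspondence 2} to the equivariant actions $(A\rtimes_\delta G,\what\delta,j_G^\delta)$ and $(B\rtimes_\epsilon G,\what\epsilon,j_G^\epsilon)$, together with \notnref{fix corr def 2}, I obtain a correspondence coaction $\bigl(Y^\gamma,\delta^{j_G^\delta,j_G^\epsilon}\bigr)$ between $\bigl((A\rtimes_\delta G)^{\what\delta},\delta^{j_G^\delta}\bigr)$ and $\bigl((B\rtimes_\epsilon G)^{\what\epsilon},\delta^{j_G^\epsilon}\bigr)$, together with an isomorphism
\[
\Theta\colon \bigl(Y^\gamma\rtimes_{\delta^{j_G^\delta,j_G^\epsilon}}G,\,\what{\delta^{j_G^\delta,j_G^\epsilon}}\bigr)\variso (Y,\gamma)
\]
of correspondence actions. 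Incorporating the classical-Landstad-duality isomorphisms $(A,\delta)\simeq\bigl((A\rtimes_\delta G)^{\what\delta},\delta^{j_G^\delta}\bigr)$ and $(B,\epsilon)\simeq\bigl((B\rtimes_\epsilon G)^{\what\epsilon},\delta^{j_G^\epsilon}\bigr)$ from \thmref{ldcn}, we may regard $\bigl(Y^\gamma,\delta^{j_G^\delta,j_G^\epsilon}\bigr)$ as an $(A,\delta)-(B,\epsilon)$ correspondence coaction whose crossed product is isomorphic, via $\Theta$, to $(Y,\gamma)$; hence $[Y^\gamma,\delta^{j_G^\delta,j_G^\epsilon}]$ is a $\wcpe$-preimage of $[Y,\gamma]$.

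This establishes that $\wcpe$ is a category equivalence, and the quasi-inverse $\fie\colon\wace\to\coe$ is then read off from the construction: on objects it coincides with $\fin$, $(C,\alpha,\mu)\mapsto (C^{\alpha,\mu},\delta^\mu)$, and on morphisms it sends $[Y,\gamma]\colon(C,\alpha,\mu)\to(D,\beta,\nu)$ to $[Y^\gamma,\delta^{\mu,\nu}]\colon(C^\alpha,\delta^\mu)\to(D^\beta,\delta^\nu)$, the natural isomorphisms $\fie\circ\wcpe\simeq\id$ and $\wcpe\circ\fie\simeq\id$ being furnished by \propref{coaction up down} and \propref{fixed correspondence 2} respectively. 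I do not anticipate a genuinely hard step: the substantive work has been packaged into \propref{fixed correspondence 2} and \propref{coaction up down}, and the only point demanding care is the bookkeeping --- confirming that the dual action $\what\zeta$ and the equivariant homomorphisms $j_G$ are carried correctly through the identifications, exactly as in the proof of \thmref{enchilada invert}.
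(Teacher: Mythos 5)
Your proposal is correct and is exactly the argument the paper intends: its own proof of this theorem consists of the single remark that the argument is completely parallel to that of the action-case theorem, and your write-up supplies precisely that parallel, with Propositions~\ref{fixed correspondence 2} and \ref{coaction up down} and classical Landstad duality for coactions substituted in the right places.
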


\begin{proof}
The argument is completely parallel to that of \propref{enchilada invert},
and so we omit the details.
\end{proof}

Let $F\colon\wace\to\cse$ denote the forgetful functor defined on objects by $(C,\delta,V)\mapsto C$.
The factoring of $\cpe$ into a composition of $\wcpe$ followed by $F$ gives an inversion of $\cpe$,
which we call \emph{enchilada Landstad duality for actions}.

\begin{rem}
This inversion is not good,
since when $G$ is abelian the coactions become actions of the dual group,
and we have observed earlier that enchilada Landstad duality for actions is not a good inversion.
Nevertheless, just as for actions, the forgetful functor is faithful and essentially surjective
(because we can again use crossed-product duality).
\end{rem}

\begin{rem}
It might be of interest to note that our use of category-theory technique in the above proof obviated the need to directly establish that the morphism map $[Y,\gamma]\mapsto [Y^\gamma,\delta^{\mu,\nu}]$ is functorial; this would have required that we prove an isomorphism of the form
\[
(Y\otimes_D Z)^{\gamma\otimes \rho}\simeq Y^\gamma\otimes_{D^\beta} Z^\rho,
\]
whereas in fact this follows from the properties of category equivalences.
In contrast, the functoriality in \cite[Corollary~6.4]{BusEch2} depends upon \cite[Proposition~6.1]{BusEch2}, which proves such a tensor-product isomorphism; this was necessary in \cite{BusEch2} because their fixed-point correspondence functor was not presented as a quasi-inverse to a known functor.
\end{rem}

\subsection{Outer Landstad duality for coactions}

The \emph{outer category $\coo$ of coactions}
has the same objects as $\con$,
but now when we say $(\phi,U)\colon (A,\delta)\to (B,\epsilon)$ is a morphism in the category
we mean $U$ is an $\epsilon$-cocycle and
$\phi\colon A\to B$ is a morphism in $\csn$
that is $\delta-\ad U\circ \epsilon$ equivariant.

\begin{lem}
The category $\coo$ introduced above is well-defined.
\end{lem}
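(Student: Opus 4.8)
The plan is to transcribe the proof of \lemref{outer well defined} from actions to normal coactions, replacing the cocycle-multiplication \lemref{cocycle multiply} by its coaction counterpart \lemref{cocycle multiply coaction}. As in that proof, the only axiom requiring real work is the well-definedness of composition; once that is done, identity morphisms and associativity are routine. The one ingredient with no completely verbatim analogue is the behaviour of $\epsilon$-cocycles under equivariant homomorphisms, so I would establish that first: if $\psi\colon B\to M(C)$ is a nondegenerate $\epsilon-\lambda$ equivariant homomorphism of coactions and $U$ is an $\epsilon$-cocycle, then $(\psi\otimes\id)(U)$ is a $\lambda$-cocycle, and $\psi$ is $\ad U\circ\epsilon-\ad((\psi\otimes\id)(U))\circ\lambda$ equivariant. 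Cocycle condition (i) for $(\psi\otimes\id)(U)$ follows by applying $\psi\otimes\id\otimes\id$ to $(\id\otimes\delta_G)(U)=(U\otimes 1)(\epsilon\otimes\id)(U)$ and rewriting $(\psi\otimes\id\otimes\id)\circ(\epsilon\otimes\id)$ as $(\lambda\otimes\id)\circ(\psi\otimes\id)$ via equivariance; the equivariance statement then follows from the identity $(\psi\otimes\id)(U\epsilon(b)U^*)=(\psi\otimes\id)(U)\,\lambda(\psi(b))\,(\psi\otimes\id)(U)^*$. Both are short computations; the only mildly delicate point is cocycle condition (ii), namely that $\ad((\psi\otimes\id)(U))\circ\lambda(C)(1\otimes C^*(G))$ lands in $C\otimes C^*(G)$ and not merely in its multiplier algebra, which I expect to follow from nondegeneracy of $\psi$ together with the nondegeneracy condition built into the coaction $\lambda$.

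With this fact in hand, given composable morphisms $(\phi,U)\colon(A,\delta)\to(B,\epsilon)$ and $(\psi,V)\colon(B,\epsilon)\to(C,\zeta)$ in $\coo$, I would define the composite to be
\[
(\psi\circ\phi,\ (\psi\otimes\id)(U)\,V)\colon (A,\delta)\to(C,\zeta),
\]
which mirrors the product $(\psi\circ u)v$ appearing in \lemref{outer well defined}, and then check the two required properties. First, $(\psi\otimes\id)(U)\,V$ is a $\zeta$-cocycle: since $\psi$ is $\epsilon-\ad V\circ\zeta$ equivariant, the preliminary fact with $\lambda=\ad V\circ\zeta$ shows that $(\psi\otimes\id)(U)$ is an $(\ad V\circ\zeta)$-cocycle, whence \lemref{cocycle multiply coaction}, applied to the $\zeta$-cocycle $V$ and the $(\ad V\circ\zeta)$-cocycle $(\psi\otimes\id)(U)$, gives that the product $(\psi\otimes\id)(U)\,V$ is a $\zeta$-cocycle. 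Second, $\psi\circ\phi$ is $\delta-\ad((\psi\otimes\id)(U)\,V)\circ\zeta$ equivariant: by the preliminary fact $\psi$ is $\ad U\circ\epsilon-\ad((\psi\otimes\id)(U))\circ\ad V\circ\zeta$ equivariant, and since $\ad((\psi\otimes\id)(U))\circ\ad V\circ\zeta=\ad((\psi\otimes\id)(U)\,V)\circ\zeta$, composing with the $\delta-\ad U\circ\epsilon$ equivariant map $\phi$ yields the claim.

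Finally, $(\id_A,1)$ is a morphism $(A,\delta)\to(A,\delta)$ that serves as an identity, and associativity of the composition reduces, exactly as for actions, to the single identity $(\phi_3\phi_2\otimes\id)(U_1)\,(\phi_3\otimes\id)(U_2)\,U_3$ computed in two ways, which is immediate from the multiplicativity and functoriality of $\phi\mapsto\phi\otimes\id$. I expect the main obstacle to be purely organizational: getting the composite cocycle right, namely that $U$ is transported across $\psi$ and then multiplied by $V$ on the correct side, in exact parallel with the order $(\psi\circ u)v$ used in \lemref{outer well defined}. Once the preliminary fact about equivariant images of cocycles is in place, everything else is a routine transcription of the action argument.
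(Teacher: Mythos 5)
Your proposal is correct and takes essentially the same route as the paper: the same composite cocycle $(\psi\otimes\id)(U)\,V$, the same reduction of the cocycle property to \lemref{cocycle multiply coaction}, and the same equivariance computation for $\psi\circ\phi$. The only difference is that the paper outsources your ``preliminary fact'' (that an equivariant nondegenerate homomorphism carries an $\epsilon$-cocycle $U$ to a cocycle $(\psi\otimes\id)(U)$ for the target coaction, with the attendant equivariance) to \cite[Remark~1.14]{fischer} rather than verifying it by hand as you sketch.
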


\begin{proof}
The outline of the proof is completely parallel to that of \lemref{outer well defined};
we only include 
those calculations
that are peculiar to coactions.
The crucial thing is to check that 
we can compose morphisms:
given morphisms
$(\phi,U)\colon (A,\alpha)\to (B,\beta)$ and $(\psi,V)\colon (B,\beta)\to (C,\zeta)$ 
in $\coo$,
we must show
\begin{enumerate}
\item
$(\psi\otimes\id)(U)V$ is a $\zeta$-cocycle, and

\item
$\psi\circ\phi$ is $\delta-\ad \bigl[(\psi\otimes\id)(U)V\bigr]\circ\zeta$ equivariant.
\end{enumerate}

For (i), 
as
we show below, $(\psi\otimes\id)(U)$ is an $\ad V\circ\zeta$ cocycle, and hence it follows from \lemref{cocycle multiply coaction} that
$(\psi\otimes\id)(U)V$ is a $\zeta$-cocycle.

For (ii), we appeal to \cite[Remark~1.14]{fischer},
concerning naturality of cocycles:
$\psi$ is $\epsilon-\ad V\circ\zeta$ equivariant
and $U$ is an $\epsilon$-cocycle, so
$(\psi\otimes\id)(U)$ is an $\ad V\circ\zeta$ cocycle
and $\psi$ is
\[
\ad U\circ\epsilon-\ad (\psi\otimes\id)(U)\circ \ad V\circ\zeta
\]
equivariant.
Note that
\[
\ad (\psi\otimes\id)(U)\circ \ad V\circ\zeta=\ad [(\psi\otimes\id)(U)V]\circ\zeta.
\]
Since $\phi$ is $\delta-\ad U\circ\epsilon$ equivariant,
the composition $\psi\circ\phi$ is
\[
\delta-\ad \bigl[(\psi\otimes\id)(U)V\bigr]\circ\zeta
\]
equivariant.
This proves the claim, and so composition of morphisms is well-defined.

It is obvious that there are identity morphisms,
and a routine computation shows that composition is associative.
\end{proof}

Isomorphisms in the category are just outer conjugacies of normal coactions.

Parallel to the fixed-point equivariant category of coactions,
we would now like to define
the \emph{fixed-point equivariant category $\waco$ of actions}
in which the objects are the same as in $\wacn$,
and in which a morphism
$\psi\colon (C,\alpha,\mu)\to (D,\beta,\nu)$ 
is a morphism
$\psi\colon C\to D$ 
in $\csn$
that is $\alpha-\beta$ equivariant
and satisfies
\begin{equation}\label{first kind}
D^{\beta,\nu}=D^{\beta,\psi\circ \mu}.
\end{equation}
However, just as before we do not see how to prove directly that composition of morphisms will be well-defined.
Moreover, in this case we cannot use the indirect approach that we did for $\wcoo$
because we have no fully working version of Pedersen's theorem for outer conjugacy of coactions.

Consequently,
in this case we need to modify the definition of morphisms in the category.
In fact, we will replace \eqref{first kind} by a condition that is
formally weaker (see \remref{weaker}),
but for all we know the
two definitions are
equivalent ---
fortunately, our result will not depend upon the answer to this question.

So, we start over:
we define the \emph{fixed-point equivariant category $\waco$ of actions}
to have the same objects as $\wacn$,
namely equivariant actions,
and
when we say $\psi\:(C,\alpha,\mu)\to (D,\beta,\nu)$ is a \emph{morphism} in $\waco$,
we mean that $\phi\:C\to D$ is a morphism in $\csn$
that is $\alpha-\beta$ equivariant and for which
the canonical extension
\[
\bar\phi\:M(C)\to M(D)
\]
restricts to a nondegenerate homomorphism
\[
\bar\phi|\:C^{\alpha,\mu}\to M(D^{\beta,\nu}).
\]

\begin{lem}\label{compose}
With the above definition of morphism, the category $\waco$ is well-defined.
\end{lem}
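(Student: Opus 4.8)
The plan is to mimic the structure of the proof of \lemref{outer well defined}: once composition of morphisms is shown to stay inside $\waco$, everything else is immediate. The identity $\id_C$ on an equivariant action $(C,\alpha,\mu)$ is $\alpha-\alpha$ equivariant, and its canonical extension $\id_{M(C)}$ restricts to the identity of $C^{\alpha,\mu}$, which is a nondegenerate homomorphism $C^{\alpha,\mu}\to M(C^{\alpha,\mu})$; since composition in $\waco$ is, on underlying data, just composition of $\csn$-morphisms, associativity and the unit laws are inherited from $\csn$. So the whole content is to check that morphisms compose.

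Accordingly, let $\phi\colon C\to D$ and $\rho\colon D\to E$ represent morphisms
\[
\psi\colon(C,\alpha,\mu)\to(D,\beta,\nu),\qquad \chi\colon(D,\beta,\nu)\to(E,\gamma,\omega)
\]
in $\waco$. First I would record the routine points: $\rho\circ\phi$ is a nondegenerate homomorphism $C\to M(E)$, it is $\alpha-\gamma$ equivariant, and its canonical extension to $M(C)$ is $\bar\rho\circ\bar\phi$. It then remains to show that $\bar\rho\circ\bar\phi$ carries $C^{\alpha,\mu}$ into $M(E^{\gamma,\omega})$ by a nondegenerate homomorphism. By hypothesis, $\bar\phi$ restricts to a nondegenerate homomorphism $\bar\phi|\colon C^{\alpha,\mu}\to M(D^{\beta,\nu})$ and $\bar\rho$ restricts to a nondegenerate homomorphism $\bar\rho|\colon D^{\beta,\nu}\to M(E^{\gamma,\omega})$. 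Extending $\bar\rho|$ canonically to the multiplier algebra gives a nondegenerate homomorphism $M(D^{\beta,\nu})\to M(E^{\gamma,\omega})$, and precomposing with $\bar\phi|$ gives a nondegenerate homomorphism $C^{\alpha,\mu}\to M(E^{\gamma,\omega})$, being a composite of two $\csn$-morphisms. The crux is to identify this composite with the restriction of $\bar\rho\circ\bar\phi$ to $C^{\alpha,\mu}$.

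For that identification I would use that, by condition \eqref{generate} of \thmref{ldcn} applied to $(D,\beta,\nu)$ and to $(E,\gamma,\omega)$, the generalized fixed-point algebras $D^{\beta,\nu}$ and $E^{\gamma,\omega}$ are \emph{nondegenerate} $C^*$-subalgebras of $M(D)$ and $M(E)$ respectively (e.g.\ $\clspn\{D^{\beta,\nu}\nu(C_0(G))\}=D$ forces $D^{\beta,\nu}\cdot D$ to be dense in $D$). Hence $M(D^{\beta,\nu})$ embeds canonically in $M(D)$ and $M(E^{\gamma,\omega})$ in $M(E)$, and on bounded sets the strict topology of each multiplier algebra agrees with the one induced from the larger one. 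A straightforward strict-continuity argument then shows that $\bar\rho$, restricted to $M(D^{\beta,\nu})\subseteq M(D)$, coincides with the canonical extension of $\bar\rho|\colon D^{\beta,\nu}\to M(E^{\gamma,\omega})$; combined with $\bar\phi(C^{\alpha,\mu})\subseteq M(D^{\beta,\nu})$, this gives $\bar\rho\circ\bar\phi(C^{\alpha,\mu})\subseteq M(E^{\gamma,\omega})$ and exhibits the restriction as the nondegenerate homomorphism built above. This strictness bookkeeping --- matching $\bar\rho$ on $M(D^{\beta,\nu})$ with the canonical extension of $\bar\rho|$ on $D^{\beta,\nu}$ --- is the only nontrivial ingredient, and it is precisely the step that fails for the stronger formulation \eqref{first kind}, which is why the definition of a morphism of $\waco$ was weakened to the present ``$\bar\phi$ restricts to a nondegenerate homomorphism'' form.
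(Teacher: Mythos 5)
Your proof is correct and follows essentially the same route as the paper's: the paper likewise reduces everything to checking that the restriction of $\bar\rho\circ\bar\psi$ to $C^{\alpha,\mu}$ coincides with the composite $\bar{\bar\rho|}\circ\bar\psi|$ of the restricted nondegenerate homomorphisms, though it dismisses that identification as ``clear from the definitions'' where you supply the strict-continuity bookkeeping. (Only your closing aside is slightly off: the obstruction to working with \eqref{first kind} is not this strictness step but the inability, absent a converse to Pedersen's theorem for coactions, to show that the composite again satisfies \eqref{first kind}.)
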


\begin{proof}
We must check that composition of morphisms is defined.
Once we have done this it will be obvious that composition is associative and that we have identity morphisms.
Suppose that $\rho\:(D,\beta,\nu)\to (E,\gamma,\tau)$ is another morphism,
so that $\bar\rho$ restricts to a nondegenerate homomorphism
\[
\bar\rho|\:D^{\beta,\nu}\to M(E^{\gamma,\tau}).
\]
The composition of $\psi$ and $\rho$ in $\csn$
is the $\alpha-\gamma$ equivariant nondegenerate homomorphism
\[
\bar\rho\circ\psi\:C\to M(E).
\]
On the other hand,
the composition of the nondegenerate homomorphisms
$\bar\psi|$ and $\bar\rho|$
is the nondegenerate homomorphism
\[
\bar{\bar\rho|}\circ\bar\psi|\:C^{\alpha,\mu}\to M(E^{\gamma,\tau}).
\]
It is clear from the definitions that this composition is the restriction of
$\bar{\bar\rho\circ\psi}=\bar\rho\circ\bar\psi$
to $C^{\alpha,\mu}$.
\end{proof}

\begin{rem}
If $\psi\:(C,\alpha,\mu)\to (D,\beta,\nu)$ is a morphism in the nondegenerate category $\wacn$ of equivariant actions,
then we can apply the functor
$\fin$ to get the nondegenerate homomorphism
\[
\fin(\psi)=\bar\psi|\:C^{\alpha,\mu}\to M(D^{\beta,\nu}).
\]
Thus $\wacn$ is a subcategory of $\waco$ with the same objects.
\end{rem}

\begin{rem}\label{weaker}
Given equivariant actions $(C,\alpha,\mu)$ and $(D,\beta,\nu)$,
if $\psi\:C\to D$ is an $\alpha-\beta$ equivariant morphism in $\csn$
that satisfies \eqref{first kind},
then $\psi$ is a morphism in $\waco$,
because we have a morphism
\[
\psi\:(C,\alpha,\nu)\to (D,\beta,\psi\circ\mu)
\]
in $\wacn$, and hence in $\waco$,
i.e., $\bar\psi$ restricts to a nondegenerate homomorphism
\[
\bar\psi|\:C^{\alpha,\mu}\to M(D^{\beta,\psi\circ\mu})=M(D^{\beta,\nu}).
\]
We do not know whether the converse holds ---
for all we know, the morphisms in $\waco$ might be precisely those equivariant nondegenerate homomorphisms satisfying \eqref{first kind},
completely parallel with $\wcoo$.
We should mention that it is not hard to show that the converse does hold for isomorphisms.
\end{rem}

\begin{thm}\label{waco}
With the above notation,
the assignments
\begin{align*}
(A,\delta)&\mapsto (A\rtimes_\delta G,\what\delta,j_G^\delta)
\\
(\phi,U)&\mapsto \Phi_U\circ (\phi\rtimes G)
\end{align*}
give a functor
$\wcpo\colon \coo\to \waco$
that is essentially surjective and faithful.
\end{thm}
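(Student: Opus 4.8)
The plan is to follow the proof of \propref{dumb functor} for actions, systematically replacing actions by normal coactions, Pedersen's theorem (\thmref{pedersen}) by its coaction analogue (\propref{copedersen}), and the semi-comma category by $\waco$; the only genuinely new points are that membership in $\waco$ carries the extra condition on generalized fixed-point algebras, and that the cocycle twist on the crossed product is recorded through $w_G$ rather than through a unitary homomorphism of $G$. For well-definedness on objects, $(A\rtimes_\delta G,\what\delta,j_G^\delta)$ is an equivariant action because $j_G^\delta\colon C_0(G)\to M(A\rtimes_\delta G)$ is $\rt-\what\delta$ equivariant. Given $(\phi,U)\colon(A,\delta)\to(B,\epsilon)$ in $\coo$, put $\epsilon'=\ad U\circ\epsilon$, a normal coaction since $\epsilon$ is; then $\phi\colon(A,\delta)\to(B,\epsilon')$ is a morphism in $\con$, so $\phi\rtimes G$ is $\what\delta-\what{\epsilon'}$ equivariant and carries $j_G^\delta$ to $j_G^{\epsilon'}$, while \propref{copedersen} supplies a $\what{\epsilon'}-\what\epsilon$ equivariant isomorphism $\Phi_U\colon B\rtimes_{\epsilon'}G\variso B\rtimes_\epsilon G$ with $\Phi_U\circ j_B^{\epsilon'}=j_B^\epsilon$ and the stated $w_G$-identity. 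Hence $\Phi_U\circ(\phi\rtimes G)$ is $\what\delta-\what\epsilon$ equivariant; and since $\Phi_U\circ(\phi\rtimes G)\circ j_A^\delta=\Phi_U\circ j_B^{\epsilon'}\circ\phi=j_B^\epsilon\circ\phi$, the same argument as in the first display of the proof of \thmref{wcpo} (using the Example after \defnref{fix action}, which identifies $(B\rtimes_\epsilon G)^{\what\epsilon,j_G^\epsilon}$ with $j_B^\epsilon(B)$) shows $\Phi_U\circ(\phi\rtimes G)$ satisfies \eqref{first kind}, so by \remref{weaker} it is a morphism in $\waco$. Preservation of identities is immediate, since the identity cocycle induces the identity isomorphism.

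Next I would verify functoriality. For composable morphisms $(\phi,U)\colon(A,\delta)\to(B,\epsilon)$ and $(\psi,V)\colon(B,\epsilon)\to(C,\zeta)$ in $\coo$, the composite is $(\psi\circ\phi,(\psi\otimes\id)(U)V)$, where by the well-definedness argument for $\coo$ the element $(\psi\otimes\id)(U)$ is an $\ad V\circ\zeta$-cocycle (by naturality of cocycles) and hence $(\psi\otimes\id)(U)V$ is a $\zeta$-cocycle (by \lemref{cocycle multiply coaction}). I would show $\wcpo(\psi,V)\circ\wcpo(\phi,U)=\wcpo\big((\psi,V)\circ(\phi,U)\big)$ by checking that the two sides agree after composing with $j_A^\delta$ and after tensoring with $\id$ and evaluating on $(j_G^\delta\otimes\id)(w_G)$, since these generate $A\rtimes_\delta G$. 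On $j_A^\delta$ both sides collapse to $j_C^\zeta\circ\psi\circ\phi$ by repeated use of the relations $\Phi_W\circ j_B=j_B$ from \propref{copedersen} and the intertwining of the $j_A$'s by crossed-product homomorphisms. On $(j_G^\delta\otimes\id)(w_G)$, the left side unwinds, using the intertwining of the $j_G$'s together with the $w_G$-identity of \propref{copedersen} applied twice, to $(j_C^\zeta\otimes\id)\big((\psi\otimes\id)(U)\big)\,(j_C^\zeta\otimes\id)(V)\,(j_G^\zeta\otimes\id)(w_G)$, which equals $(j_C^\zeta\otimes\id)\big((\psi\otimes\id)(U)V\big)\,(j_G^\zeta\otimes\id)(w_G)$, the value that $\wcpo\big((\psi,V)\circ(\phi,U)\big)$ produces via one more application of the $w_G$-identity. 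I expect this computation to be the main obstacle: it is the one genuinely coaction-specific step, and one must track the orders of composition, the placement of tensor factors, and the interaction of the $w_G$-identity with naturality of cocycles; the analogous step in \propref{dumb functor} is already lengthy in the action case.

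Finally, essential surjectivity is immediate from classical Landstad duality for coactions (\thmref{ldcn}): an equivariant action $(C,\alpha,\mu)$ admits a normal coaction $(C^{\alpha,\mu},\delta^\mu)$ and a $\what{\delta^\mu}-\alpha$ equivariant isomorphism $\theta\colon C^{\alpha,\mu}\rtimes_{\delta^\mu}G\variso C$ with $\theta\circ j_G=\mu$, i.e.\ an isomorphism $\wcpo(C^{\alpha,\mu},\delta^\mu)\variso(C,\alpha,\mu)$ in $\wacn$, hence in $\waco$. For faithfulness, suppose $(\phi,U),(\rho,V)\colon(A,\delta)\to(B,\epsilon)$ in $\coo$ satisfy $\wcpo(\phi,U)=\wcpo(\rho,V)$. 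Composing with $j_A^\delta$ gives $j_B^\epsilon\circ\phi=j_B^\epsilon\circ\rho$, so $\phi=\rho$ since $j_B^\epsilon$ is injective ($\epsilon$ being normal); tensoring with $\id$ and evaluating on $(j_G^\delta\otimes\id)(w_G)$ gives, via the $w_G$-identity of \propref{copedersen}, $(j_B^\epsilon\otimes\id)(U)\,(j_G^\epsilon\otimes\id)(w_G)=(j_B^\epsilon\otimes\id)(V)\,(j_G^\epsilon\otimes\id)(w_G)$, and cancelling the unitary $(j_G^\epsilon\otimes\id)(w_G)$ and using injectivity of $j_B^\epsilon$ once more yields $U=V$; thus $(\phi,U)=(\rho,V)$. (Fullness, which for actions followed from the full force of \thmref{pedersen}, is not asserted here, and establishing it would require the converse of \propref{copedersen}, which is open.)
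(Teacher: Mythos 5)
Your proposal is correct and follows essentially the same route as the paper: check well-definedness of the morphism map, verify functoriality on the generators $j_A^\delta$ and $(j_G^\delta\otimes\id)(w_G)$ using the $w_G$-identity of \propref{copedersen} and \lemref{cocycle multiply coaction}, get essential surjectivity from the nondegenerate equivalence, and prove faithfulness by composing with $j_A^\delta$ and evaluating on $(j_G^\delta\otimes\id)(w_G)$. The only (harmless) divergence is that you establish membership of $\Phi_U\circ(\phi\rtimes G)$ in $\waco$ by verifying the stronger condition \eqref{first kind} and invoking \remref{weaker}, whereas the paper writes it as a composition of two morphisms each separately seen to lie in $\waco$ and appeals to \lemref{compose}; the paper's own remark following the theorem confirms that your stronger verification also goes through.
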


\begin{proof}
The first thing to check is that 
if $(\phi,U)\:(A,\delta)\to (B,\epsilon)$ is a morphism in $\coo$
then
\[
\wcpo(\phi,U)=\Phi_U\circ(\phi\rtimes G)\:(A\rtimes_\delta G,\what\delta,j_G^\delta)
\to (B\rtimes_\epsilon G,\what\epsilon,j_G^\epsilon)
\]
is a morphism in $\waco$.
Let $\zeta=\ad U\circ\epsilon$.
Then we have a morphism
\[
\varphi\rtimes G\:(A\rtimes_\delta G,\what\delta,j_G^\delta)
\to (B\rtimes_\zeta G,\what\zeta,j_G^\zeta)
\]
in $\wacn$, and hence in $\waco$.
Next, we have a $\what\zeta-\what\epsilon$ equivariant morphism
\[
\Phi_U\:B\rtimes_\zeta G\to B\rtimes_\epsilon G
\]
in $\csn$,
and we have a nondegenerate homomorphism
\[
\Phi_U\circ j_B^\zeta=j_B^\epsilon\:B\to M(B\rtimes_\epsilon G),
\]
and so $\bar{\Phi_U}$ restricts to a nondegenerate homomorphism
from
\[
j_B^\zeta(B)=(B\rtimes_\zeta G)^{\what\zeta,j_G^\zeta}
\]
to
\[
M(j_B^\epsilon(B))=M\bigl((B\rtimes_\epsilon G)^{\what\epsilon,j_G^\epsilon}\bigr).
\]
Thus we have a morphism
\[
\Phi_U\:(B\rtimes_\zeta G,\what\zeta,j_G^\zeta)\to (B\rtimes_\epsilon G,\what\epsilon,j_G^\epsilon)
\]
in $\waco$.
Therefore the composition
\[
\wcpo(\phi,U)=\Phi_U\circ (\varphi\rtimes G)
\]
is a morphism in $\waco$.

Now let $(\psi,V)\:(B,\epsilon)\to (C,\zeta)$ be another morphism in $\coo$.
Then
\begin{align*}
&\wcpo(\psi,V)\circ \wcpo(\phi,U)\circ j_A
=\wcpo\bigl((\psi,V)\circ (\phi,U)\bigr)\circ j_A
\end{align*}
by computations parallel to \propref{dumb functor}.
On the other hand,
%:
\begin{align*}
&\bigl(\wcpo(\psi,V)\circ \wcpo(\phi,U)\circ j_G^\delta\otimes\id\bigr)(w_G)
\\&\quad=\bigl(\Phi_V\circ(\psi\rtimes G)\circ \Phi_U\circ(\phi\rtimes G)\circ j_G^\delta\otimes\id\bigr)(w_G)
\\&\quad=\bigl(\Phi_V\circ(\psi\rtimes G)\otimes\id\bigr)
\bigl((\Phi_U\circ j_G^{\ad U\circ\epsilon}\otimes\id)(w_G)\bigr)
\\&\quad=\bigl(\Phi_V\circ(\psi\rtimes G)\otimes\id\bigr)
\bigl((j_B^\epsilon\otimes\id)(U)(j_G^\epsilon\otimes\id)(w_G)\bigr)
\\&\quad=\bigl(\Phi_V\circ(\psi\rtimes G)\circ j_B^\epsilon\otimes\id\bigr)(U)
\bigl(\Phi_V\circ(\psi\rtimes G)\circ j_G^\epsilon\otimes\id\bigr)(w_G)
\\&\quad=\bigl(\Phi_V\circ j_C^{\ad V\circ\zeta}\circ\psi\otimes\id\bigr)(U)
\bigl(\Phi_V\circ j_G^{\ad V\circ\zeta}\otimes\id\bigr)(w_G)
\\&\quad=(j_C^\zeta\circ\psi\otimes\id)(U)(j_C^\zeta\otimes\id)(V)(j_G^\zeta\otimes\id)(w_G)
\\&\quad=(j_C^\zeta\otimes\id)\bigl((\psi\otimes\id)(U)V\bigr)(j_G^\zeta\otimes\id)(w_G)
\\&\quad=\bigl(\Phi_{(\psi\otimes\id)(U)V}\circ(\psi\circ\phi\rtimes G)\circ j_G^\delta\otimes\id\bigr)(w_G)
\\&\quad=\Bigl(\wcpo\bigl(\psi\circ\phi,(\psi\otimes\id)(U)V\bigr)\circ j_G^\delta\otimes\id\Bigr)(w_G)
\\&\quad=\Bigl(\wcpo\bigl((\psi,V)\circ (\phi,U)\bigr)\circ j_G^\delta\otimes\id\Bigr)(w_G),
\end{align*}
which implies
\[
\wcpo(\psi,V)\circ \wcpo(\phi,U)\circ j_G^\delta
=\wcpo\bigl((\psi,V)\circ (\phi,U)\bigr)\circ j_G^\delta.
\]

For fidelity,
given morphisms
\[
(\phi,U),(\rho,V)\colon (A,\delta)\to (B,\epsilon)
\]
in $\coo$ such that
\[
\wcpo(\phi,U)=\wcpo(\rho,V)\colon 
(A\rtimes_\delta G,\what\delta,j_G^\delta)\to
(B\rtimes_\epsilon G,\what\epsilon,j_G^\epsilon)
\]
in $\waco$,
we have
\begin{align*}
\wcpo(\phi,U)\circ j_A^\delta
&=\Phi_U\circ(\phi\rtimes G)\circ j_A^\delta
\\&=\Phi_U\circ j_B^{\ad U\circ\epsilon}\circ\phi
\\&=j_B^\epsilon\circ \phi,
\end{align*}
and similarly
\[
\wcpo(\rho,V)\circ j_A^\delta=j_B^\epsilon\circ \rho,
\]
so $\phi=\rho$ since $j_B^\epsilon$ is injective by normality of $\epsilon$.

On the other hand,
\begin{align*}
\bigl(\wcpo(\phi,U)\circ j_G^\delta\otimes\id\bigr)(w_G)
&=\bigl(\Phi_U\circ(\phi\rtimes G)\circ j_G^\delta\otimes\id\bigr)(w_G)
\\&=(j_B^\epsilon\otimes\id)(U)(j_G^\epsilon\otimes\id)(w_G),
\end{align*}
and similarly
\[
\bigl(\wcpo(\rho,V)\circ j_G^\delta\otimes\id\bigr)(w_G)
=(j_B^\epsilon\otimes\id)(V)(j_G^\epsilon\otimes\id)(w_G),
\]
so $(j_B^\epsilon\otimes\id)(U)=(j_B^\epsilon\otimes\id)(V)$ since $(j_G^\epsilon\otimes\id)(w_G)$ is unitary.
Since $j_B^\epsilon$ is injective by normality of the coaction $\epsilon$, so is
\[
j_B^\epsilon\otimes\id\colon B\otimes C^*(G)\to M\bigl((B\rtimes_\epsilon G)\otimes C^*(G)\bigr).
\]
Thus $U=V$, and hence $(\phi,U)=(\rho,V)$.
\end{proof}

\begin{rem}
It can be shown by a computation similar to the first part of the proof of \thmref{waco} that if $\phi$ is a morphism in $\aco$ then the morphism $\wcpo(\phi)$ in $\wcoo$ actually satisfies the formally stronger condition \eqref{first kind}. Thus, if
we had a converse of Pedersen's theorem for coactions,
then we would 
be able to (re)define a category
$\waco$
subject to \eqref{first kind},
and get a category equivalence $\coo\sim \waco$ 
dual to the equivalence $\aco\sim \wcoo$ of
\subsecref{outer duality actions}.
Moreover, this 
would give rise to an outer Landstad duality for coactions,
which
would be a good inversion,
since we could show that the forgetful functor
has the required properties using a routine adaption of the argument for outer duality for actions.
\end{rem}

\begin{rem}
In \secref{invert actions} we worked with full crossed products by actions, and consequently the dual coactions were maximal. As we remarked at the end of that section, it is possible to give an alternative development, involving reduced crossed products, in which case the dual coactions would be normal.
In \secref{invert coactions} we chose to work with crossed products by normal coactions, rather than maximal ones,
because this allowed for a development that was quite parallel to the one in \secref{invert actions}.
It is possible to prove analogues of the results of \secref{invert coactions} for maximal coactions,
but this requires substantial modification of the techniques, principally because the generalized fixed-point algebras will then be in a different place. We felt that to present all of this here would distract from the main point, namely the description of a general procedure for ``inverting the process''.
We will give an alternative development in terms of maximal coactions in a forthcoming paper.
\end{rem}

%\bibliographystyle{amsalpha}
%\bibliography{cstar}

\begin{thebibliography}{HKRW11}

\bibitem[BE15]{BusEch2}
A.~Buss and S.~Echterhoff, \emph{Imprimitivity theorems for weakly proper
  actions of locally compact groups}, 
  Ergodic Theory Dynam. Systems \textbf{35} (2015), 2412--2457.

\bibitem[Com84]{combes}
F.~Combes, \emph{Crossed products and Morita equivalence}, Proc. Lond. Math.
  Soc. (3) \textbf{49} (1984), 289--306.

\bibitem[EKQ04]{maximal}
S.~Echterhoff, S.~Kaliszewski, and J.~Quigg, \emph{Maximal coactions},
  Internat. J. Math. \textbf{15} (2004), 47--61.

\bibitem[EKQR00]{taco}
S.~Echterhoff, S.~Kaliszewski, J.~Quigg, and I.~Raeburn, \emph{Naturality and
  induced representations}, Bull. Austral. Math. Soc. \textbf{61} (2000),
  415--438.

\bibitem[EKQR06]{enchilada}
\bysame, \emph{A categorical approach to imprimitivity theorems for
  $C^*$-dynamical systems}, Mem. Amer. Math. Soc.  \textbf{180}, Amer.
  Math. Soc., Providence, RI, 2006.

\bibitem[Fis04]{fischer}
R.~Fischer, \emph{Maximal coactions of quantum groups}, Preprint no. 350, SFB
  478 Geometrische Strukturen in der Mathematik, 2004.

\bibitem[HKRW11]{hkrwnatural}
A.~an Huef, S.~Kaliszewski, I.~Raeburn, and D.~P. Williams, \emph{Naturality of
  Rieffel's Morita equivalence for proper actions}, Algebr. Represent.
  Theory \textbf{14} (2011), 515--543.

\bibitem[IT78]{it}
S.~Imai and H.~Takai, \emph{On a duality for $C^*$-crossed products by a
  locally compact group}, J. Math. Soc. Japan \textbf{30} (1978), 495--504.

\bibitem[KQ07]{ldfull}
S.~Kaliszewski and J.~Quigg, \emph{Landstad's characterization for full crossed
  products}, New York J. Math. \textbf{13} (2007), 1--10.

\bibitem[KQ09]{clda}
\bysame, \emph{Categorical Landstad duality for actions}, Indiana Univ. Math.
  J. \textbf{58} (2009), 415--441.

\bibitem[KQR08]{cldx}
S.~Kaliszewski, J.~Quigg, and I.~Raeburn, \emph{Proper actions, fixed-point
  algebras and naturality in nonabelian duality}, J. Funct. Anal. \textbf{254}
  (2008), 2949--2968.

\bibitem[Lan79]{lan:dual}
M.~B.~Landstad, \emph{Duality theory for covariant systems}, Trans. Amer.
  Math. Soc. \textbf{248} (1979), 223--267.

\bibitem[Ped82]{pedersenexterior}
G.~K.~Pedersen, \emph{Dynamical systems and crossed products}, Operator
  algebras and applications, Part I (Kingston, Ont., 1980), Proc.
  Sympos. Pure Math. \textbf{38}, Amer. Math. Soc., Providence, RI, 1982,
  pp.~271--283.

\bibitem[Qui92]{qlandstad}
J.~Quigg, \emph{Landstad duality for $C^*$-coactions}, Math. Scand.
  \textbf{71} (1992), 277--294.

\bibitem[Qui94]{fullred}
\bysame, \emph{Full and reduced $C^*$-coactions}, Math. Proc. Cambridge
  Philos. Soc. \textbf{116} (1994), 435--450.

\bibitem[QR95]{qrtwisted}
J.~Quigg and I.~Raeburn, \emph{Induced $C^*$-algebras and Landstad duality
  for twisted coactions}, Trans. Amer. Math. Soc. \textbf{347} (1995),
  2885--2915.

\bibitem[Rae87]{rae:full}
I.~Raeburn, \emph{A duality theorem for crossed products by nonabelian
  groups}, Miniconference on harmonic analysis and operator algebras
  (Canberra, 1987), Proc. Centre Math. Appl. Austral. Nat. Univ. \textbf{15},
  Austral. Nat. Univ., Canberra, 1987, pp. 214--227.

\bibitem[RR88]{raeros}
I.~Raeburn and J.~Rosenberg, \emph{Crossed products of continuous-trace
  $C^*$-algebras by smooth actions}, Trans. Amer. Math. Soc. \textbf{305}
  (1988), 1--45.

\bibitem[Tak75]{takai}
H.~Takai, \emph{On a duality for crossed products of $C^*$-algebras}, J.
  Funct. Anal. \textbf{19} (1975), 25--39.

\end{thebibliography}

\providecommand{\bysame}{\leavevmode\hbox to3em{\hrulefill}\thinspace}
\providecommand{\MR}{\relax\ifhmode\unskip\space\fi MR }
% \MRhref is called by the amsart/book/proc definition of \MR.
\providecommand{\MRhref}[2]{%
  \href{http://www.ams.org/mathscinet-getitem?mr=#1}{#2}
}
\providecommand{\href}[2]{#2}

\end{document}